\documentclass[11pt,reqno]{amsart}

\usepackage{amscd,amsmath,amssymb,amsthm,amsfonts,verbatim,graphicx,enumitem,cancel,latexsym,mathrsfs}
\usepackage[colorlinks]{hyperref}

\usepackage{tikz-cd,tikz,pgfplots}
\usetikzlibrary{positioning}
\pgfplotsset{compat=1.18}
\usetikzlibrary{decorations.pathreplacing}

\newcommand{\C}{\mathbb{C}}
\newcommand{\R}{\mathbb{R}}

\renewcommand{\epsilon}{\varepsilon}
\renewcommand{\leq}{\leqslant}
\renewcommand{\geq}{\geqslant}

\theoremstyle{plain}
\newtheorem{theorem}{Theorem}[section]
\newtheorem{lemma}[theorem]{Lemma}
\newtheorem{corollary}[theorem]{Corollary}
\newtheorem{proposition}[theorem]{Proposition}

\theoremstyle{definition}
\newtheorem{definition}[theorem]{Definition}
\newtheorem{example}[theorem]{Example}
\newtheorem{remark}[theorem]{Remark}

\numberwithin{equation}{section}
\begin{document}
	
\title{Convex functions with symplectic Hessian}
	
\begin{abstract}
We prove a third-order derivative estimate for convex solutions to the real Monge-Ampère equation ${\rm det}\,{\rm Hess}(u) = 1$ on an open set in $\mathbb{R}^{2m}$ under the additional assumption that ${\rm Hess}(u)$ lies in ${\rm Sp}(2m,\mathbb{R})$ at every point. Our method is a geometric interpretation and extension to higher dimensions of Nitsche's classical proof of the Bernstein theorem for the real Monge-Ampère equation on $\mathbb{R}^2$. For $m = 1$ we also improve Nitsche's constant as well as some estimates due to Calabi, and we construct examples of solutions with interesting geometric behavior.
\end{abstract}
	
\author{José Rafael Santiago Arellano}
\address{Mathematisches Institut, Universit\"at M\"unster, 48149 M\"unster, Germany}
\email{\href{mailto:joser.s@uni-muenster.de}{joser.s@uni-muenster.de}, \href{mailto:joserafaelsar@gmail.com}{joserafaelsar@gmail.com}}

\date{\today}
	
\maketitle
	
\markboth{Convex functions with symplectic Hessian}{José Rafael Santiago Arellano}

\section{Introduction}

Our main result in this paper is

\begin{theorem}\label{thm:main}
Let $u: U \to \mathbb{R}$ be a smooth function on an open set $U \subset \mathbb{R}^{2m}$ such that ${\rm Hess}(u)$ is positive definite and in ${\rm Sp}(2m,\mathbb{R})$ everywhere. Then the scalar curvature $R$ of the associated Hessian metric $g$ on $U$, i.e. $g|_x = {\rm Hess}(u)|_x$ for all $x \in U$, satisfies $R \geq 0$ and
\begin{equation}\label{eq:main_est}\sqrt{R(x)} \leq \frac{3m}{{\rm dist}_{g|_x}(x,\partial U)}\end{equation}
for all $x \in U$. In particular, if $U = \mathbb{R}^{2m}$, then $R \equiv 0$ and consequently $u$ is a quadratic polynomial. The constant $3m$ may not be optimal but cannot be made smaller than $\sqrt{m}$.
\end{theorem}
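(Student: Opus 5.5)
The approach is to reinterpret the symplectic condition geometrically and then run a Bochner/maximum-principle argument with a distance cutoff, following Calabi's approach to Pogorelov-type estimates.

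\emph{Step 1: the structure.} If $H={\rm Hess}(u)$ is symmetric, positive definite and lies in ${\rm Sp}(2m,\mathbb{R})$, then $H^T J H = J$ with $J$ the standard complex structure. Hence $I := J^{-1}H$, or an equivalent normalization such as $H^{-1}J$, squares to $-1$ and is $g$-orthogonal. So $(g,I,\omega_0)$, with $\omega_0$ the constant symplectic form, is an almost Kähler structure on $U$. The map $x\mapsto \nabla u(x)$ is a symplectomorphism-like Lagrangian graph: the graph of $\nabla u$ in $\mathbb{R}^{2m}\times\mathbb{R}^{2m}$ is both special Lagrangian (since $\det H=1$) and, by the symplectic condition, Lagrangian for a second symplectic form. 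I would identify $g$ with the induced metric of this graph in a pseudo-Euclidean or hyperkähler-type ambient space, with the scalings chosen so that $g$ is the induced metric. Since the ambient space is flat, the Gauss equation gives $R=|H|^2-|A|^2$ where $A$ is the second fundamental form. I expect the mean curvature to vanish, so the extra structure should give $R=c|A|^2\geq 0$, or equivalently $R=c\,|\nabla^g H|^2$ expressed through third derivatives $u_{ijk}$. This is where $R\ge0$ comes from. Concretely, I would compute $R$ for the Hessian metric via the standard formula $R=\tfrac14\big(|u_{ijk}|^2 - |\nabla\log\det H|^2\big)$ (up to sign conventions), where $\det H=1$ kills the second term. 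The symplectic condition enters only later.

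\emph{Step 2: a Bochner inequality.} This is the Nitsche/Calabi part. I would compute $\Delta_g R$ using minimality of the graph, or equivalently Calabi's third-order computation for Monge--Ampère. The symplectic/almost-Kähler structure should make the Gauss map (into a symmetric space of $Sp$-type, or a Grassmannian) harmonic with target of nonpositive curvature. I aim for an inequality of the form $\Delta_g \sqrt R \ \ge\ c_m R^{3/2}$, or equivalently $\Delta \log R \ge c R$ away from zeros, with explicit $c_m$ such as $1/(m\cdot\text{const})$. For $m=1$ this reproduces Nitsche's argument, since there the metric is conformally flat and $\log R$ satisfies a Liouville-type inequality.

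\emph{Step 3: the cutoff argument.} Fix $x_0$ and let $a={\rm dist}_g(x_0,\partial U)$. Set $\rho$ to be the $g$-distance to $x_0$ and consider $F=(a^2-\rho^2)^2 R$ on the geodesic ball. At an interior maximum, use $\nabla F=0$, $\Delta F\le 0$, and the Laplacian comparison for $\rho^2$. This requires a Ricci lower bound, which is problematic, so I would instead use the sub-solution from Step 2 in the form $\Delta \log R\ge cR$ together with Calabi's trick: compare with the model $\frac{n}{c}\cdot\frac{4a^2}{(a^2-\rho^2)^2}$-type barrier, handling $\rho$ via Ricci bounded below by $-CR$, which is itself controlled by $R$. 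The outcome is $R(x_0)\le C_m^2/a^2$, and I would track constants to get $3m$.

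\emph{Step 4: conclusions.} If $U=\mathbb{R}^{2m}$, then completeness of $g$ must be checked: completeness is not automatic for the Euclidean domain. Calabi/Pogorelov show that Hessian metrics of entire solutions are complete, or one uses $\mathrm{dist}_g(x,\partial U)=\infty$ interpreted with the Cheng--Yau argument. Either way, letting $a\to\infty$ gives $R\equiv0$, hence $u_{ijk}\equiv0$ and $u$ is quadratic. For the lower bound $\sqrt m$, I would take products of the explicit $m=1$ example that is sharp or nearly sharp, e.g. a radial solution on a disc or half-plane. Scalar curvature adds over the product while distance to the boundary can be arranged to scale like $\sqrt m$ relative to each factor, giving the ratio $\sqrt m$.

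\emph{Main obstacle.} The main obstacle is Step 2: deriving a clean Bochner inequality with an explicit constant from the symplectic condition. A secondary issue is the Laplacian comparison in Step 3 without an a priori Ricci bound. I would try to absorb that using $|{\rm Ric}|\le C R$, which holds since the curvature is quadratic in $u_{ijk}$ and $R$ controls $|u_{ijk}|^2$.
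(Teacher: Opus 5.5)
\textbf{There is a genuine gap: your argument bounds $R$ by the wrong distance.}

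In the statement, ${\rm dist}_{g|_x}(x,\partial U)$ is Calabi's \emph{affine} distance. It is the distance to $\partial U$ measured in the \emph{constant} metric $g|_x$ frozen at $x$, so after normalizing ${\rm Hess}(u)(x)=I_{2m}$ it is just the Euclidean distance. In Step 3 you instead take $a={\rm dist}_g(x_0,\partial U)$, the intrinsic Riemannian distance of the Hessian metric.

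At best, Steps 2--3 give $\sqrt{R(x)}\le C_n/{\rm dist}_g(x,\partial U)$. Comparison geometry is not the obstacle here, since ${\rm Ric}_g\ge 0$ by Lemma~\ref{SCT}. But this is Calabi's estimate, Theorem~\ref{T11}, which holds for every convex solution of $\det{\rm Hess}(u)=1$ in every dimension, symplectic or not.

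Passing from it to the affine distance requires a lower bound for ${\rm dist}_g(x,\partial U)$ in terms of ${\rm dist}_{g|_x}(x,\partial U)$. This is the hard part of Calabi's work (Levinson's lemma), and by that method it is only known for $n\le 5$. For $U=\mathbb{R}^{2m}$ it is exactly the completeness of $g$. Citing Calabi/Pogorelov for completeness in Step 4 amounts, when $2m\ge 6$, to citing the Bernstein theorem itself, and it gives nothing quantitative on bounded domains.

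Tellingly, the symplectic hypothesis does no work anywhere in your plan. The almost-K\"ahler structure of Step 1 is never used, and nothing in Steps 2--4 could produce the constant $3m$.

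\textbf{How the paper avoids this.} The paper uses ${\rm Hess}(u)\in{\rm Sp}(2m,\mathbb{R})$ precisely to avoid any intrinsic distance.
\begin{itemize}
\item The partial Legendre transform of the K\"ahler potential $4u\circ{\rm Re}$ of the semi-flat hyper-K\"ahler metric on $U+\sqrt{-1}\mathbb{R}^{2m}$ produces a holomorphic map $Z=\Gamma+\sqrt{-1}\Phi$ into the Siegel upper half-space.
\item In the Lewy--Nitsche coordinates $\sigma=(x+u_x)+\sqrt{-1}(y+u_y)$, the map $f=(x-u_x)-\sqrt{-1}(y-u_y)$ is holomorphic. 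Its derivative $\partial f/\partial\sigma=\mathscr{C}(Z)$ takes values in the bounded Siegel disk.
\item Since $\sigma$ and $f$ are built directly from $x$ and $\nabla u$, everything stays Euclidean. A Schwarz-lemma argument (Lemma~\ref{3NN}) gives $\delta(O)\le 3R/4$, where $R$ is the Euclidean inradius of $T_3(B_{\delta(O)}(O))$.
\item Separately, $R_g(O)=16\sum_k\|\partial_{\sigma_k}\mathscr{C}(Z)(O)\|_F^2\le 16m^2/R^2$, by Cauchy's inequality and $\|\mathscr{C}(Z)\|_F<\sqrt m$.
\end{itemize}
Combining these gives $\sqrt{R_g(O)}\le 3m/\delta(O)$. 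The case $U=\mathbb{R}^{2m}$ is then immediate because $\delta\equiv\infty$, with no completeness question. For $m=1$ this is Nitsche's complex-analytic argument, not a Bochner-type argument.

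\textbf{Sharpness claim.} Your sharpness reasoning also needs repair. You need a concrete $m=1$ solution with $\sqrt{R_g}\,\delta=1$, for example $u=x^2/(2y)+y^3/6$ at $(0,1)$. In its $m$-fold product it is $\sqrt{R_g}$ that grows by $\sqrt m$, while the affine distance to $\partial(\mathfrak{H}\times\cdots\times\mathfrak{H})$ stays equal to $1$. The distance does not scale.
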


Calabi \cite{EC} famously proved such a result for $U \subset \mathbb{R}^n$, $n =2,3,4,5$, and ${\rm Hess}(u)$ positive definite and in ${\rm SL}(n,\mathbb{R})$ everywhere, with the constant $3m$ replaced by $M_n <  127, 2660, 40100, 653000$, respectively. This can be viewed as a $C^3$ a priori estimate for solutions to the real Monge-Ampère equation because (due to some cancellations) $\sqrt{R(x)}$ is then actually a norm of the third derivative of $u$ at $x$. Calabi also pointed out that for $n = 2$, Nitsche's version \cite{Nit} of Jörgens's proof \cite{Jor} of the Bernstein theorem for the Monge-Ampère equation on $\mathbb{R}^{2}$ via complex analysis yields $M_2 \leq 4$. We revisit Nitsche's proof, improve the constant from $4$ to $3$, and generalize it under the symplectic Hessian condition, which is equivalent to being unimodular for $m = 1$ but is of course much stronger for $m > 1$.

Our method is to consider the Kähler metric with  potential $u \circ {\rm Re}$ on the domain $U + i\mathbb{R}^{2m} \subset \mathbb{C}^{2m}$. This is Ricci-flat if ${\rm Hess}(u) \in {\rm SL}(2m,\mathbb{R})$ and it is hyper-Kähler if ${\rm Hess}(u) \in {\rm Sp}(2m,\mathbb{R})$. Then we use the Pedersen-Poon ansatz \cite{PP} to express $u$ in terms of a holomorphic map from a domain in $\mathbb{C}^m$ to the Siegel upper half-plane in $\mathbb{C}^{m \times m}$. The existence of such a representation follows indirectly from \cite{ACD,BC,VC,Freed,Hit1,Hit2}. However, we need very explicit formulas, especially with regard to the domains of the functions considered, which we have been unable to find in the literature. In Section \ref{sec:general} we develop such formulas and then deduce Theorem \ref{thm:main} using Cauchy's inequality from complex analysis.

In Section \ref{sec:special} we construct examples and discuss some more special consequences, mainly for $m = 1$. It turns out that our geometric approach also provides some new insights into Calabi's proof in this case, e.g. improved numerical constants and a hint as to how this proof may perhaps be extended to dimensions $n \geq 6$. (Recall that Calabi's theorem was in fact extended to all dimensions by Pogorelov \cite{Pogo}, but Pogorelov's method is different and it is an open problem to make Calabi's original method work in higher dimensions.) Our main results in Section \ref{sec:special} are as follows:

\begin{theorem}\label{thm:main:3_not_sharp}
    For $m=1$, there exists no example realizing the bound $M_2 \leq 3$ of Theorem \ref{thm:main}.
\end{theorem}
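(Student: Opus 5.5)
We trace the proof of Theorem \ref{thm:main} for $m=1$ and show that every inequality in it can be an equality only under conditions that contradict one another. Here $U\subset\mathbb{R}^2$ and $\mathrm{Hess}(u)\in\mathrm{SL}(2,\mathbb{R})$. The Pedersen--Poon/Jörgens representation from Section \ref{sec:general} writes $u$ in terms of a holomorphic map $\tau\colon \Omega\to\mathbb{H}$, where $\Omega\subset\mathbb{C}$ is the image of $U$ under the holomorphic coordinate $z$ (essentially $z = x + \nabla u$ in complex notation). In these coordinates we write the metric $g$ and the scalar curvature $R$ at $x$ as explicit functions of $\tau(z)$, $\tau'(z)$ and the conformal factor relating $g$ to $|dz|^2$. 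The constant $3$ then arises by chaining three inequalities.

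\begin{enumerate}
\item A Cauchy/Schwarz--Pick inequality for the holomorphic map into $\mathbb{H}$ (or for a normalized derivative) on a Euclidean disc in $\Omega$ of radius $\rho$ centered at $z(x)$.
\item A comparison between the Euclidean radius $\rho$ of the largest disc in $\Omega$ and the $g$-distance ${\rm dist}_{g}(x,\partial U)$, obtained by integrating the conformal factor along rays.
\item An elementary inequality, something like $(1+t)^2\le 3\cdot(\cdots)$ or $\sqrt{a}+\sqrt{b}\le\ldots$, that combines the two. This step is what produces the numerical constant $3$.
\end{enumerate}

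Suppose an example realizes $\sqrt{R(x_0)}\,{\rm dist}_{g}(x_0,\partial U)=3$ at some point $x_0$. Then every inequality in the chain must be an equality at $x_0$.

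\begin{itemize}
\item Equality in the Schwarz--Pick/Cauchy step forces the relevant holomorphic function on the disc to be extremal. It must be a Möbius map (a disc automorphism composed with the Cayley map), or a monomial $c(z-z_0)^k$ in the Cauchy case. Either way it has an explicit closed form.
\item Equality in the distance comparison forces the disc to actually touch $\partial\Omega$. It also forces the $g$-geodesic realizing the distance to be the Euclidean radial segment, with the conformal factor attaining its extremal value along the whole segment.
\item Equality in the elementary inequality fixes a specific value of the ratio involved, for instance $|\tau'|\,\rho/\operatorname{Im}\tau$.
\end{itemize}

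We then substitute the explicit extremal $\tau$ back into the formula for the conformal factor. The aim is to show that along the radial segment the factor is strictly non-constant, or strictly smaller than required, so that the distance inequality is strict. This contradicts the second equality condition.

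An alternative route to the contradiction is to use the extremal function itself. The extremal Möbius $\tau$ extends past the disc, so the domain $U$ it determines strictly contains the $g$-ball. Hence the $g$-distance to $\partial U$ cannot equal the Euclidean-derived bound unless $\partial U$ is placed exactly there. But equality in the first step already fixes $\Omega$ as the disc, and one then checks that the induced $g$-distance to $\partial U$ is strictly larger than the value the chain needs, for instance because the conformal factor vanishes or blows up only at isolated boundary points.

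The main obstacle is step 2's equality analysis. The three inequalities are individually sharp, but in different configurations, and we must verify by explicit computation that the configurations are incompatible. Concretely, we will compute $\sqrt{R}\cdot{\rm dist}_g$ for the one-parameter extremal family coming from step 1 and show that its supremum is strictly less than $3$, or is approached but not attained as the parameter degenerates. We expect the supremum to be a limit as the parameter tends to infinity, which shows directly that $3$ is not attained.
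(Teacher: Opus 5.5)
Your overall strategy is the one the paper uses: equality in $\sqrt{R_g}\,\delta\leq 3$ forces equality at every step, and the equality cases turn out to be incompatible. However, the chain of inequalities you plan to analyze is not the one that produces the constant $3$, so the plan does not get off the ground.

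The distance in Theorem \ref{thm:main} is the affine distance $\delta(x)={\rm dist}_{g|_x}(x,\partial U)$ in the frozen metric ${\rm Hess}(u)|_x$. After the normalization \eqref{symplectic} it is just the Euclidean inradius of $U$ at $O$, not the Riemannian distance ${\rm dist}_g$. (For ${\rm dist}_g$ the statement would be a different one, and it would follow at once from Corollary \ref{cor:cal:improved:c}, since $2\sqrt{2/3}<3$.) So there is no conformal factor to integrate along rays, and your steps 2 and 3, with their equality conditions, do not correspond to anything in the proof.

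The constant is $3=4\cdot\frac{3}{4}$:
\begin{itemize}
\item By \eqref{f''}, $\sqrt{R_g(O)}=4|f''(O)|$.
\item The Schwarz lemma for $f'$ (with $f'(O)=0$ and $|f'|<1$) on the largest disc $B_R(O)\subset T_3(B_{\delta(O)}(O))$ gives $|f''(O)|\leq 1/R$.
\item Lemma \ref{3NN} gives $\delta(O)\leq 3R/4$. This comes from $|f(\sigma)|\leq|\sigma|^2/(2R)$ together with $T_3^{-1}(\sigma)=\frac{1}{2}(\sigma+\overline{f(\sigma)})$.
\end{itemize}

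The decisive step is then missing, and your ``alternative route'' gets it backwards. Equality forces $|f''(O)|=1/R$ and $\delta(O)=3R/4$.
\begin{itemize}
\item The Schwarz equality case is rigid: $f'(\sigma)=\sigma/R$ up to rotation. So there is no one-parameter extremal family whose supremum you could compute, and the scale $R$ drops out of the scale-invariant quantity.
\item Your remark that equality ``fixes $\Omega$ as the disc'' is correct, but it contradicts your claim that the extremal $\tau$ extends past the disc. In fact $\mathscr{C}^{-1}(\sigma/R)$ already maps $B_R(O)$ onto the whole upper half-plane. Since $|f'|<1$ must hold on all of $T_3(U)$, one gets $T_3(U)=B_R(O)$ exactly.
\item Then $f(\sigma)=\sigma^2/(2R)$ determines $U=T_3^{-1}(B_R(O))$ explicitly. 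It is the deltoid of Example \ref{ex:extremal:schwarz} with $R_0=R$, whose boundary points satisfy $|x+iy|=\frac{R}{4}\sqrt{5+4\cos 3\theta}$.
\item Hence $\delta(O)=R/4$, strictly \emph{smaller}, not larger, than the required $3R/4$. Indeed $\sqrt{R_g(O)}\,\delta(O)=1$ for this configuration.
\end{itemize}
This contradiction is the whole proof, and your proposal does not reach it.
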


\begin{theorem}\label{main:curv_diffeq}
Let $u: U \to \mathbb{R}$ be a smooth function on an open set in $\mathbb{R}^{2}$ such that ${\rm Hess}(u)$ is positive definite and ${\rm det}\,{\rm Hess}(u) = 1$. Let $g$ denote the associated Hessian metric on $U$ with Laplace-Beltrami operator $\Delta$ and scalar curvature $R \geq 0$.   Then, for all $\alpha >0$,
$$\Delta R^{\alpha}\geq 3\alpha R^{\alpha+1}$$
holds in the barrier sense on $U$ and equality can be attained at a point where $R > 0$.
\end{theorem}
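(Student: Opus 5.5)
The plan is to pass to Nitsche's (Lewy-transform) holomorphic description of $u$ for $m=1$, in which $g$ becomes conformally flat with an explicit conformal factor. The inequality then reduces to a one-line computation with $\partial\bar\partial\log$, where the harmonicity of $\log|h'|^2$ is used.

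\emph{Step 1 (conformal form of $g$).} Put $H={\rm Hess}(u)$ and $t={\rm tr}\,H\ge 2$. Cayley--Hamilton with $\det H=1$ gives $H^2=tH-I$. Consider the Lewy map $\zeta=\xi+i\eta:=(x+u_x)+i(y+u_y)$ and the conjugate map $(x-u_x)-i(y-u_y)=:F(\zeta)$.
\begin{itemize}
\item Since $I+H$ is symmetric positive definite, the Lewy map is an immersion. It is also injective by monotonicity of $\nabla u$, so $\zeta$ is a global coordinate on $U$.
\item The pullback of the Euclidean metric is $(I+H)^2=(2+t)H$. Hence
$$g=\frac{|d\zeta|^2}{2+t}.$$
\item The differential of $x-\nabla u$ is $I-H$. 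As in Nitsche's argument, $F$ is holomorphic in $\zeta$, and $h:=F'$ satisfies $|h|^2=(t-2)/(t+2)<1$.
\end{itemize}
Consequently
$$g=\lambda|d\zeta|^2,\qquad \lambda=\tfrac14\bigl(1-|h|^2\bigr).$$
I would cite the formulas of Section \ref{sec:general} for $m=1$ here rather than rederive them. I would also use from there the converse: every holomorphic $h$ with $|h|<1$ on a simply connected domain arises from some such $u$.

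\emph{Step 2 (curvature).} Use $K=-\frac{2}{\lambda}\partial\bar\partial\log\lambda$ together with
$$\partial\bar\partial\log(1-|h|^2)=-\frac{|h'|^2}{(1-|h|^2)^2}.$$
This gives
$$R=2K=\frac{16\,|h'|^2}{(1-|h|^2)^3}\ \ge 0.$$
On the open set where $h'\neq0$, the function $\log|h'|^2$ is harmonic, so
$$\partial\bar\partial\log R=\frac{3|h'|^2}{(1-|h|^2)^2}.$$
Then
$$\partial\bar\partial R^\alpha=R^\alpha\bigl(\alpha\,\partial\bar\partial\log R+\alpha^2|\partial\log R|^2\bigr)\ \ge\ 3\alpha R^\alpha\frac{|h'|^2}{(1-|h|^2)^2}.$$
Applying $\Delta_g=\lambda^{-1}\cdot4\partial\bar\partial=\frac{16}{1-|h|^2}\partial\bar\partial$ yields exactly
$$\Delta_g R^\alpha\ge 3\alpha R^{\alpha+1},$$
with equality if and only if $\partial\log R=0$, i.e.
$$\frac{h''}{h'}+\frac{3\bar h h'}{1-|h|^2}=0.$$

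\emph{Step 3 (zeros of $h'$ and equality).} The set $\{h'=0\}$ is where $R=0$ (unless $h'\equiv0$, in which case $R\equiv0$ and the claim is trivial); it is discrete, and there $R^\alpha$ need not be $C^2$. At such a point $p$ we have $R^\alpha\ge0=R^\alpha(p)$. So the constant function $0$ is a lower barrier with $\Delta_g 0=0=3\alpha R(p)^{\alpha+1}$, and the inequality holds in the barrier sense. For equality, take $h(\zeta)=\zeta/2$ near $\zeta=0$. Then $h(0)=0$, $h'(0)\neq0$ and $h''(0)=0$, so the equality condition holds at $0$ and $R(0)=4>0$. The corresponding $u$ exists by the converse in Step 1.

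I expect the main obstacle to be Step 1: verifying carefully that the conformal factor is $(1-|h|^2)/4$ with the correct normalization, and that the representation is valid on all of $U$ (global injectivity of the Lewy map and realizability of an arbitrary $h$). Steps 2 and 3 are then routine.
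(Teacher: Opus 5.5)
Your proof is correct, but it reaches the result by a different computation than the paper's.

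\textbf{The paper's route.} The paper works in the partial-Legendre coordinate $\alpha=\tilde x+\sqrt{-1}\tilde y$. There $g=\Phi\,|d\alpha|^2$ with $\Phi>0$ harmonic, and $R=|\nabla\Phi|^2/\Phi^3$ (Lemma \ref{GC}). It then expands the Euclidean Laplacian of $R^\alpha=|\nabla\Phi|^{2\alpha}\Phi^{-3\alpha}$ term by term. Finally it recognizes the coefficient of $\alpha^2$ as an explicit sum of squares $N$.

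\textbf{Your route.} You work in the Lewy--Nitsche coordinate $\sigma$ (your $\zeta$). Your Cayley--Hamilton computation correctly gives $g=\frac14(1-|F'|^2)|d\zeta|^2$. The paper writes this formula down only in Example \ref{ex:schwarz:2}. You then organize everything around $\log R$:
\begin{itemize}
\item Harmonicity of $\log|h'|^2$ (your $h=F'$) gives the single identity $\Delta_g\log R=3R$ on $\{R>0\}$.
\item The identity $\Delta_g R^\alpha=\alpha R^\alpha\Delta_g\log R+\alpha^2R^\alpha|d\log R|_g^2$ then finishes the inequality.
\end{itemize}

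\textbf{What your route buys.} It is shorter and more structural. It shows that the paper's $N$ is exactly $|d\log R|_g^2/R$. The same trick works verbatim in the paper's coordinate, since there $R=|Z'|^2/({\rm Im}\,Z)^3$. It also identifies the equality set inside $\{R>0\}$ as precisely the critical points of $R$. This matches the paper's Example \ref{n=2}, where $(1/2,1/2)$ is a critical point of $4(\tilde x^2+\tilde y^2)/(1+2\tilde x\tilde y)^3$.

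\textbf{Your equality example.} The choice $h(\zeta)=\zeta/2$ is the paper's extremal Schwarz example (Example \ref{ex:extremal:schwarz}) with $R_0=2$, evaluated at its center. Its realizability is therefore covered there. Section \ref{2.5} states the converse in the $\alpha$-coordinate rather than in $\sigma$, but it is also immediate directly. Set $x+\sqrt{-1}y=(\zeta+\overline F)/2$ and $u_x+\sqrt{-1}u_y=(\zeta-\overline F)/2$. This gives ${\rm Hess}(u)=(I-S)(I+S)^{-1}$, where $S$ is symmetric, traceless and of norm $|h|<1$. Hence the Hessian is symmetric positive definite with determinant $1$, and a local potential $u$ exists.

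\textbf{One small correction.} Monotonicity of $\nabla u$ yields injectivity of the Lewy map only on convex subsets of $U$ (compare Lemma \ref{DT}). So you should not claim that $\zeta$ is a global coordinate on an arbitrary open $U$. Nothing depends on this, because the inequality is pointwise: work on a small disk around each point.
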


This inequality was proved by Calabi \cite{EC} for $\alpha = \frac{1}{2}$. In \cite[p. 114]{EC} Calabi also claimed that equality cannot be attained at a point where $R > 0$ but we were unable to reproduce his argument. We show that Calabi's proof of the inequality generalizes to all exponents $\alpha \geq \frac{1}{2}$. For $\alpha < \frac{1}{2}$ a new argument based on the holomorphic representation of $u$ is needed. As a consequence of being able to let $\alpha \to 0$ in the inequality, we can improve a curvature estimate from \cite{EC} in dimension $2$:

\begin{theorem}\label{thm:improved:calabi}
There exists a constant $c \leq 1.16$ with the following property. In Theorem \ref{main:curv_diffeq}, let $C(s)$, $s \in [0,\gamma)$, be a minimal unit-speed geodesic with respect to $g$ in $U$. Then
\begin{equation}\label{eq:glob:curv:geo}\sqrt{{\rm Ric}(C'(s),C'(s))} \leq \frac{c}{\gamma-s}\end{equation}
holds for all $s \in [0,\gamma)$. Moreover, $c$ cannot be made smaller than $1.11$.
\end{theorem}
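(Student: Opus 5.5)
The plan is to combine the differential inequality of Theorem \ref{main:curv_diffeq}, in the limit $\alpha \to 0$, with a one-dimensional comparison argument along the geodesic $C$. In dimension $2$ the Ricci tensor is $\frac{R}{2}g$, so $\sqrt{{\rm Ric}(C',C')} = \sqrt{R(C(s))/2}$. Theorem \ref{main:curv_diffeq} gives $\Delta R^\alpha \geq 3\alpha R^{\alpha+1}$ for every $\alpha>0$. Dividing by $\alpha$ and letting $\alpha\to 0$ where $R>0$ yields
\[
\Delta \log R \geq 3R,
\]
in the barrier sense. Set $w = \log R$, so that $\Delta w \geq 3e^{w}$; this is a Liouville-type subsolution inequality on a surface of nonnegative curvature.

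Next I reduce to an ODE along $C$. I would build an upper barrier of the form $\phi(x) = \log\frac{a}{(\gamma - \rho(x))^2}$, where $\rho$ is a distance-type function adapted to $C$. The simplest choice is $\rho(x) = \gamma - d(x, C(\gamma))$ when the endpoint is interior; otherwise I would work on the geodesic ball of radius $\gamma - s$ around $C(s)$, which lies in $U$ by minimality. On a ball $B_r(p)$, the candidate supersolution is $\phi = \log\frac{a r^2}{(r^2-d^2)^2}$ with $d = d(p,\cdot)$. Computing $\Delta\phi$ requires the Laplacian comparison $\Delta d \leq 1/d$, which holds because $K = R/2 \geq 0$. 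With this one checks $\Delta \phi \leq 3e^{\phi}$ for a suitable constant $a$, exactly as in the Euclidean hyperbolic-metric barrier. The maximum principle then applies to $w - \phi$: $\phi \to \infty$ at the boundary and $\Delta(w-\phi) \geq 3(e^w - e^\phi)$, so $w \leq \phi$. Evaluating at the centre gives $R(p) \leq a/r^2$, and hence $\sqrt{R/2} \leq \sqrt{a/2}/(\gamma-s)$.

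A naive constant from this barrier would be $\sqrt{4/3}\cdot$(factor), roughly $1.15$. The more precise value $c\leq 1.16$ suggests a refined radial barrier: the optimal radial solution of $\phi'' + \phi'/r = 3e^\phi$ (Liouville) gives $a = 8/3$, i.e. $\sqrt{a/2} = \sqrt{4/3}\approx 1.1547 \leq 1.16$. I would use the exact Liouville radial solution $e^\phi = \frac{8r^2}{3(r^2-d^2)^2}$. The singularity at cut points is handled by Calabi's trick, since $d$ is only a barrier function there.

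For the lower bound $1.11$, I would exhibit an explicit example from Section \ref{sec:special}, presumably a rotationally or translationally symmetric solution built from a holomorphic function into the upper half-plane, for instance $\tau(z)$ with a pole at the boundary. Along a geodesic running into the boundary, I would compute $\sqrt{R/2}\,(\gamma-s)$ and show that its supremum exceeds $1.11$, evaluating the limit numerically.

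The main obstacle is the rigorous passage $\alpha\to0$ together with the barrier argument. $R$ may vanish, so $\log R = -\infty$ there, which is harmless for a subsolution but needs care. Moreover, the Laplacian comparison must be handled in the barrier sense past the cut locus.
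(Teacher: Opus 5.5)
\textbf{Upper bound: correct, by a different route.} The paper stays inside Calabi's power-type framework. It uses $\Delta R^{\beta/2}\ge \frac{3\beta}{2}R^{\beta/2+1}$, now valid for all $\beta>0$ by Theorem \ref{alpha>0}. It compares with the Emden--Fowler ODE \eqref{ODE} through Theorem \ref{T11}. It then needs the sub/supersolution bounds of Lemma \ref{2beta} to show $s_{\max}(2,\beta)/\sqrt{\beta}\to 2$ as $\beta\to 0$. This gives Corollary \ref{cor:cal:improved:c}, i.e. $R\le \frac{8}{3}\,{\rm dist}_g(\cdot,\partial U)^{-2}$, and hence $c=2/\sqrt3$.

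You instead pass to the limit at the level of the PDE, which is elementary on $\{R>0\}$ because $\alpha^{-1}R^{-\alpha}\Delta R^{\alpha}=\Delta\log R+\alpha|\nabla \log R|^2$. In fact, the formula in Theorem \ref{alpha>0} then gives $\Delta \log R=3R$ exactly. So $Rg$ has curvature $-1$: it is the pullback by $Z$ of $|dZ|^2/({\rm Im}\,Z)^2$ on the upper half-plane, and your argument is an Ahlfors--Schwarz lemma.

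Your barrier checks out. With $\phi=\log\frac{ar^2}{(r^2-d^2)^2}$ you have $\phi'\ge 0$ and $\Delta\phi\le \phi''+\phi'/d=\frac{8r^2}{(r^2-d^2)^2}$. So $a=8/3$ works and gives exactly the paper's bound $\sqrt{R/2}\le \frac{2/\sqrt3}{\gamma-s}$. Zeros of $R$ are harmless, since the maximum of $w-\phi$ is attained where $R>0$. One detail: work on balls of radius $r'<\gamma-s$ and let $r'\to\gamma-s$. This uses that $C$ realizes ${\rm dist}_g(C(0),\partial U)$, so that ${\rm dist}_g(C(s),\partial U)=\gamma-s$. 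Your route gives a self-contained proof with an explicit barrier and no ODE blow-up estimates. The paper's route keeps the uniform framework of Theorem \ref{T11}, which applies in every dimension.

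\textbf{Lower bound: genuine gap.} The claim that $c$ cannot be smaller than $1.11$ is not proved: you give no example. Moreover, the mechanism you suggest, evaluating $(\gamma-s)\sqrt{R/2}$ in the limit along a geodesic running into the boundary, cannot produce $1.11$. By Proposition \ref{prop:levin:mod}, $\liminf_{s\to\gamma}(\gamma-s)\sqrt{{\rm Ric}(C'(s),C'(s))}\le \frac12$ for every minimal geodesic on a surface. Translationally symmetric examples such as Bryant's (Example \ref{ex:ric:ma}) give only the constant ratio $\sqrt2/3\approx 0.47$.

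The extremal value must be found at an interior point. The paper uses the extremal Schwarz example (Examples \ref{ex:extremal:schwarz} and \ref{ex:schwarz:2}), built from $f(\sigma)=\sigma^2/(2R_0)$, the equality case of the Schwarz lemma. In the $\sigma$-coordinate, $g=\frac{R_0^2-|\sigma|^2}{4R_0^2}|d\sigma|^2$ on $|\sigma|<R_0$. By rotational symmetry, the radii are minimal geodesics from $O$ to the boundary, of length $\gamma=\pi R_0/8$, and $R(O)=16/R_0^2$. So already at $s=0$ one gets $\gamma\sqrt{R(O)/2}=\pi/\sqrt8\approx 1.1107$. An explicit computation of this kind is needed to complete the proof.
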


In \cite{EC}, Calabi proved the bound $c \leq 1.52$ and asked \cite[p. 122]{EC} whether $c < \sqrt{2}$ is possible because this improvement would eliminate a difficult lemma due to Levinson \cite[p. 123]{EC} from the proof of the main result of \cite{EC} in dimension $2$. A variant of Levinson's lemma that we prove in this paper is:

\begin{proposition}\label{prop:levin:mod}
For any Riemannian manifold $M^n$ and minimal unit-speed geodesic $C: [0,\gamma) \to M$,
\begin{equation}\label{eq:levin}\liminf_{s \to \gamma} \;(\gamma-s)^2{\rm Ric}(C'(s),C'(s)) \leq \frac{n-1}{4}.\end{equation}
Equality can hold on a surface of positive curvature.
\end{proposition}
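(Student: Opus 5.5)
Assume, for contradiction, that $(\gamma-s)^2{\rm Ric}(C'(s),C'(s)) \geq \frac{n-1}{4}+\epsilon$ on some interval $[s_0,\gamma)$ with $\epsilon>0$. We will contradict the minimality of $C$ through the second variation formula.

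Let $E_1,\dots,E_{n-1}$ be parallel orthonormal fields along $C$ orthogonal to $C'$. For a function $f$ on $[s_0,\gamma)$ with $f(s_0)=0$, consider the variations $V_i=fE_i$, and later let $f$ vanish near the endpoint after a cutoff. Summing the second variations over $i$ gives
\[
\sum_{i} I(V_i,V_i)=\int_{s_0}^{\gamma}\Big((n-1)f'^2-f^2\,{\rm Ric}(C',C')\Big)\,ds
\le \int_{s_0}^{\gamma}\Big((n-1)f'^2-\Big(\tfrac{n-1}{4}+\epsilon\Big)\frac{f^2}{(\gamma-s)^2}\Big)ds .
\]
This reduces the problem to a one-dimensional Hardy inequality. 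Substitute $t=\gamma-s$. The sharp Hardy constant $\int f'^2\geq \frac14\int f^2/t^2$ is not attained, so for any $\delta>0$ there is a compactly supported $f$ in $(0,\gamma-s_0)$ with $\int f'^2 < (\frac14+\delta)\int f^2/t^2$. A concrete choice is $f=t^{1/2+\eta}$ for small $\eta>0$, truncated linearly near $t=0$ and near $t=\gamma-s_0$. Taking $\delta$ with $(n-1)\delta<\epsilon$ makes the index form negative on a field vanishing at both endpoints of a compact subsegment $[s_0,s_1]\subset[s_0,\gamma)$. Then some $V_i$ has $I(V_i,V_i)<0$, so $C|_{[s_0,s_1]}$ is not minimizing, which contradicts minimality.

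The main technical step is this explicit Hardy test function. The cutoff error terms must be small. For $f=t^{1/2+\eta}$ on $[\tau,T]$, one computes $\int f'^2 = (\frac12+\eta)^2\int t^{2\eta-1}$ and $\int f^2/t^2=\int t^{2\eta-1}$. The ratio is $(\frac12+\eta)^2\to\frac14$. The cutoff pieces near $t=\tau$ contribute $O(\tau^{2\eta})$, which vanishes as $\tau\to 0$, and the main integrals stay bounded below. Note that we cannot let $\eta\to0$ with fixed $T$ while the main integral diverges; instead we fix $\eta$ small and then let $\tau\to0$. The outer cutoff at $t=T$ can be handled in the same way, with a fixed $O(1)$ contribution compared against an integral $\sim T^{2\eta}/(2\eta)$ that is large for small $\eta$.

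For the equality statement, we look for a rotationally symmetric surface $dr^2+\phi(r)^2d\theta^2$, with $C$ a radial geodesic running toward a boundary or end at $r=\gamma$. On a surface ${\rm Ric}=K=-\phi''/\phi$, so we want $K=\frac{1}{4(\gamma-r)^2}$ near $r=\gamma$. The solution is $\phi=(\gamma-r)^{1/2}$, or the combination $(\gamma-r)^{1/2}(a+b\log(\gamma-r))$ chosen to remain positive. We then glue this to a smooth cap near $r=0$ with $K>0$ throughout, and check that radial geodesics are minimizing, which holds by symmetry since $\phi>0$ on $(0,\gamma)$. This gives $\liminf=\frac14=\frac{n-1}{4}$ with $n=2$, and the surface has positive curvature.
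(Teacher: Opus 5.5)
Your proof is correct, but it takes a different route from the paper's. The paper feeds the Riccati inequality $H'-H^2\geq\frac{1}{n-1}{\rm Ric}(C',C')$, for the mean curvature of the geodesic spheres about $C(0)$, into Levinson's lemma (Lemma~\ref{LL}). It then contradicts that lemma's finiteness conclusion using the divergence of $\int(\gamma-s)^{-c'\sqrt{a/(n-1)}}\,ds$.

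You instead average the second variation over $n-1$ parallel normal fields, as in Myers' theorem, and reduce to the sharp Hardy inequality on a bounded interval. This is Kneser's oscillation threshold $\frac14$ for $y''+\frac{c}{t^2}y=0$, and it produces a conjugate point inside a compact subsegment. Your route is elementary and self-contained: it needs no Levinson lemma, and it never needs the geodesic spheres to be smooth at $C(s)$, which the Riccati route implicitly uses. It also explains the constant $\frac{n-1}{4}$ and why the extremal profiles are $t^{1/2}(a+b\log t)$.

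Your test-function bookkeeping is right. One correction: the reason $\frac14$ is approached by functions supported in $(0,\gamma-s_0)$ away from $0$ is scale invariance, confirmed by your explicit computation, not the fact that the constant is not attained.

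The paper chooses its route to show how Levinson's lemma enters Calabi's argument, where an integrated statement, not just a $\liminf$, is what is actually needed.

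For the equality case, the paper takes $a=0$, $b=-1$, $\gamma=1$. Then $h(s)=-\sqrt{1-s}\log(1-s)$ already satisfies $h(0)=0$, $h'(0)=1$, $h''(0)=0$ and has $K=\frac{1}{4(1-s)^2}$ on all of $[0,1)$. No gluing is needed, at the price of the metric being only $C^3$ at the pole.

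Your gluing to a smooth cap is only sketched, but it is routine. Choose $r_1$ near $\gamma$, so that $\phi'(r_1)<0$, and multiply the end profile by a small constant, which leaves $K$ unchanged. Then the chord slope $\phi(r_1)/r_1$ lies strictly between $\phi'(r_1)$ and $1$, so a strictly concave interpolation with the required jets at $0$ and $r_1$ exists. Minimality of the radial geodesics then follows because $r$ is the distance to the pole.
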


Now consider the Hessian manifold $(U,g)$ associated with a convex solution of $\det\hspace{0.25mm}{\rm Hess}(u) = 1$ on an open set $U \subset \mathbb{R}^n$. One of the key points of Calabi's work \cite{EC} is that if \eqref{eq:levin} can be improved to
\begin{equation}\label{eq:calabi:dream}\limsup_{s \to \gamma}\;(\gamma - s)^2{\rm Ric}(C'(s),C'(s)) < \frac{n}{n-1},\end{equation}
then his third-order estimate for $u$ follows with a purely dimensional constant $M_n < \infty$. In particular, Bernstein's theorem for the Monge-Ampère equation holds in dimension $n$. If $\frac{n-1}{4} < \frac{n}{n-1}$, i.e. $n \leq 5$, the original version of Levinson's lemma, which then implies \eqref{eq:calabi:dream} in an integrated sense, suffices for this conclusion. For $n = 2$, our global curvature estimate \eqref{eq:glob:curv:geo} along $C(s)$ trivially implies \eqref{eq:calabi:dream}.

It is a very interesting question whether \eqref{eq:calabi:dream} or a slight weakening in an integrated sense holds for Monge-Ampère manifolds of dimension $6$ or higher, the point being that the Monge-Ampère structure plays no role in the proof of Levinson's lemma. We end this paper with Examples \ref{ex:ric:ma}, \ref{ex:schwarz:2} and \ref{ex:2nd:order:harmonic}, which heuristically suggest that, for Monge-Ampère manifolds of dimension $n = 2$, the left-hand side of \eqref{eq:calabi:dream} might always be bounded by $\frac{2}{9} < \frac{n-1}{4}$. (Incidentally these examples also show that $M_2 \geq 1$, that $c \geq 1.11$ and that Proposition \ref{prop:levin:mod} may fail if $C$ is not minimal.) It would be very interesting to make this heuristic rigorous or to find an analogous heuristic in higher dimensions.

\subsection*{Acknowledgments}

This paper is based on the author's PhD thesis \cite{JSAR} at the University of Münster, supervised by Hans-Joachim Hein and Bianca Santoro. This PhD project was funded by the Deutsche Forschungsgemeinschaft (DFG, German Research Foundation) under Germany's Excellence Strategy EXC 2044/2–390685587, Mathematics Münster: Dynamics–Geometry–Structure" and by the Alexander von Humboldt Foundation through the Humboldt Professorship awarded to Gustav Holzegel.

\section{General formulas and proof of Theorem \ref{thm:main}}\label{sec:general}

\subsection{Semi-flat Calabi-Yau metrics}

The starting point of our computation are the following simple observations. These are all either well-known or folklore and are included here just for clarity. Details of the computations can be found in \cite[Chapter 4.1]{JSAR}.

\begin{lemma}\label{MAMG}
    Let $x^i + \sqrt{-1}x^i_*$, $i = 1, \ldots, n$, be the standard linear coordinates on $\mathbb{C}^n$. Let $U\subset \R^n$ be an open set. Let $u:U\to \R$ be a smooth convex solution of the real Monge-Ampère equation 
    \begin{equation}\label{MA}
    \det \hspace{0.25mm} {\rm Hess}(u)=1.
    \end{equation}
    Then the manifold $M:=U + \sqrt{-1}\R^n\subset\C^{n}$ is a Ricci-flat Kähler manifold with the data
  $$J_M\left( \frac{\partial}{\partial x^i}\right)=\frac{\partial}{\partial x_{*}^{i}}, \quad \omega_M=\displaystyle \sum_{i,j=1}^n u_{x^{i}x^{j}}dx^{i}\wedge dx^{j}_{*},\quad g_M=\displaystyle \sum_{i,j=1}^n u_{x^{i}x^{j}}(dx^{i}\otimes dx^{j}+dx_{*}^{i}\otimes dx_{*}^{j}),$$
with $n$ commuting, pointwise linearly independent, holomorphic Killing vector fields $\frac{\partial}{\partial x^i_*}$. 
\end{lemma}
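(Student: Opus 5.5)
The plan is to identify $M$ as a Kähler manifold with Kähler potential $\phi := 4\,u\circ{\rm Re}$, or the appropriate constant multiple, and to read off everything from $\phi$ depending only on the real parts. Write $z^i = x^i + \sqrt{-1}x^i_*$.

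\textbf{Step 1 (complex structure).} $J_M$ as defined is the standard complex structure of $\mathbb{C}^n$ restricted to the open set $M$. It is therefore integrable, and $M$ is a complex manifold.

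\textbf{Step 2 (Kähler structure).} Since $\phi$ depends only on $x = {\rm Re}\,z$, we have
$$\frac{\partial^2 \phi}{\partial z^i\partial\bar z^j} = \tfrac14\cdot 4\,u_{x^ix^j}.$$
Hence
$$\tfrac{\sqrt{-1}}{2}\partial\bar\partial\phi = \tfrac{\sqrt{-1}}{2}\sum_{i,j} u_{x^ix^j}\,dz^i\wedge d\bar z^j.$$
Expanding $dz^i\wedge d\bar z^j$ and using the symmetry of ${\rm Hess}(u)$, the terms $dx^i\wedge dx^j$ and $dx^i_*\wedge dx^j_*$ cancel. The result is $\sum_{i,j} u_{x^ix^j}\,dx^i\wedge dx^j_*$, which is $\omega_M$ up to the normalization constant one fixes in $\phi$.

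So $\omega_M$ is closed and of type $(1,1)$. Moreover $g_M(\cdot,\cdot) = \omega_M(\cdot, J_M\cdot)$, which can be checked on the basis vectors $\partial_{x^i},\partial_{x^i_*}$. This $g_M$ is positive definite because ${\rm Hess}(u)>0$. Thus $(M,J_M,g_M,\omega_M)$ is Kähler.

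\textbf{Step 3 (Ricci-flatness).} The Ricci form is
$$-\sqrt{-1}\,\partial\bar\partial\log\det\bigl(\partial_i\bar\partial_j\phi\bigr) = -\sqrt{-1}\,\partial\bar\partial\log\bigl(c\det{\rm Hess}(u)\bigr) = 0,$$
by \eqref{MA}.

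\textbf{Step 4 (Killing fields).} The fields $\partial_{x^i_*}$ are translations in the imaginary directions. They commute and are pointwise linearly independent, being coordinate fields. They preserve $M = U+\sqrt{-1}\mathbb{R}^n$. Their flows preserve $J_M$ (the flows are complex-affine), so $\partial_{x^i_*}$ is the real part of the holomorphic field $\sqrt{-1}\,\partial_{z^i}$, up to the factor $2$. They also preserve $g_M$, since the coefficients $u_{x^ix^j}$ do not depend on $x_*$.

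The only step needing care is the sign and factor bookkeeping in Step 2, and it is routine.
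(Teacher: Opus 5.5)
Your proposal is correct and matches the paper's implicit argument. The paper defers details to the thesis, but in Section 2.3 it works with exactly the potential $K = 4u\circ{\rm Re}$, $\omega_M = \frac{\sqrt{-1}}{2}\partial\overline{\partial}K$, $g_M = \omega_M(\cdot,J_M\cdot)$, with Ricci-flatness reducing to $\partial\overline{\partial}\log\det{\rm Hess}(u) = 0$. Two small points: with your choice $\phi = 4u\circ{\rm Re}$ the constant in Step 2 comes out exactly $1$, so no normalization hedge is needed; and ${\rm Hess}(u) > 0$ should be justified as convexity together with $\det{\rm Hess}(u) = 1$.
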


We now follow the conventions of Besse \cite[Chapter 9]{Besse}.  The projection $\pi: M\to U$, $\pi(x,x_{*})=x$, is a Riemannian submersion with respect to $g_M$ and the Monge-Ampère metric $g$ on $U$, $g|_x = {\rm Hess}(u)|_x$ for all $x \in U$, with trivial horizontal distribution $\mathscr{H} = TU \oplus \{0\}$.

  \begin{lemma}\label{TA}
The O'Neill tensors of the Riemannian submersion $\pi$ are given by
$$T=\frac{1}{2}g^{k\ell}u_{x^{i}x^{j}x^{\ell}}dx_{*}^{i}\otimes dx^j\otimes \partial_{x^{k}_{*}}-\frac{1}{2} g^{k\ell}u_{x^{i}x{^j}x^{\ell}}dx_{*}^{i}\otimes dx_{*}^{j}\otimes \partial_{x^{k}},\quad A\equiv 0.$$
Moreover, the mean curvature $N$ of the fibers of $\pi$, i.e. the vertical trace of $T$, vanishes.
  \end{lemma}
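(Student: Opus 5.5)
We compute the O'Neill tensors directly from Besse's definitions,
$$T_EF=\mathscr{H}\nabla_{\mathscr{V}E}\mathscr{V}F+\mathscr{V}\nabla_{\mathscr{V}E}\mathscr{H}F,\qquad A_EF=\mathscr{H}\nabla_{\mathscr{H}E}\mathscr{V}F+\mathscr{V}\nabla_{\mathscr{H}E}\mathscr{H}F,$$
where $\nabla$ is the Levi-Civita connection of $g_M$. The vertical distribution is spanned by the $\partial_{x^k_*}$ and the horizontal one by the $\partial_{x^k}$. This orthogonal splitting uses the block-diagonal form of $g_M$ in Lemma \ref{MAMG}.

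\textbf{Step 1: Christoffel symbols.} All coefficients of $g_M$ depend only on $x$. We therefore use the Koszul formula on the coordinate fields, with derivatives in $x_*$ vanishing. Set $u_{ij\ell}=u_{x^ix^jx^\ell}$. The only nonzero Christoffel symbols are then
\begin{gather*}
\nabla_{\partial_{x^i}}\partial_{x^j}=\tfrac12 g^{k\ell}u_{ij\ell}\,\partial_{x^k},\qquad
\nabla_{\partial_{x^i_*}}\partial_{x^j_*}=-\tfrac12 g^{k\ell}u_{ij\ell}\,\partial_{x^k},\\
\nabla_{\partial_{x^i_*}}\partial_{x^j}=\nabla_{\partial_{x^j}}\partial_{x^i_*}=\tfrac12 g^{k\ell}u_{ij\ell}\,\partial_{x^k_*}.
\end{gather*}
Here $g^{k\ell}$ is the inverse of ${\rm Hess}(u)$. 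The first formula uses the full symmetry of $u_{ij\ell}$. The second comes from $-\tfrac12\partial_{x^\ell}g_{ij}$. The third comes from $\tfrac12\partial_{x^j}g_{i\ell}$ in the $x_*$-block.

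\textbf{Step 2: reading off $T$ and $A$.} The vertical-vertical derivative is purely horizontal. Hence $T_{\partial_{x^i_*}}\partial_{x^j_*}=-\tfrac12 g^{k\ell}u_{ij\ell}\partial_{x^k}$. The vertical derivative of a horizontal field is purely vertical. Hence $T_{\partial_{x^i_*}}\partial_{x^j}=\tfrac12 g^{k\ell}u_{ij\ell}\partial_{x^k_*}$. Writing these as tensors gives exactly the claimed formula, and $T$ vanishes when its first argument is horizontal.

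For $A$, the horizontal derivative of a horizontal field is horizontal, so $\mathscr{V}\nabla_{\mathscr{H}E}\mathscr{H}F=0$. The horizontal derivative of a vertical field is vertical, so $\mathscr{H}\nabla_{\mathscr{H}E}\mathscr{V}F=0$. Thus $A\equiv0$. This also follows from integrability of $\mathscr{H}$, since $[\partial_{x^i},\partial_{x^j}]=0$, together with the standard identity $A_XY=\tfrac12\mathscr{V}[X,Y]$ for horizontal $X,Y$.

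\textbf{Step 3: mean curvature.} By definition $N=\sum_{a}T_{V_a}V_a$ over a vertical orthonormal frame. Equivalently,
$$N=g^{ij}T_{\partial_{x^i_*}}\partial_{x^j_*}=-\tfrac12 g^{ij}g^{k\ell}u_{ij\ell}\,\partial_{x^k}.$$
Jacobi's formula gives $g^{ij}u_{ij\ell}=\partial_{x^\ell}\log\det{\rm Hess}(u)$. This vanishes by \eqref{MA}, so $N=0$.

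The only step needing care is the bookkeeping of signs and index placement in the Koszul computation of Step 1. Once those symbols are correct, Steps 2 and 3 are immediate.
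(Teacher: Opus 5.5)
Your proof is correct and takes essentially the same approach as the paper. The paper defers the direct Levi-Civita computation of $T$ and $A$ to the thesis, and it derives $N=0$ from the formula for $T$ via the differentiated Monge--Ampère equation. It phrases this as $g$-harmonicity of $u_{x^\ell}$, which is equivalent to your Jacobi-formula identity $g^{ij}u_{x^ix^jx^\ell}=\partial_{x^\ell}\log\det{\rm Hess}(u)=0$; the paper also mentions, as an aside, that the fibers are special Lagrangian and hence minimal.
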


The property $N = 0$ follows from the given formula for $T$ by using the linearization of the Monge-Ampère equation \eqref{MA}, which says that $u_{x^\ell}$ is a harmonic function with respect to the metric $g$ on $U$ for every $\ell$. However, it is also a well-known folklore observation that the fibers of $\pi$ are special Lagrangian submanifolds of the Calabi-Yau manifold $M$ with its parallel holomorphic volume form $$\Omega_M = (dx^1 + \sqrt{-1} dx^1_*) \wedge \cdots \wedge (dx^n + \sqrt{-1} dx^n_*),$$ so they are volume-minimizing under compactly supported perturbations, hence minimal.

Since ${\rm Ric}_{g_M} \equiv 0$, the O'Neill formulas now immediately yield the following:

\begin{lemma}\label{SCT}
  The Ricci tensor and scalar curvature of the Monge-Ampère metric $g$ on $U$ are given by
 $${\rm Ric}_g(X,Y)=g(T X,T Y), \quad R_g =\frac{1}{4}{}g^{ij}g^{k\ell}g^{mn}u_{x^{i}x^{k}x^{m}}u_{x^{j}x^{\ell}x^{n}}.$$
In particular, the metric $g$ has non-negative Ricci curvature.
\end{lemma}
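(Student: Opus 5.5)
The plan is to apply O'Neill's formulas for the Riemannian submersion $\pi: (M,g_M) \to (U,g)$ in the conventions of Besse \cite[Chapter 9]{Besse}, using the data from Lemma \ref{TA}. For horizontal vector fields $X,Y$, Besse's formula for the Ricci tensor of the total space reads
$$\mathrm{Ric}_{g_M}(X,Y) = \mathrm{Ric}_g(\check X,\check Y) - 2g_M(A_X,A_Y) - g_M(TX,TY) + \tfrac12\bigl(g_M(\nabla_X N,Y)+g_M(\nabla_Y N,X)\bigr).$$
Here $(TX,TY) = \sum_V g_M(T_V X, T_V Y)$, summed over a vertical orthonormal frame $V$. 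By Lemma \ref{TA} we have $A\equiv 0$ and $N\equiv 0$. By Lemma \ref{MAMG} we have $\mathrm{Ric}_{g_M}\equiv 0$. Together these give $\mathrm{Ric}_g(X,Y) = g(TX,TY)$ immediately. This is manifestly nonnegative on the diagonal, which proves the final claim.

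Next I make the scalar curvature explicit.

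\begin{enumerate}
\item Take $X = \partial_{x^a}$ and $Y = \partial_{x^b}$. Use the vertical frame built from $\partial_{x^i_*}$ with the inverse metric $g^{ij}$, so that $(TX,TY) = g^{ij} g_M(T_{\partial_{x^i_*}}X, T_{\partial_{x^j_*}}Y)$.
\item From Lemma \ref{TA}, $T_{\partial_{x^i_*}}\partial_{x^a} = \frac12 g^{k\ell}u_{x^ix^ax^\ell}\partial_{x^k_*}$. Using $g_M(\partial_{x^k_*},\partial_{x^p_*}) = u_{x^kx^p}$, we get
$$\mathrm{Ric}_{ab} = \tfrac14 g^{ij} g^{k\ell} u_{x^ix^ax^k} u_{x^jx^bx^\ell}.$$
\item Tracing with $g^{ab}$ gives $R_g = \frac14 g^{ij}g^{k\ell}g^{mn}u_{x^ix^kx^m}u_{x^jx^\ell x^n}$, after relabeling indices.
\end{enumerate}

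The main obstacle is bookkeeping rather than substance. I have to confirm that Besse's sign and normalization conventions match the stated $T$. In particular, the contraction must come out as $+g(TX,TY)$ on the right once $\mathrm{Ric}_{g_M}=0$, so that nonnegativity follows. I also have to verify that the $\nabla N$ term truly drops; since $N\equiv 0$ identically, its derivative vanishes as well.

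As a cross-check, I would compare the resulting formula with Calabi's known expression for the Ricci curvature of the Monge–Ampère metric. On the diagonal $X=Y$, the formula reads $\mathrm{Ric}(X,X) = |T_{\cdot}X|^2 \ge 0$.
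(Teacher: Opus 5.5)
Your proposal is correct and follows the paper's argument exactly. The paper applies O'Neill's Ricci formula for horizontal vectors (Besse 9.36c), using $A\equiv 0$ and $N\equiv 0$ from Lemma \ref{TA} and ${\rm Ric}_{g_M}\equiv 0$ from Lemma \ref{MAMG}, to get ${\rm Ric}_g(X,Y)=g(TX,TY)$, which is nonnegative on the diagonal. Your contraction via $T_{\partial_{x^i_*}}\partial_{x^a}=\frac12 g^{k\ell}u_{x^ix^ax^\ell}\partial_{x^k_*}$ and $g_M(\partial_{x^k_*},\partial_{x^p_*})=u_{x^kx^p}$, followed by the trace, is the index bookkeeping the paper leaves implicit, and it checks out.
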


Calabi \cite[Eq. 2.8]{EC} proved the above formula for $R_g$ by direct computations on $U$. 

\subsection{Partial Legendre transform of Kähler metrics with symmetries}

\begin{definition}
    Let $\langle\cdot,\cdot\rangle$ denote the standard inner product on $\mathbb{R}^n$. Let $f:U\subset \R^{k} \times \R^{n-k}\to \R$ be a $C^{1}$ function such that the map $x \mapsto \frac{\partial f}{\partial x}(x,y)$ is injective for every fixed $y$. Then the \emph{partial Legendre transform} of $f$ is the function $g:V\subset \R^{k} \times \R^{n-k}\to \R$ determined by
    \begin{itemize}
    \item $V=T_{1}(U)$, where 
    $$T_{1}(x,y):=\left(\frac{\partial f}{\partial x}(x,y),y\right),\;\text{and}$$
    \item $f(x,y)+g(\tilde{x},\tilde{y})=\langle x,\tilde{x} \rangle$ for all $(x,y) \in U$, where $(\tilde{x},\tilde{y}) = T_1(x,y)$.
    \end{itemize}
\end{definition}

We now use this concept to give a convenient description of Kähler metrics with sufficiently many symmetries. This is a small extension of the Pedersen-Poon ansatz \cite{PP}, without assuming from the outset that the metric is hyper-Kähler. Since it is very important for us to have precise formulas, we will develop this ansatz carefully, deferring some tedious computations to \cite[Chapter 4.2]{JSAR}.

Let $(M,\omega_M,J_M,g_M)$ be a Kähler manifold of real dimension $4m$. We suppose that $M$ admits $m$ commuting and pointwise linearly independent holomorphic Killing vector fields $X_{i}$. 
We furthermore assume that there exist
local holomorphic coordinates $(w_{i},z_{i})$ on $M$ such that
$$X_{i}=\frac{\partial}{\partial \mathrm{Im}(w_{i})},$$
and such that there exists a local Kähler potential $K$ for $\omega_M$ which is invariant under the action of the holomorphic Killing vector fields, i.e.
$$K_{w_{i}}=K_{\overline{w}_{i}}.$$
In these coordinates, the Kähler form $\omega_M$ takes the form
$$\omega_M=\frac{\sqrt{-1}}{2} \sum_{i,j=1}^{m}K_{w_{i}\overline{w}_j}dw_{i}\wedge d\overline{w}_j+K_{w_{i}\overline{z}_j}dw_{i}\wedge d\overline{z}_j+K_{z_{i}\overline{w}_j}dz_{i}\wedge d\overline{w}_j+K_{z_{i}\overline{z}_j}dz_{i}\wedge d\overline{z}_j.$$
Importantly, the matrix $(K_{w_i \overline{w}_j})$ is now real, so in particular symmetric and positive definite. Viewing $K$ as a function of the coordinates ${\rm Re}(w_i)$ and $z_i$, we let $\Lambda/4$ denote the partial Legendre transform of $K/4$ with respect to the coordinates ${\rm Re}(w_i)$. Thus,
\begin{equation}\label{LT}
    \frac{\Lambda}{4}=-\frac{K}{4}+ \sum_{i=1}^{m} \tilde{x}_{i}{\rm Re}(w_i),
\end{equation}
where the new coordinates $(\tilde{x}_i,\tilde{z}_{i})$ are given by
$$\tilde{x}_{i}:=\frac{K_{{\rm Re}(w_i)}}{4}=\frac{K_{w_{i}}}{2},\quad  \tilde{z}_{i}:={z}_{i}.$$
Note that, after shrinking its domain, the map $({\rm Re}(w_i),z_i) \mapsto (\tilde{x}_i, \tilde{z}_i)$ will be a $C^1$ diffeomorphism onto its image. Indeed, its Jacobian has the form $(\begin{smallmatrix}A & \ast \\ 0 & I_{2m}\end{smallmatrix})$, where
\begin{equation}\label{Jacobian}
A_{ij} := \frac{\partial \tilde{x}_{i}}{\partial {\rm Re}(w_{j})} = K_{w_i w_j} = K_{w_i \overline{w}_j}.
\end{equation}
As noted above, this is a real, symmetric, positive definite matrix, so it is in particular invertible.

By differentiating \eqref{LT} with respect to ${\rm Re}(w_{j})$ and using the chain rule, we obtain
\begin{equation}\label{fw}\sum_{i=1}^{m}\left(\frac{\partial \Lambda}{\partial \tilde{x}_{i}}-4\mathrm{Re}(w_{i})\right)\frac{\partial \tilde{x}_{i}}{\partial {\rm Re}(w_{j})}=0  \quad \mbox{for} \quad j=1,\ldots,m.\end{equation}
Observe that the coefficient matrix of this linear system is given by $A^\top$ with $A$ as in \eqref{Jacobian}, so it is invertible.
 Thus, \eqref{fw} is equivalent to
\begin{equation}\label{Rew}\frac{\partial \Lambda}{\partial \tilde{x}_{i}}=4\mathrm{Re}(w_{i}) \quad \text{for}  \quad i = 1, \ldots, m.\end{equation}
From this we may deduce the following, where upper indices denote matrix inversion:
\begin{align}\begin{split}\label{KF}
   K_{w_{i}\overline{w}_{j}}  & =  4\Lambda^{\tilde{x}_{i}\tilde{x}_{j}}, \\
   K_{z_{i}\overline{w}_{j}}  & =  -2\Lambda_{\tilde{z}_i \tilde{x}_k}\Lambda^{\tilde{x}_{k}\tilde{x}_{j}},\\
   K_{z_{i}\overline{z}_{j}}  & =  -\Lambda_{\tilde{z}_{i}\overline{\tilde{z}}_{j}}+\Lambda_{\tilde{z}_{i}\tilde{x}_{k}}\Lambda^{\tilde{x}_{k}\tilde{x}_{\ell}}\Lambda_{\tilde{x}_{\ell}\overline{\tilde{z}}_{j}}.
\end{split}\end{align}
For instance, by differentiating \eqref{Rew} by $\tilde{x}_j$ and using \eqref{Jacobian}, we get $\Lambda_{\tilde{x}_i \tilde{x}_j} = 4A^{ij}$, which proves the first line of \eqref{KF}. The other two lines are more complicated. By applying the operators $\partial,\overline{\partial}$ defined by the holomorphic coordinates $(w_i,z_i)$ to \eqref{LT}, roughly as in \cite[Eq. 3.142]{HKLR}, we obtain first
\begin{align*}\begin{split}
   \partial K  & =  \displaystyle \partial\left(-\Lambda+\sum_{i=1}^{m} 4\tilde{x}_{i}\mbox{Re}(w_{i}) \right) \\
   & =  \displaystyle \sum_{i=1}^{m}-\frac{\partial \Lambda}{\partial w_{i}}dw_{i}-\frac{\partial \Lambda}{\partial z_{i}}dz_{i}+4\mbox{Re}(w_{i})\partial\tilde{x}_{i}+4\tilde{x}_{i}\partial\mbox{Re}(w_{i})\\
&= \displaystyle \sum_{i=1}^{m} -\frac{\partial \Lambda}{\partial w_{i}}dw_{i}-\frac{\partial \Lambda}{\partial z_{i}}dz_{i}+\frac{\partial \Lambda}{\partial \tilde{x}_i}\left( \sum_{j=1}^{m} \frac{\partial \tilde{x}_{i}}{\partial w_{j}}dw_{j}+\frac{\partial \tilde{x}_i}{\partial z_j}dz_j\right)+2\tilde{x}_{i}dw_{i}.
\end{split}\end{align*}
Now observe that 
$$\sum_{i=1}^{m}\frac{\partial \Lambda}{\partial \tilde{x}_i}\frac{\partial \tilde{x}_{i}}{\partial w_{j}}= \frac{\partial \Lambda}{\partial w_j},\quad   \quad \sum_{i=1}^m \frac{\partial \Lambda}{\partial\tilde{x}_i}\frac{\partial \tilde{x}_i}{\partial z_j} = \frac{\partial \Lambda}{\partial z_j} - \frac{\partial \Lambda}{\partial \tilde{z}_j}.$$
Thus,
\begin{align*}\begin{split}\partial K&=\sum_{i=1}^{m}-\frac{\partial \Lambda}{\partial \tilde{z}_{i}}dz_{i}+2\tilde{x}_{i}dw_{i},\\
     \overline{\partial}\partial K & = \sum_{i=1}^m -\overline{\partial}\left( \frac{\partial \Lambda}{\partial \tilde{z}_{i}}\right)\wedge d z_{i}  + 2\overline{\partial}\tilde{x}_{i}\wedge dw_{i}\\
     & = \sum_{i,j=1}^m -\Lambda_{\tilde{z}_i\overline{z}_j}d\overline{z}_j \wedge dz_i - \Lambda_{\tilde{z}_i \overline{w}_j}d\overline{w}_j \wedge dz_i + 2\frac{\partial\tilde{x}_i}{\partial \overline{z}_j}d\overline{z}_j \wedge dw_i + 2\frac{\partial \tilde{x}_i}{\partial \overline{w}_j} d\overline{w}_j \wedge dw_i.
\end{split}\end{align*}
Comparing coefficients, we deduce that 
\begin{align}
K_{z_i\overline{w}_j} &=
-\Lambda_{\tilde{z}_i\overline{w}_j},\label{WTF1}\\
K_{z_i\overline{z}_j} &= -\Lambda_{\tilde{z}_i\overline{z}_j}.\label{WTF2}
\end{align}
From \eqref{WTF1} and the chain rule, using also $\Lambda_{\tilde{x}_i\tilde{x}_j} = 4A^{ij}$, we get the second line of \eqref{KF}. 
From \eqref{WTF2} and the chain rule, using also the second line of \eqref{KF}, we get the third line of \eqref{KF}.

We now rewrite \eqref{KF} in a more compact way. By calling
$$\Phi=(\Phi_{ij}):=\frac{1}{4}(\Lambda_{\tilde{x}_{i}\tilde{x}_{j}}) ,\quad  \Gamma=(\Gamma_{ij}):=\frac{1}{2}(\Lambda_{\tilde{x}_{i}\overline{\tilde{z}}_{j}}),$$
we find that
\begin{align}
{\rm Hess}_{\C}(K)  = \begin{pmatrix}
     (K_{w_{i}\overline{w}_{j}})  & (K_{w_{i}\overline{z}_{j}})  \\
     (K_{z_{i}\overline{w}_{j}})  & (K_{z_{i}\overline{z}_{j}})
    \end{pmatrix}
     =  \begin{pmatrix}
   \Phi^{-1}  & -\Phi^{-1}\Gamma   \\
   - \overline{\Gamma}^{\top}\Phi^{-1}  & -(\Lambda_{\tilde{z}_{i}\overline{\tilde{z}}_{j}}) +\overline{\Gamma}^{\top} \Phi^{-1} \Gamma
\end{pmatrix}.\label{legtrf}
   \end{align}
The complex Hessian of $K$ defines a Hermitian metric
$$h_M=\sum_{i,j=1}^{m} K_{w_{i}\overline{w}_j}dw_{i}\otimes d\overline{w}_j+K_{w_{i}\overline{z}_j}dw_{i}\otimes d\overline{z}_j+K_{z_{i}\overline{w}_j}dz_{i}\otimes d\overline{w}_j+K_{z_{i}\overline{z}_j}dz_{i}\otimes d\overline{z}_j,$$
which encodes the Riemannian metric $g_M= {\rm Re}(h_M)$ associated with $\omega_M$. 
Equation \eqref{legtrf} expresses the coefficients in terms of ${\rm Hess}(\Lambda)$. To complete the partial Legendre transform, we also need to convert $dw_i, dz_i$ into the new coordinates
$$\tilde{x}_{i}:= \frac{K_{{\rm Re}(w_i)}}{4} =\frac{K_{w_{i}}}{2},\quad   \tilde{s}_{i}:=\mathrm{Im}(w_{i})  ,\quad  \tilde{y}_{i}:=\mathrm{Re}(z_{i}) ,\quad  \tilde{t}_{i}:=\mathrm{Im}(z_{i}).$$
By \eqref{Rew}, the Jacobian of this coordinate change is given by
\begin{align*}
dz_i & =  d\tilde{y}_i + \sqrt{-1}d\tilde{t}_i,\\
dw_i & =  d{\rm Re}(w_i) + \sqrt{-1}d\tilde{s}_i\nonumber\\
& =  \left(\sum_{k=1}^m \frac{1}{4}\Lambda_{\tilde{x}_i\tilde{x}_k}d\tilde{x}_k + \frac{1}{4}\Lambda_{\tilde{x}_{i}\tilde{y}_{k}}d\tilde{y}_{k}+\frac{1}{4}\Lambda_{\tilde{x}_{i}\tilde{t}_{k}}d\tilde{t}_{k}  \right)+ \sqrt{-1}d\tilde{s}_i\nonumber\\
 & =  \left(\sum_{k=1}^m \Phi_{ik}d\tilde{x}_{k} + {\rm Re}(\Gamma_{ik})d\tilde{y}_{k}+{\rm Im}(\Gamma_{ik})d\tilde{t}_{k} \right)+ \sqrt{-1}d\tilde{s}_i .
\end{align*}
After a long calculation similar to \cite[Eq. 3.13--3.19]{PP}, we deduce the following formula for $g_M$:

\begin{theorem}\label{CH}
Let $(M,\omega_M,J_M,g_M)$ be a Kähler manifold of real dimension $4m$ with $m$ Killing vector fields. Assume $(w_{i},z_{i})$ are local holomorphic coordinates such that these Killing vector fields take the form $\partial_{{\rm Im}(w_i)}$. Assume $K$ is a local Kähler potential of $\omega_M$ independent of the coordinates $\mathrm{Im}(w_{i})$. Then, after shrinking the coordinate domain if necessary, ${\rm Hess}_{\C}(K)$ can be expressed in terms of the partial Legendre transform $\Lambda/4$ of $K/4$ with respect to the coordinates $\mathrm{Re}(w_{i})$\textup{:}
\begin{align}{\rm Hess}_{\C}(K)=\begin{pmatrix}
   \Phi^{-1}  & -\Phi^{-1}\Gamma   \\
   - \overline{\Gamma}^{\top}\Phi^{-1}  & -(\Lambda_{\tilde{z}_{i}\overline{\tilde{z}}_{j}}) +\overline{\Gamma}^{\top} \Phi^{-1} \Gamma
\end{pmatrix},\label{eq:cx:hessian:leg}\end{align}
where
$$\Phi=(\Phi_{ij}):=\frac{1}{4}(\Lambda_{\tilde{x}_{i}\tilde{x}_{j}}) ,\quad  \Gamma=(\Gamma_{ij}):=\frac{1}{2}(\Lambda_{\tilde{x}_{i}\overline{\tilde{z}}_{j}}),$$
with $\Lambda$ defined in the coordinates $(\tilde{x}_{i},\tilde{z}_{i})$,
$$\tilde{x}_{i}:= \frac{K_{{\rm Re}(w_i)}}{4} = \frac{K_{w_{i}}}{2},\quad  \tilde{z}_{i}:=z_{i},\quad  
    \frac{\Lambda}{4}:=-\frac{K}{4}+ \sum_{i=1}^{m} \tilde{x}_{i}{\rm Re}(w_i).
$$
Moreover, the Riemannian metric $g_M$ associated with $\omega_M$ takes the form
\begin{align*}
g_M & = \sum_{i,j=1}^m \biggl(\Phi_{ij} d\tilde{x}_{i}\otimes d\tilde{x}_{j}-\displaystyle \frac{\Lambda_{\tilde{y}_{i}\tilde{y}_{j}}+\Lambda_{\tilde{t}_{i}\tilde{t}_{j}}}{4}(d\tilde{y}_{i}\otimes d\tilde{y}_{j} + d\tilde{t}_{i} \otimes d\tilde{t}_{j})  \\
     &  \; \quad  \quad  \quad  \quad  \quad  \quad \quad \quad \, \, \, \, \, \,  +\displaystyle
    \frac{\Lambda_{\tilde{y}_{i}\tilde{t}_{j}}-\Lambda_{\tilde{t}_{i}\tilde{y}_{j}}}{4}(d\tilde{t}_{i}\otimes d\tilde{y}_{j}-d\tilde{y}_{i}\otimes d\tilde{t}_{j})+\Phi^{ij}(d\tilde{s}_{i}+\theta_{i})\otimes(d\tilde{s}_{j}+\theta_{j})\biggr), \nonumber
\end{align*}
with
$$\tilde{x}_{i}:= \frac{K_{{\rm Re}(w_i)}}{4} =\frac{K_{w_{i}}}{2},\quad   \tilde{s}_{i}:=\mathrm{Im}(w_{i})  ,\quad  \tilde{y}_{i}:=\mathrm{Re}(z_{i}) ,\quad  \tilde{t}_{i}:=\mathrm{Im}(z_{i}),$$
and
$$\theta_{i}:=\sum_{k=1}^{m}\frac{ \sqrt{-1}}{2}( \overline{\Gamma}_{ik}d\tilde{z}_{k}-\Gamma_{ik}d\overline{\tilde{z}}_{k}).$$
\end{theorem}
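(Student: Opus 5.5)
The plan has two parts, matching the two claims of Theorem \ref{CH}. The formula \eqref{eq:cx:hessian:leg} for ${\rm Hess}_{\C}(K)$ is exactly \eqref{legtrf}, which has already been derived from \eqref{KF}, so it only needs to be restated. For the record, I will still check the three blocks. The $(w,w)$ block is $K_{w_i\overline w_j}=4\Lambda^{\tilde x_i\tilde x_j}=(\Phi^{-1})_{ij}$. The $(z,w)$ block is $-2\Lambda_{\tilde z_i\tilde x_k}\Lambda^{\tilde x_k\tilde x_j}$. Since $\Lambda$ is real, $\Lambda_{\tilde z_i\tilde x_k}=\overline{\Lambda_{\overline{\tilde z}_i\tilde x_k}}=2\overline\Gamma_{ki}$, so this block equals $-(\overline\Gamma^\top\Phi^{-1})_{ij}$. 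The $(w,z)$ block follows by Hermitian symmetry. The $(z,z)$ block follows in the same way from the third line of \eqref{KF}. The shrinking of the domain is needed only so that the map $({\rm Re}(w_i),z_i)\mapsto(\tilde x_i,\tilde z_i)$ is a diffeomorphism, and this was justified via \eqref{Jacobian}.

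For the metric I will use $g_M={\rm Re}(h_M)$ together with the block form. Write $dw=\Phi\,d\tilde x+\beta+\sqrt{-1}\,d\tilde s$ in vector notation, where $\beta_i=\sum_k{\rm Re}(\Gamma_{ik})d\tilde y_k+{\rm Im}(\Gamma_{ik})d\tilde t_k={\rm Re}\bigl(\sum_k\Gamma_{ik}d\overline{\tilde z}_k\bigr)$. The key step is to complete the square in $h_M$:
\[
h_M=(dw-\Gamma d\overline z)^\top\Phi^{-1}\overline{(dw-\Gamma d\overline z)}\;-\;\Lambda_{\tilde z_i\overline{\tilde z}_j}\,dz_i\otimes d\overline z_j .
\]
To be careful about transposes, I will read this as $\sum_{i,j}(\Phi^{-1})_{ij}\,\eta_i\otimes\overline\eta_j$ with $\eta_i=dw_i-\sum_k\Gamma_{ik}d\overline z_k$. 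Expanding it reproduces the four blocks of \eqref{eq:cx:hessian:leg}, using that $\Phi$ is real symmetric.

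Next I simplify $\eta_i$. Note that $\sum_k\Gamma_{ik}d\overline z_k=\beta_i-\sqrt{-1}\theta_i$, since $\theta_i={\rm Im}\bigl(\sum_k\Gamma_{ik}d\overline z_k\bigr)$ up to the sign convention chosen in the definition of $\theta_i$. Hence
\[
\eta_i=(\Phi\,d\tilde x)_i+\sqrt{-1}\,(d\tilde s_i+\theta_i)
\]
holds, after checking the sign against $\theta_i=\frac{\sqrt{-1}}2(\overline\Gamma_{ik}d\tilde z_k-\Gamma_{ik}d\overline{\tilde z}_k)=-{\rm Im}(\Gamma_{ik}d\overline{\tilde z}_k)$, which gives $-\sqrt{-1}\,{\rm Im}=+\sqrt{-1}\theta$. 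Taking real parts, the cross terms cancel because $\Phi^{-1}$ is real symmetric. The first term then gives
\[
(\Phi d\tilde x)^\top\Phi^{-1}(\Phi d\tilde x)+(d\tilde s+\theta)^\top\Phi^{-1}(d\tilde s+\theta)=\Phi_{ij}\,d\tilde x_i\,d\tilde x_j+\Phi^{ij}(d\tilde s_i+\theta_i)(d\tilde s_j+\theta_j).
\]

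The last term to handle is $-{\rm Re}\bigl(\Lambda_{\tilde z_i\overline{\tilde z}_j}dz_i\otimes d\overline z_j\bigr)$. I will write $\Lambda_{\tilde z_i\overline{\tilde z}_j}=\frac14\bigl(\Lambda_{yy}+\Lambda_{tt}+\sqrt{-1}(\Lambda_{y_it_j}-\Lambda_{t_iy_j})\bigr)$ and $dz_i\otimes d\overline z_j=(dy_idy_j+dt_idt_j)+\sqrt{-1}(dt_i\otimes dy_j-dy_i\otimes dt_j)$. The real part of the product is then
\[
\tfrac14(\Lambda_{y_iy_j}+\Lambda_{t_it_j})(dy\,dy+dt\,dt)-\tfrac14(\Lambda_{y_it_j}-\Lambda_{t_iy_j})(dt\otimes dy-dy\otimes dt),
\]
and with the overall minus sign this matches the stated formula.

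I expect the main obstacle to be the bookkeeping of signs and of the factors $\frac14$ and $\frac12$, in particular confirming the sign of $\theta_i$ and the ordering in $\Lambda_{\tilde x_i\overline{\tilde z}_j}$ versus $\Lambda_{\tilde z_i\tilde x_j}$. I would verify both on the case $m=1$ with $\Gamma$ constant before trusting the general matrix computation.
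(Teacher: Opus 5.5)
Your plan follows the paper's route. The Hessian formula is just \eqref{legtrf}, which the paper derives from \eqref{KF}, \eqref{WTF1}, \eqref{WTF2} before stating the theorem, and your block-by-block check of it is correct. The metric formula comes from substituting $dw_i=\sum_k(\Phi_{ik}d\tilde x_k+\mathrm{Re}(\Gamma_{ik})d\tilde y_k+\mathrm{Im}(\Gamma_{ik})d\tilde t_k)+\sqrt{-1}\,d\tilde s_i$ into $g_M=\mathrm{Re}(h_M)$; the paper calls this a long calculation, and completing the square is a good way to organize it.

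However, the square you complete is not the right one. Your $\eta_i=dw_i-\sum_k\Gamma_{ik}\,d\overline z_k$ is not a $(1,0)$-form. So $\sum_{i,j}(\Phi^{-1})_{ij}\,\eta_i\otimes\overline\eta_j$ produces $dw_i\otimes dz_\ell$ and $d\overline z_k\otimes d\overline w_j$ terms instead of $dw_i\otimes d\overline z_\ell$ and $dz_k\otimes d\overline w_j$. It does not give $g_M$ even after taking real parts: for $m=1$, $\Phi=1$, $\Gamma=\gamma\in\R$ constant, it flips the sign of the $d\tilde s\,d\tilde t$ cross term. The block $-\Phi^{-1}\Gamma$ multiplying $d\overline z$ belongs to the conjugate factor. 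The correct choice is $\eta_i=dw_i-\sum_k\overline\Gamma_{ik}\,dz_k$, i.e.\ $\overline\eta_i=d\overline w_i-\sum_k\Gamma_{ik}\,d\overline z_k$, and with it the expansion gives exactly the four blocks of \eqref{eq:cx:hessian:leg}.

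Your sign check for $\theta_i$ is also wrong. With $\xi_i:=\sum_k\Gamma_{ik}\,d\overline{\tilde z}_k$ one has $\theta_i=\frac{\sqrt{-1}}{2}(\overline\xi_i-\xi_i)=+\mathrm{Im}(\xi_i)$, so $\xi_i=\beta_i+\sqrt{-1}\,\theta_i$. Hence the $\eta_i$ you defined equals $(\Phi\,d\tilde x)_i+\sqrt{-1}(d\tilde s_i-\theta_i)$. That would give $\Phi^{ij}(d\tilde s_i-\theta_i)\otimes(d\tilde s_j-\theta_j)$, with the wrong sign.

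The two errors cancel in your writeup. For the correct $\eta_i=dw_i-\overline\xi_i$ one gets $\eta_i=(\Phi\,d\tilde x)_i+\beta_i+\sqrt{-1}\,d\tilde s_i-\beta_i+\sqrt{-1}\,\theta_i=(\Phi\,d\tilde x)_i+\sqrt{-1}(d\tilde s_i+\theta_i)$, which is the identity you display.

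With both corrections the rest goes through as you describe:
\begin{itemize}
\item the imaginary cross terms drop out because $\Phi^{-1}$ is real;
\item the real part of the first term is $\Phi_{ij}\,d\tilde x_i\otimes d\tilde x_j+\Phi^{ij}(d\tilde s_i+\theta_i)\otimes(d\tilde s_j+\theta_j)$;
\item your computation of $-\mathrm{Re}(\Lambda_{\tilde z_i\overline{\tilde z}_j}\,dz_i\otimes d\overline z_j)$ is correct.
\end{itemize}
The $m=1$, constant-$\Gamma$ test you planned would have caught both slips.
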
 

\begin{corollary}\label{MAP}
Under the assumptions and conclusions of Theorem \ref{CH},
$${\rm Hess}_{\C}(K)\in {\rm Sp}(2m,\C) \Longleftrightarrow \begin{cases}
\textup{(i)} &\Gamma=\Gamma^{\top}\ \text{and}\\
\textup{(ii)} &\Phi=-(\Lambda_{\tilde{z}_{i}\overline{\tilde{z}}_{j}}).
\end{cases}
$$
In this situation,
$${\rm Hess}_{\C}(K)=\begin{pmatrix}
   \Phi^{-1}  & -\Phi^{-1}\Gamma   \\
   - \overline{\Gamma}\Phi^{-1}  & \Phi +\overline{\Gamma} \Phi^{-1} \Gamma
\end{pmatrix}$$
and
$$g_M = \sum_{i,j=1}^m \Phi_{ij} (d\tilde{x}_{i}\otimes d\tilde{x}_{j}+d\tilde{y}_{i}\otimes d\tilde{y}_{j} + d\tilde{t}_{i} \otimes d\tilde{t}_{j}) + \Phi^{ij}(d\tilde{s}_{i}+\theta_{i})\otimes(d\tilde{s}_{j}+\theta_{j}).$$
\end{corollary}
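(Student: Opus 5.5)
The plan is to verify the symplectic condition ${\rm Hess}_{\C}(K)^\top J\,{\rm Hess}_{\C}(K)=J$ by a direct block computation, using the expression \eqref{eq:cx:hessian:leg} from Theorem \ref{CH}. Here $J=\left(\begin{smallmatrix}0&I_m\\-I_m&0\end{smallmatrix}\right)$ and ${\rm Sp}(2m,\C)$ is defined with the ordinary transpose. Write $H={\rm Hess}_{\C}(K)$ and let $D=-(\Lambda_{\tilde z_i\overline{\tilde z}_j})+\overline\Gamma^\top\Phi^{-1}\Gamma$ be its lower right block. Since $\Phi$ is real symmetric, we have
\[
H^\top=\begin{pmatrix}\Phi^{-1}&-\Phi^{-1}\overline\Gamma\\-\Gamma^\top\Phi^{-1}&D^\top\end{pmatrix},\qquad JH=\begin{pmatrix}-\overline\Gamma^\top\Phi^{-1}&D\\-\Phi^{-1}&\Phi^{-1}\Gamma\end{pmatrix}.
\]
The matrix $H^\top JH$ is antisymmetric, so it is enough to compute its $(1,1)$, $(1,2)$ and $(2,2)$ blocks.

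The $(1,1)$ block is $\Phi^{-1}(\overline\Gamma-\overline\Gamma^\top)\Phi^{-1}$, which vanishes if and only if $\Gamma=\Gamma^\top$; this gives condition (i). The $(1,2)$ block is $\Phi^{-1}(D-\overline\Gamma\Phi^{-1}\Gamma)$. Once (i) holds, $\overline\Gamma^\top=\overline\Gamma$, so $D-\overline\Gamma\Phi^{-1}\Gamma=-(\Lambda_{\tilde z_i\overline{\tilde z}_j})$. Hence the requirement that this block equal $I_m$ is exactly condition (ii). It remains to check that the $(2,2)$ block, $-\Gamma^\top\Phi^{-1}D+D^\top\Phi^{-1}\Gamma$, vanishes automatically under (i) and (ii). For this, substitute $D=\Phi+\overline\Gamma\Phi^{-1}\Gamma$ and $D^\top=\Phi+\Gamma\Phi^{-1}\overline\Gamma$, using $\Gamma^\top=\Gamma$ and $\Phi^\top=\Phi$. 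The quartic terms $\Gamma\Phi^{-1}\overline\Gamma\Phi^{-1}\Gamma$ cancel, and what is left is $-\Gamma+\Gamma=0$. This proves both directions of the equivalence. Plugging (i) and (ii) into \eqref{eq:cx:hessian:leg} then gives the stated form of ${\rm Hess}_{\C}(K)$.

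For the metric, I would use $\partial_{\tilde z}\partial_{\overline{\tilde z}}=\frac14(\partial_{\tilde y}^2+\partial_{\tilde t}^2)+\frac{\sqrt{-1}}4(\partial_{\tilde y}\partial_{\tilde t}-\partial_{\tilde t}\partial_{\tilde y})$ with mixed indices $i,j$. This gives
\[
\Lambda_{\tilde z_i\overline{\tilde z}_j}=\tfrac14\bigl(\Lambda_{\tilde y_i\tilde y_j}+\Lambda_{\tilde t_i\tilde t_j}\bigr)+\tfrac{\sqrt{-1}}4\bigl(\Lambda_{\tilde y_i\tilde t_j}-\Lambda_{\tilde t_i\tilde y_j}\bigr).
\]
Because $\Phi$ is real, condition (ii) forces the imaginary part to vanish, i.e.\ $\Lambda_{\tilde y_i\tilde t_j}=\Lambda_{\tilde t_i\tilde y_j}$. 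It also forces the real part to satisfy $-\frac14(\Lambda_{\tilde y_i\tilde y_j}+\Lambda_{\tilde t_i\tilde t_j})=\Phi_{ij}$. Substituting these two facts into the formula for $g_M$ in Theorem \ref{CH} kills the antisymmetric $d\tilde t\otimes d\tilde y$ term and yields the displayed form of $g_M$.

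The only real obstacle is bookkeeping: one has to keep transposes and complex conjugates straight, since $H$ is Hermitian but the symplectic condition uses the plain transpose. The $(2,2)$ cancellation is the one step that genuinely needs (i) and (ii) together, so I would carry it out with particular care.
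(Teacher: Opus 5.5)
Your proposal is correct and follows essentially the same route as the paper. The paper also writes out the block conditions for ${\rm Hess}_{\C}(K)$ to be symplectic; its equations \eqref{SP1}--\eqref{SP3} are exactly your $(1,1)$, $(1,2)$ and $(2,2)$ blocks of $H^\top J H = J$. It reads off (i), then (ii), checks the converse, and gets $g_M$ from the reality of $(\Lambda_{\tilde z_i\overline{\tilde z}_j})$. Your remark that $H^\top J H$ is antisymmetric also supplies the justification, left implicit in the paper, for why those three block equations suffice.
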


\begin{proof}
The matrix \eqref{eq:cx:hessian:leg} lies in ${\rm Sp}(2m,\mathbb{C})$ if and only if
\begin{align}
\label{SP1}
   \Phi^{-1}(-\overline{\Gamma}^\top \Phi^{-1}) - (-\Phi^{-1}\overline{\Gamma})\Phi^{-1}& = 0,\\
    \label{SP2}
    \Phi^{-1}(-(\Lambda_{\tilde{z}_{i}{\overline{\tilde{z}}}_{j}}) +\overline{\Gamma}^{\top} \Phi^{-1} \Gamma) - (-\Phi^{-1}\overline{\Gamma})(-\Phi^{-1}\Gamma) & =  I_m,\\
\label{SP3}
    (-\Gamma^\top \Phi^{-1})(-(\Lambda_{\tilde{z}_{i}{\overline{\tilde{z}}}_{j}}) +\overline{\Gamma}^{\top} \Phi^{-1} \Gamma) - (-(\Lambda_{\overline{\tilde{z}}_{i}{{\tilde{z}}}_{j}}) +\Gamma^{\top} \Phi^{-1} \overline{\Gamma})(-\Phi^{-1}\Gamma) &=  0.
\end{align}
If these three equations hold, then the first one implies $\Gamma = \Gamma^\top$; and, using this, we easily deduce from the second that $\Phi=-(\Lambda_{\tilde{z}_{i}\overline{\tilde{z}}_{j}})$. Thus, (i) and (ii) hold. Conversely, (i) implies the first equation above, and (i) and (ii) together imply the second and the third.

Under these conditions, we may clearly simplify the formula for ${\rm Hess}_{\C}(K)$ as claimed. Now observe that $\Phi$ is real by construction. Thus, we conclude from (ii) that $(\Lambda_{\tilde{z}_{i}\overline{\tilde{z}}_{j}})$ is real. This implies that
$$\Lambda_{\tilde{z}_{i}\overline{\tilde{z}}_{j}}=\frac{\Lambda_{\tilde{y}_{i}\tilde{y}_{j}}+\Lambda_{\tilde{t}_{i}\tilde{t}_{j}}}{4} ,\quad  \Lambda_{\tilde{y}_{i}\tilde{t}_{j}}-\Lambda_{\tilde{t}_{i}\tilde{y}_{j}}=0.$$
Therefore, and because $\Lambda_{\tilde{z}_{i}\overline{\tilde{z}}_{j}} = -\Phi_{ij}$, the metric $g_M$ takes the stated form.
\end{proof}

\begin{remark}\label{polyharm}
  Condition (ii) of Corollary \ref{MAP} can be rewritten as 
  $$\Lambda_{\tilde{x}_i\tilde{x}_j} + 4 \Lambda_{\tilde{z}_i\overline{\tilde{z}}_j} = 0 \quad \text{for all} \quad i,j=1,\ldots,m.$$
  For a general function $\Lambda: U \to \R$ ($U \subset \R^m \times \C^m$ open) and for $(\tilde{x}_i,\tilde{z}_i)$ the standard linear coordinates on $\R^m \times \C^m$, this condition is often called \emph{polyharmonicity}, see \cite[p. 505]{Bie} and \cite[Eq. 2.1]{HKLR}. If $\Lambda$ is polyharmonic, then so are all the functions $\Phi_{ij} := \frac{1}{4} \Lambda_{\tilde{x}_i\tilde{x}_j} = \Phi_{ji}$. In the setting of Corollary \ref{MAP}, and assuming (ii), $\Phi$ is thus a symmetric, positive definite, polyharmonic matrix. This property generalizes the positivity and harmonicity of the conformal factor in the Gibbons-Hawking ansatz for $m = 1$.
\end{remark}

Corollary \ref{MAP} is a refinement of the Pedersen-Poon ansatz \cite{PP}. We are effectively characterizing the case that $(g_M,J_M)$ admits a compatible hyper-Kähler structure \emph{such that the holomorphic symplectic form $\sum dw_i \wedge dz_i$ is parallel}. This leads to the extra bit of information that $\Gamma = \Gamma^\top$.

\subsection{Semi-flat hyper-Kähler metrics: set-up}

Let $U\subset\R^{2m}$ be a domain. Let $u:U\to \R$ be a convex smooth solution of the Monge-Ampère equation \eqref{MA} such that ${\rm Hess}(u)\in\text{Sp}(2m,\R)$. Write the standard coordinates of $\mathbb{R}^{2m}$ as $x_1,y_1,\ldots,x_m,y_m$. By Lemma \ref{MAMG}, the manifold $M=U\times \R^{2m}$ is a Ricci-flat Kähler manifold with $2m$ commuting Killing vector fields $\partial_{s_i}, \partial_{t_i}$, where we write the standard coordinates on the $\mathbb{R}^{2m}$ factor as $s_1,t_1,\ldots,s_m,t_m$. Specifically, $M$ has holomorphic coordinates
$$w_{i}:=x_{i}+\sqrt{-1} s_{i},\quad    z_{i}:=y_{i}+\sqrt{-1} t_{i},$$ 
and a Kähler form $\omega_M$ defined by the Kähler potential $K(w_{i},z_{i}):=4u(x_{i},y_{i})$, i.e.
$$\omega_M := \frac{\sqrt{-1}}{2}\partial\overline\partial K,$$
giving rise to the Ricci-flat Kähler metric 
$$g_{M}:=\omega(-,J_M-).$$ 
Since $K$ is independent of the coordinates $s_{i}$ and $t_{i}$, the complex Hessian of $K$ with respect to the holomorphic coordinates $(w_{i},z_{i})$ coincides with the Hessian of $u$, i.e.
\begin{align*}
    {\rm Hess}_{\C}(K)={\rm Hess}(u)=\left(\begin{array}{cc}
       (u_{x_{i}x_{j}}) & (u_{x_{i}y_{j}}) \\
        (u_{y_{i}x_{j}}) & (u_{y_{i}y_{j}})
    \end{array}\right) \in \R^{2m\times 2m}.
\end{align*}

Fix an arbitrary point of $U$. After translating the coordinates we may assume that this is the origin $O \in \mathbb{R}^{2m}$. After subtracting an affine function from $u$, we may assume that $u(O)=0$ and $\nabla u(O) = 0$. Set $A:={\rm Hess}(u)|_{O}\in {\rm Sp}(2m,\mathbb{R})$. Since $A$ is symmetric and positive definite, we may use $A^{-1/2}$ as a linear coordinate transformation to arrange that ${\rm Hess}(u)|_O = {I}_{2m}$.

\begin{lemma}
    If $A \in {\rm Sp}(2m,\mathbb{R})$ is symmetric and positive definite, and if a function $f: \mathbb{R}^+ \to \mathbb{R}^+$ satisfies $f(\lambda^{-1}) = f(\lambda)^{-1}$ for all $\lambda \in \mathbb{R}^+$, then $f(A) \in {\rm Sp}(2m,\mathbb{R})$.
\end{lemma}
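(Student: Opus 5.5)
The plan is to work with the defining relation of the symplectic group directly, namely $A \in {\rm Sp}(2m,\mathbb{R})$ if and only if $A^\top J A = J$, where $J$ is the standard symplectic matrix. Since $A$ is symmetric and $J^{-1} = -J$, this relation can be rewritten as $J A J^{-1} = A^{-1}$ (using $A^\top = A$). The matrix $f(A)$ is symmetric as well, so it suffices to prove the analogous identity $J f(A) J^{-1} = f(A)^{-1}$.

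First, conjugation commutes with functional calculus: $J f(A) J^{-1} = f(J A J^{-1}) = f(A^{-1})$. To see this, diagonalize $A = \sum_\lambda \lambda P_\lambda$ with orthogonal spectral projections $P_\lambda$, all eigenvalues $\lambda > 0$. Then $JAJ^{-1} = \sum_\lambda \lambda\, JP_\lambda J^{-1}$. Because $J$ is orthogonal, the matrices $JP_\lambda J^{-1}$ are again orthogonal projections with the same properties. Hence $f(JAJ^{-1}) = \sum_\lambda f(\lambda) JP_\lambda J^{-1} = Jf(A)J^{-1}$.

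Second, the same spectral decomposition gives $A^{-1} = \sum_\lambda \lambda^{-1} P_\lambda$. Therefore $f(A^{-1}) = \sum_\lambda f(\lambda^{-1}) P_\lambda = \sum_\lambda f(\lambda)^{-1} P_\lambda = f(A)^{-1}$, where the middle equality is the hypothesis on $f$. The right-hand side makes sense because $f$ takes values in $\mathbb{R}^+$, so $f(A)$ is positive definite and invertible.

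Combining the two steps gives $Jf(A)J^{-1} = f(A)^{-1}$. Since $f(A)$ is symmetric, this is the same as $f(A)^\top J f(A) = J$, which is the claim. No step should be a real obstacle. The only point needing care is the first: $f$ is merely an arbitrary function on the spectrum, so the commutation of functional calculus with conjugation must be justified via spectral projections (or by interpolating $f$ on the finite spectrum by a polynomial), not by a power series.

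The intended application is $f(\lambda) = \lambda^{-1/2}$. This gives $A^{-1/2} \in {\rm Sp}(2m,\mathbb{R})$, so the normalizing coordinate change preserves the symplectic Hessian condition.
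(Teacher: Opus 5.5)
Your proof is correct, but it takes a different route from the paper.

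The paper builds an explicit normal form. It uses that $J$ maps the $\lambda$-eigenspace of $A$ to the $\lambda^{-1}$-eigenspace. From this it writes $A = Q\Lambda Q^{-1}$ with $Q \in SO(2m)$ commuting with $J$ and $\Lambda = \mathrm{diag}(\lambda_1,\lambda_1^{-1},\ldots,\lambda_m,\lambda_m^{-1})$. It then checks $f(\Lambda)J = J f(\Lambda)^{-1}$ directly on the diagonal matrix and conjugates back by $Q$.

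You encode the same eigenvalue-pairing fact in the single identity $JAJ^{-1} = A^{-1}$. After that you only need two properties of the functional calculus: it commutes with conjugation by the orthogonal matrix $J$, and it sends $A^{-1}$ to $\sum_\lambda f(\lambda^{-1})P_\lambda$.

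Your argument is shorter and basis-free. It never constructs a $J$-adapted orthonormal eigenbasis. The paper asserts that basis without detail, and building it needs some care when eigenspaces have multiplicity, especially the $J$-invariant eigenspace for $\lambda = 1$, where one must pick a complex-orthonormal basis.

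The paper's route, in exchange, exhibits the symplectic diagonal form of $A$ explicitly, with eigenvalues in reciprocal pairs swapped by $J$. That structure is informative but not needed for the lemma itself.
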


\begin{proof}
    Since $A$ is symplectic, we have that $A^\top J A = J$ for the standard complex structure $J$ on $\mathbb{R}^{2m}$. Thus, if $v$ is an eigenvector of $A$ with eigenvalue $\lambda$, then $Jv$ is an eigenvector of $A$ with eigenvalue $\lambda^{-1}$. Thus, we can write $A = Q \Lambda Q^{-1}$, where $Q \in SO(2m)$ commutes with $J$ and $$\Lambda = {\rm diag}(\lambda_1,\lambda_1^{-1},\ldots, \lambda_m, \lambda_m^{-1}), \quad \lambda_1,\ldots,\lambda_m > 0.$$
In particular, $\Lambda J = J \Lambda^{-1}$, and hence 
$f(\Lambda)J = J f(\Lambda^{-1}) = J f(\Lambda)^{-1}$.
Using this and the property that $QJ = JQ$, one easily checks that $f(A) = Q f(\Lambda) Q^{-1} \in {\rm Sp}(2m,\mathbb{R})$, as desired.
\end{proof}

Thus, the linear map $A^{-1/2}$ used to normalize ${\rm Hess}(u)|_O$ is itself symplectic, so this normalization preserves the condition ${\rm Hess}(u) \in {\rm Sp}(2m,\mathbb{R})$ everywhere. Thus, without loss of generality,
\begin{equation}\label{symplectic}
u(O)=0, \quad (\nabla u)(O)=0, \quad  \operatorname{Hess}(u)|_{O}=I_{2m}, \quad \operatorname{Hess}(u)\in{\rm Sp}(2m,\R).
\end{equation}

For the rest of this section, all results are considered over the ball $B_{\delta(O)}(O)\subset U$, where, following Calabi's notation from \cite{EC}, $\delta$ denotes the affine distance to $\partial U$, i.e. $\delta(x) := {\rm dist}_{g|_x}(x,\partial U)$ for all $x \in U$. Because of our normalization \eqref{symplectic}, $\delta(O)$ is simply the standard Euclidean distance from $O$ to $\partial U$.

Fix the set of $m$ holomorphic Killing vector fields $\partial_{s_i}$, $i = 1, \ldots, m$. By Theorem \ref{CH}, and using the fact that $K$ is independent of the coordinates $s_{i},t_{i}$, the partial Legendre transform $\Lambda/4$ of $K/4$ with respect to the coordinates $x_i = {\rm Re}(w_i)$ is defined in the new coordinates $\tilde{x}_{i},\tilde{y}_{i}$,
$$\tilde{x}_{i}:=\frac{K_{x_i}}{4} = \frac{K_{w_{i}}}{2}=u_{x_{i}},\quad  \tilde{y}_{i}:=y_{i},\quad  \frac{\Lambda}{4} := -\frac{K}{4} + \sum_{i=1}^m \tilde{x}_i x_i.$$
Notice that the map
\begin{equation}\label{mapT1}
T_{1}:B_{\delta(O)}(O)\to T_{1}(B_{\delta(O)}(O)),  \quad (\tilde{x}_{i},\tilde{y}_{i}) = T_{1}(x_{i},y_{i}) := (u_{x_{i}},y_{i}),
\end{equation}
is in fact a diffeomorphism: its Jacobian is invertible,
\begin{align}J_{T_{1}}=\begin{pmatrix}
    u_{x_{i}x_{j}} & u_{x_{i}y_{j}}  \\
     0 & I_{m}
\end{pmatrix}=\begin{pmatrix}
    \Phi^{-1} & -\Phi^{-1}\Gamma  \\
    0 & I_{m} \end{pmatrix}, \quad \label{JacT1} (J_{T_{1}})^{-1}=\begin{pmatrix}
    \Phi & \Gamma \\
     0 & I_{m}  
\end{pmatrix},\end{align}
with the matrices $\Phi,\Gamma$ from Theorem \ref{CH} and Corollary \ref{MAP},
\begin{align}\label{ph}\Phi=\frac{1}{4}(\Lambda_{\tilde{x}_{i}\tilde{x}_{j}}) = -(\Lambda_{\tilde{z}_{i}\overline{\tilde{z}}_{j}}), \quad \tilde{z}_i := z_i, \quad  \Gamma=\frac{1}{4}(\Lambda_{\tilde{x}_{i}\tilde{y}_{j}}) = \Gamma^\top,\end{align}
which allow us to write 
$${\rm Hess}(u) = {\rm Hess}_{\mathbb{C}}(K) = \begin{pmatrix}
   \Phi^{-1}  & -\Phi^{-1}\Gamma   \\
   - {\Gamma}\Phi^{-1}  & \Phi +{\Gamma} \Phi^{-1} \Gamma
\end{pmatrix}.$$
Also, because $\Phi^{-1}$ is positive definite and $B_{\delta(O)}(O)$ is convex in the $x_i$-directions, $T_1$ is injective.

\begin{lemma}\label{alpha}
    In our setting, $\Lambda$ is pluriharmonic in the coordinates $\alpha_i := \tilde{x}_i + \sqrt{-1}\tilde{y}_i$.
\end{lemma}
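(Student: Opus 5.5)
The plan is to read off pluriharmonicity directly from Corollary \ref{MAP}, using that in the present setting $\Lambda$ does not depend on the imaginary parts $\tilde t_i$. Since ${\rm Hess}_{\C}(K)={\rm Hess}(u)\in{\rm Sp}(2m,\R)\subset{\rm Sp}(2m,\C)$ at every point of $B_{\delta(O)}(O)$, Corollary \ref{MAP} applies there. It gives the two identities (i) $\Gamma=\Gamma^\top$ and (ii) $\Phi=-(\Lambda_{\tilde z_i\overline{\tilde z}_j})$, already recorded in \eqref{ph}.

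First I will check that $\Lambda$ is a well-defined smooth function of $(\tilde x_i,\tilde y_i)$ alone on $T_1(B_{\delta(O)}(O))$. Indeed, $T_1$ is a diffeomorphism onto its image, by \eqref{JacT1} and the injectivity remark. Moreover $\Lambda=-K+4\sum_i\tilde x_i x_i$, where $K=4u$ and $x_i=x_i(\tilde x,\tilde y)$ is obtained by inverting $T_1$. None of these quantities involves $s_i$ or $t_i$, so $\Lambda_{\tilde t_i}\equiv 0$, and $\Lambda$ is real-valued. Consequently, with $\tilde z_j=\tilde y_j+\sqrt{-1}\tilde t_j$,
$$\Lambda_{\tilde z_i\overline{\tilde z}_j}=\tfrac14\Lambda_{\tilde y_i\tilde y_j},\qquad \Gamma_{ij}=\tfrac12\Lambda_{\tilde x_i\overline{\tilde z}_j}=\tfrac14\Lambda_{\tilde x_i\tilde y_j}.$$

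Next I expand the complex Hessian in the coordinates $\alpha_i=\tilde x_i+\sqrt{-1}\tilde y_i$, using $\partial_{\alpha}=\frac12(\partial_{\tilde x}-\sqrt{-1}\partial_{\tilde y})$ and $\partial_{\overline\alpha}=\frac12(\partial_{\tilde x}+\sqrt{-1}\partial_{\tilde y})$:
$$4\Lambda_{\alpha_i\overline{\alpha}_j}=\Lambda_{\tilde x_i\tilde x_j}+\Lambda_{\tilde y_i\tilde y_j}+\sqrt{-1}\,\bigl(\Lambda_{\tilde x_i\tilde y_j}-\Lambda_{\tilde y_i\tilde x_j}\bigr).$$
The imaginary part equals $4(\Gamma_{ij}-\Gamma_{ji})$, which vanishes by (i). For the real part, (ii) together with the formula above gives $\frac14\Lambda_{\tilde x_i\tilde x_j}=\Phi_{ij}=-\frac14\Lambda_{\tilde y_i\tilde y_j}$, so the real part also vanishes. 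Hence $\Lambda_{\alpha_i\overline\alpha_j}=0$ for all $i,j$, which is pluriharmonicity.

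I expect essentially no obstacle here. The only points needing care are bookkeeping ones: confirming that the $\tilde z$-derivatives in Corollary \ref{MAP} reduce to $\tilde y$-derivatives because $\Lambda$ is independent of $\tilde t$, and that the domain on which everything is defined is the full image $T_1(B_{\delta(O)}(O))$ rather than a shrunken neighbourhood. The latter is ensured by the global injectivity of $T_1$ established just before the statement.
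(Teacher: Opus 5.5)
Your proposal is correct and matches the paper's proof: expand $4\Lambda_{\alpha_i\overline{\alpha}_j}$ into real second derivatives, then kill the real part with $\Phi=\frac14(\Lambda_{\tilde x_i\tilde x_j})=-(\Lambda_{\tilde z_i\overline{\tilde z}_j})$ and the imaginary part with $\Gamma=\Gamma^\top$, which is exactly what the paper cites as \eqref{ph}. Your extra bookkeeping makes explicit what the paper uses implicitly in writing \eqref{ph}: the $\tilde t_i$-independence of $\Lambda$, giving $\Lambda_{\tilde z_i\overline{\tilde z}_j}=\frac14\Lambda_{\tilde y_i\tilde y_j}$ and $\Gamma=\frac14(\Lambda_{\tilde x_i\tilde y_j})$, and the global injectivity of $T_1$ on the ball.
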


\begin{proof}
Notice that for all $k,\ell=1,\ldots,m$,
\begin{align*}
\frac{\partial^{2}\Lambda}{\partial \alpha_{k}\partial\overline{\alpha}_{\ell}} & =  \displaystyle\frac{1}{4}\left( \frac{\partial}{\partial \tilde{x}_{k}}-\sqrt{-1}\frac{\partial}{\partial \tilde{y}_{k}} \right)\left( \frac{\partial \Lambda}{\partial \tilde{x}_{\ell}}+\sqrt{-1}\frac{\partial \Lambda}{\partial \tilde{y}_{\ell}} \right)\\
& =  \displaystyle \frac{1}{4} \left(\frac{\partial^2 \Lambda}{\partial\tilde{x}_{k}\partial \tilde{x}_{\ell}}+\frac{\partial^2 \Lambda}{\partial \tilde{y}_{k}\partial \tilde{y}_{\ell}}\right)+\frac{\sqrt{-1}}{4}\left(\frac{\partial^2 \Lambda}{\partial \tilde{x}_{k}\partial \tilde{y}_{\ell}}-\frac{\partial^2 \Lambda}{\partial  \tilde{y}_{k} \partial\tilde{x}_{\ell}}\right). 
\end{align*}
This vanishes by \eqref{ph}.
\end{proof}

\begin{lemma}\label{Zholo}
    In our setting, $Z := \Gamma+\sqrt{-1}\Phi$ is holomorphic in the coordinates $\alpha_i$.
\end{lemma}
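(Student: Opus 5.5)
The plan is to verify the Cauchy--Riemann equations for each entry $Z_{ij} = \Gamma_{ij} + \sqrt{-1}\Phi_{ij}$ directly. With $\alpha_k = \tilde{x}_k + \sqrt{-1}\tilde{y}_k$ we have
$$\frac{\partial Z_{ij}}{\partial \overline{\alpha}_k} = \frac12\left(\frac{\partial}{\partial \tilde{x}_k}+\sqrt{-1}\frac{\partial}{\partial \tilde{y}_k}\right)(\Gamma_{ij}+\sqrt{-1}\Phi_{ij}),$$
so it suffices to show, for all $i,j,k$,
$$\partial_{\tilde{x}_k}\Gamma_{ij} = \partial_{\tilde{y}_k}\Phi_{ij}, \qquad \partial_{\tilde{x}_k}\Phi_{ij} = -\partial_{\tilde{y}_k}\Gamma_{ij}.$$
By \eqref{ph} we have $\Phi_{ij} = \frac14\Lambda_{\tilde{x}_i\tilde{x}_j}$ and $\Gamma_{ij} = \frac14\Lambda_{\tilde{x}_i\tilde{y}_j}$. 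Both conditions therefore become identities among third derivatives of $\Lambda$, which is smooth on $T_1(B_{\delta(O)}(O))$.

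First I record that $\Lambda$ depends only on $(\tilde{x},\tilde{y})$. Indeed $K = 4u$ is independent of $s_i, t_i$, and $\tilde{t}_i = t_i$, so $\Lambda_{\tilde{t}_i}=0$. Consequently $\Lambda_{\tilde{z}_i\overline{\tilde{z}}_j} = \frac14\Lambda_{\tilde{y}_i\tilde{y}_j}$.

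For the first equation I use condition (i) of Corollary \ref{MAP}, i.e. $\Gamma=\Gamma^\top$, which reads $\Lambda_{\tilde{x}_k\tilde{y}_j} = \Lambda_{\tilde{x}_j\tilde{y}_k}$. Differentiating this in $\tilde{x}_i$ and commuting partial derivatives gives $\Lambda_{\tilde{x}_i\tilde{y}_j\tilde{x}_k} = \Lambda_{\tilde{x}_i\tilde{x}_j\tilde{y}_k}$. This is exactly $\partial_{\tilde{x}_k}\Gamma_{ij}=\partial_{\tilde{y}_k}\Phi_{ij}$.

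For the second equation I use condition (ii), in the form of Remark \ref{polyharm} combined with $\Lambda_{\tilde{t}}=0$. This gives $\Lambda_{\tilde{x}_j\tilde{x}_k} + \Lambda_{\tilde{y}_j\tilde{y}_k} = 0$ for all $j,k$, which is the same identity underlying Lemma \ref{alpha}. Differentiating in $\tilde{x}_i$ yields $\Lambda_{\tilde{x}_i\tilde{x}_j\tilde{x}_k} = -\Lambda_{\tilde{x}_i\tilde{y}_j\tilde{y}_k}$, which is $\partial_{\tilde{x}_k}\Phi_{ij} = -\partial_{\tilde{y}_k}\Gamma_{ij}$.

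The only point needing care is bookkeeping rather than a real obstacle. One must be sure that \eqref{ph} really holds with $\Gamma$ real and symmetric, which uses the Sp-condition via Corollary \ref{MAP}. One must also use the $t$-independence of $\Lambda$, so that polyharmonicity reduces to ordinary harmonicity-type relations in the $(\tilde{x},\tilde{y})$ variables. Once these are in place, the argument is just the two differentiations above.
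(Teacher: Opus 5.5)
Your proof is correct and is essentially the paper's argument: both check the Cauchy--Riemann equations entrywise, as identities among third derivatives of $\Lambda$ obtained from \eqref{ph}. The only difference is in the equation $\partial_{\tilde{x}_k}\Gamma_{ij}=\partial_{\tilde{y}_k}\Phi_{ij}$, where you invoke the symmetry $\Gamma=\Gamma^\top$, while the paper passes through $\Lambda_{\tilde{y}_i\tilde{y}_j\tilde{y}_k}$ by using the relation $\Lambda_{\tilde{x}_a\tilde{x}_b}=-\Lambda_{\tilde{y}_a\tilde{y}_b}$ twice; the other equation is handled identically.
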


\begin{proof}
    By \eqref{ph}, we have that
$$\frac{\partial}{\partial \tilde{x}_{k}}\Gamma_{ij}= \displaystyle\frac{1}{4}\Lambda_{\tilde{x}_{i}\tilde{y}_{j}\tilde{x}_{k}}
     = -\displaystyle\frac{1}{4}\Lambda_{\tilde{y}_{i}\tilde{y}_{j}\tilde{y}_{k}} 
    = \displaystyle\frac{1}{4}\Lambda_{\tilde{x}_{i}\tilde{x}_{j} \tilde{y}_{k}}
    = \displaystyle \frac{\partial}{\partial \tilde{y}_{k}} \Phi_{ij},
$$
$$\frac{\partial}{\partial \tilde{y}_{k}}\Gamma_{ij}  = \displaystyle\frac{1}{4}\Lambda_{\tilde{x}_{i}\tilde{y}_{j}\tilde{y}_{k}}
     = - \displaystyle\frac{1}{4}\Lambda_{\tilde{x}_{i}\tilde{x}_{j}\tilde{x}_{k}}
     = - \displaystyle \frac{\partial}{\partial \tilde{x}_{k}} \Phi_{ij}.
$$
Thus, every component function $Z_{ij} = \Gamma_{ij} +\sqrt{-1}\Phi_{ij}$ is holomorphic with respect to $\alpha_k$. 
\end{proof}

\begin{definition}\label{def:Siegel}
The \emph{Siegel upper half-plane} and the \emph{Siegel disk} are defined by
\begin{align*}\mathfrak{H}_{m}&:=\{Z\in \C^{m\times m}: Z=Z^{\top},\ \mathrm{Im}(Z)>0\},\\
\mathbb{D}_m &:=\{W\in \C^{m\times m}: W=W^{\top} ,\ I_{m}-W\overline{W}^{\top}>0 \}.
\end{align*}
The \emph{Cayley transformation} is the biholomorphism
$$\mathscr{C}:\mathfrak{H}_{m}\to \mathbb{D}_{m}, \quad Z \mapsto (Z-\sqrt{-1}I_{m})(Z+\sqrt{-1}I_{m})^{-1}.$$
\end{definition}

\begin{remark}\label{bounded}  
The Siegel disk $\mathbb{D}_m$ is a bounded domain: for all $W\in \mathbb{D}_{m}$,
$$\|W\|_F = \sqrt{\mathrm{tr}(W\overline{W}^{\top})} < \sqrt{m}.$$
\end{remark}
    
\begin{remark}
The same procedure can also be carried out by fixing the set of holomorphic Killing vector fields $\partial_{t_i}$, $i = 1,\ldots, m$. Thus, we construct a function $\Lambda^{*}/4$ as the Legendre transform of $K/4$ with respect to the coordinates $y_{i}$, and $\Lambda^*/4$ is defined in the new coordinates
$$\hat{x}_{i}:=x_{i},\quad  \hat{y}_{i}:=\frac{K_{{\rm Re}(z_i)}}{4} = \frac{K_{z_{i}}}{2}=u_{y_{i}}.$$
These new coordinates are given by the diffeomorphism
\begin{align}\label{mapT2}T_{2}: B_{\delta(O)}(O)\to T_{2}(B_{\delta(O)}(O)), \quad (\hat{x}_i, \hat{y}_i) = T_2(x_i,y_i) := (x_{i},u_{y_{i}}).\end{align}
We obtain symmetric real matrices
$$\Phi^* := -\frac{1}{4}(\Lambda^{*}_{\hat{x}_{i}\hat{y}_{j}}),\quad  \Gamma^* :=\frac{1}{4}(\Lambda^{*}_{\hat{y}_{i}\hat{y}_{j}}).$$
Then $\Lambda^{*}$ is pluriharmonic with respect to the coordinates $\beta_{i}:=\hat{x}_{i}+\sqrt{-1}\hat{y}_{i}$ on the image of $T_2$, and the $\mathfrak{H}_m$-valued map $Z^*:=\Phi^*+\sqrt{-1}\Gamma^*$ is holomorphic with respect to these coordinates.
\end{remark}

We now introduce a third change of variables, which will play a central role in the remainder of this section. Recalling \eqref{mapT1} and \eqref{mapT2}, define $T_{3}: B_{\delta(O)}(O)\to T_{3}(B_{\delta(O)}(O))$ by
\begin{equation}\label{T3m}
     T_{3}(x_{i},y_{i}) := T_{1}(x_{i},y_{i})+T_{2}(x_{i},y_{i})= (u_{x_{i}},y_{i})+(x_{i},u_{y_{i}})= (x_{i}+u_{x_{i}},y_{i}+u_{y_{i}}).
\end{equation}
This map has Jacobian
\begin{align}\label{JacT3}J_{T_{3}}=\left(\begin{array}{cc}
   I_{m}+(u_{x_{i}x_{j}})  & (u_{x_{i}y_{j}})  \\
    (u_{y_{i}x_{j}}) & I_{m}+(u_{y_{i}y_{j}})
\end{array}\right) = I_{2m} + {\rm Hess}(u).\end{align}
As before, we conclude that $T_3$ is a diffeomorphism onto its image. On the image of $T_3$ we introduce the associated complex coordinates $\sigma_i$, i.e.
\begin{align}\label{def_sigma}
\sigma_{i} \circ T_3:=(x_{i}+u_{x_{i}})+\sqrt{-1}(y_{i}+u_{y_{i}}).
 \end{align}
    For $m=1$ we recover the coordinate $\sigma$ introduced by Lewy \cite{Lewy1,Lewy2} and Nitsche \cite{Nit} in their studies of the real Monge-Ampère equation on $\mathbb{R}^2$.

\begin{lemma}\label{compose}
    The composition $T_{3}\circ T^{-1}_{1}$ is a biholomorphism from $T_1(B_{\delta(O)}(O)) \subset \mathbb{R}^{2m}$, equipped with the $\alpha$-coordinates, onto $T_3(B_{\delta(O)}(O)) \subset \mathbb{R}^{2m}$, equipped with the $\sigma$-coordinates.
\end{lemma}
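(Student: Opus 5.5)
We need to show that $T_3\circ T_1^{-1}$ is holomorphic as a map from the $\alpha$-coordinates to the $\sigma$-coordinates, and that it is a bijection onto its image with holomorphic inverse. Both $T_1$ and $T_3$ are already known to be diffeomorphisms onto their images: $T_1$ because $\Phi^{-1}>0$ and the ball is convex in the $x$-directions, and $T_3$ by the analogous remark. So the composition is a diffeomorphism, and only holomorphicity must be checked. Once that is done, the inverse of a holomorphic diffeomorphism is holomorphic, which gives a biholomorphism.

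\textbf{Step 1: the real Jacobian.} By the chain rule,
$$J_{T_3\circ T_1^{-1}} = J_{T_3}\,(J_{T_1})^{-1} = (I_{2m}+{\rm Hess}(u))\begin{pmatrix}\Phi & \Gamma\\ 0 & I_m\end{pmatrix}.$$
We substitute the block form
$${\rm Hess}(u)=\begin{pmatrix}\Phi^{-1} & -\Phi^{-1}\Gamma\\ -\Gamma\Phi^{-1} & \Phi+\Gamma\Phi^{-1}\Gamma\end{pmatrix}$$
obtained from Corollary \ref{MAP} together with $\Gamma=\Gamma^\top$ from \eqref{ph}. Multiplying out gives:
\begin{itemize}
\item top-left block: $\Phi+I_m$;
\item top-right block: $\Gamma+\Phi^{-1}\Gamma-\Phi^{-1}\Gamma=\Gamma$;
\item bottom-left block: $-\Gamma\Phi^{-1}\Phi=-\Gamma$;
\item bottom-right block: $-\Gamma\Phi^{-1}\Gamma+I_m+\Phi+\Gamma\Phi^{-1}\Gamma=I_m+\Phi$.
\end{itemize}
Hence
$$J_{T_3\circ T_1^{-1}}=\begin{pmatrix} I_m+\Phi & \Gamma\\ -\Gamma & I_m+\Phi\end{pmatrix}.$$

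\textbf{Step 2: Cauchy--Riemann equations.} With real coordinates ordered as $(\tilde x,\tilde y)$ and $(\sigma$ real part, imaginary part$)$, a map is holomorphic exactly when its Jacobian has the form $\begin{pmatrix}P & -Q\\ Q & P\end{pmatrix}$. Its complex derivative is then $P+\sqrt{-1}Q$. The matrix from Step 1 has this form with $P=I_m+\Phi$ and $Q=-\Gamma$, so the complex derivative is
$$\frac{\partial\sigma}{\partial\alpha}=I_m+\Phi-\sqrt{-1}\Gamma=I_m-\sqrt{-1}Z,$$
with $Z=\Gamma+\sqrt{-1}\Phi$. This is consistent with Lemma \ref{Zholo}. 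The complex derivative is invertible, since $\det(I_m+\Phi-\sqrt{-1}\Gamma)\neq0$ because $\Phi>0$; alternatively, the real Jacobian is already known to be invertible.

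\textbf{Where care is needed.} The only real hazard is bookkeeping: matching the ordering of real coordinates and the sign conventions in $(J_{T_1})^{-1}$, in particular the sign of $\Gamma$ in the Hessian. Symmetry of $\Gamma$ is what makes the off-diagonal blocks cancel correctly. Everything else is a formal consequence of the earlier results.
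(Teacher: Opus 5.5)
Your proposal is correct and is essentially the paper's proof. The paper likewise computes $J_{T_3}(J_{T_1})^{-1}$ blockwise from \eqref{JacT1} and \eqref{JacT3}, obtains $\begin{pmatrix} I_m+\Phi & \Gamma\\ -\Gamma & I_m+\Phi\end{pmatrix}$, and observes that this matrix commutes with the standard complex structure, which is exactly your Cauchy--Riemann check. Your extra remarks (invertibility of $I_m+\Phi-\sqrt{-1}\Gamma$, and holomorphicity of the inverse) make explicit what the paper leaves implicit, and your $\partial\sigma/\partial\alpha$ matches the formula used in the proof of Lemma \ref{Holchart1}.
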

    
\begin{proof}
By definition this means that the Jacobian $J_{T_3 \circ T_1^{-1}}$ commutes with
$$\begin{pmatrix}
    0 & -I_{m}  \\
     I_{m} & 0
\end{pmatrix},$$
the standard complex structure of $\mathbb{R}^{2m}$. Using \eqref{JacT1} and \eqref{JacT3} we compute
\begin{align*}
  J_{T_{3}\circ T^{-1}_{1}}    =  J_{T_{3}}\circ (J_{T_{1}})^{-1}
      = \begin{pmatrix}
        I_{m}+\Phi^{-1}  & -\Phi^{-1}\Gamma  \\
        -\Gamma\Phi^{-1}  & I_m+\Phi+\Gamma\Phi^{-1}\Gamma 
    \end{pmatrix}\begin{pmatrix}
         \Phi & \Gamma  \\
          0 & I_{m}
     \end{pmatrix}
     &= \begin{pmatrix}
         I_{m}+\Phi & \Gamma  \\
          -\Gamma & I_{m}+\Phi
   \end{pmatrix}.
     \end{align*}
Then the desired property is easy to check.
\end{proof}

\begin{lemma}\label{Holchart1}
    The vector-valued map
    \begin{align}\label{def_f}f:T_{3}(B_{\delta(O)}(O))\to \C^{m}, \quad f_{i} \circ T_3 :=(x_{i}-u_{x_{i}})-\sqrt{-1}(y_{i}-u_{y_{i}}),\end{align}
    is holomorphic with respect to the coordinates $\sigma_i$. Moreover,
    $$\frac{\partial f}{\partial \sigma}=\mathscr{C} \circ Z \circ T_1 \circ T_3^{-1},$$
    where $Z: T_1(B_{\delta(O)}(O)) \to \mathfrak{H}_m$ is as in Lemma \ref{Zholo} and $\mathscr{C}: \mathfrak{H}_m \to \mathbb{D}_m$ is the Cayley transformation.
\end{lemma}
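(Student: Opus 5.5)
The plan is to verify holomorphicity and compute $\partial f/\partial\sigma$ in the $\alpha$-coordinates, and then transfer the result to the $\sigma$-coordinates using Lemma \ref{compose}. Set $F := f\circ T_3\circ T_1^{-1}$, defined on $T_1(B_{\delta(O)}(O))$. Since $T_3\circ T_1^{-1}$ is a biholomorphism by Lemma \ref{compose}, $f$ is holomorphic in $\sigma$ if and only if $F$ is holomorphic in $\alpha$. In that case the chain rule gives
$$\frac{\partial f}{\partial\sigma}\circ T_3\circ T_1^{-1} = \frac{\partial F}{\partial\alpha}\Bigl(\frac{\partial\sigma}{\partial\alpha}\Bigr)^{-1}.$$

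First I read off $\partial\sigma/\partial\alpha$ from the real Jacobian computed in the proof of Lemma \ref{compose}. A real $2m\times 2m$ matrix $\bigl(\begin{smallmatrix} a & -b\\ b & a\end{smallmatrix}\bigr)$ represents the complex matrix $a+\sqrt{-1}b$. The Jacobian there is $\bigl(\begin{smallmatrix} I+\Phi & \Gamma\\ -\Gamma & I+\Phi\end{smallmatrix}\bigr)$, so $a=I+\Phi$ and $b=-\Gamma$, which gives
$$\frac{\partial\sigma}{\partial\alpha} = I+\Phi-\sqrt{-1}\Gamma = I-\sqrt{-1}Z.$$

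Next I compute the real Jacobian of $F$ with respect to $(\tilde x,\tilde y)$. The real part of $F$ is $x-u_x$ and the imaginary part is $-(y-u_y)$. By \eqref{JacT1}, the derivative of $x$ with respect to $(\tilde x,\tilde y)$ is $(\Phi,\Gamma)$. We also have $u_x=\tilde x$ and $y=\tilde y$. For $u_y$, I multiply the lower block row of ${\rm Hess}(u)$, namely $(-\Gamma\Phi^{-1},\ \Phi+\Gamma\Phi^{-1}\Gamma)$, by $(J_{T_1})^{-1}$; this gives $(-\Gamma,\ \Phi)$. Assembling these, the real Jacobian of $F$ is
$$\begin{pmatrix}\Phi-I & \Gamma\\ -\Gamma & \Phi-I\end{pmatrix}.$$
This has the Cauchy--Riemann form, so $F$ is holomorphic in $\alpha$, with
$$\frac{\partial F}{\partial\alpha} = \Phi-I-\sqrt{-1}\Gamma = -I-\sqrt{-1}Z.$$
The symmetry $\Gamma=\Gamma^\top$ from \eqref{ph} is used here to identify the off-diagonal blocks.

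Finally,
$$\frac{\partial f}{\partial\sigma} = -(I+\sqrt{-1}Z)(I-\sqrt{-1}Z)^{-1},$$
evaluated at the point $T_1\circ T_3^{-1}$. Writing $I+\sqrt{-1}Z=\sqrt{-1}(Z-\sqrt{-1}I)$ and $I-\sqrt{-1}Z=-\sqrt{-1}(Z+\sqrt{-1}I)$, this becomes $(Z-\sqrt{-1}I)(Z+\sqrt{-1}I)^{-1}=\mathscr{C}(Z)$. The factors commute because they are polynomials in $Z$. The inverse exists because $Z\in\mathfrak{H}_m$: $Z$ is symmetric by \eqref{ph}, and ${\rm Im}\,Z=\Phi>0$.

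The main obstacle is purely bookkeeping: getting the sign and orientation conventions right, namely the minus sign in the imaginary part of $f$ and the identification of real block matrices with complex ones. A single sign slip would turn the Cayley transform into its inverse or conjugate. Before the final algebra, I would double-check by evaluating at $O$, where $\Phi=I$ and $\Gamma=0$, so $Z=\sqrt{-1}I$ and the claimed derivative must vanish.
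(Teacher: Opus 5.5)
Your proof is correct and follows the paper's strategy: you pass to the $\alpha$-coordinates via Lemma \ref{compose}, read off $\partial\sigma/\partial\alpha = I+\Phi-\sqrt{-1}\Gamma$ from the Jacobian there, and finish with the same Cayley-transform algebra. The only difference is that the paper observes directly from the definitions that $f=\sigma-2\alpha$, which gives holomorphicity and $\partial f/\partial\sigma = I-2(I+\Phi-\sqrt{-1}\Gamma)^{-1}$ at once; this makes your separate Jacobian computation and Cauchy--Riemann check for $F$ unnecessary, although that computation correctly gives $\partial F/\partial\alpha = (I-\sqrt{-1}Z)-2I$.
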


This again generalizes a result of Nitsche \cite{Nit} to higher dimensions. For $m = 1$ Nitsche proved by a direct computation that $|f'(\sigma)| < 1$, which is the key to his proof of Bernstein's theorem on $\mathbb{R}^2$.

\begin{proof}
    Recall the $\alpha$-coordinates from Lemma \ref{alpha}. By definition,
    $$f(\sigma)=\sigma-2\alpha.$$
    Thus, by Lemma \ref{compose}, $f$ is holomorphic with respect to $\sigma$. We will now abuse notation by identifying $Z=\Gamma+\sqrt{-1}\Phi$ with $Z \circ T_1 \circ T_3^{-1}$. Then, from the proof of Lemma \ref{compose},
    \begin{align*}\frac{\partial f}{\partial \sigma}=I_{m}-2\frac{\partial \alpha}{\partial \sigma} &= I_m - 2(\Phi+I_{m}-\sqrt{-1}\Gamma)^{-1}\\
         & =  (\Phi+I_{m}-\sqrt{-1}\Gamma-2I_{m})(\Phi+I_{m}-\sqrt{-1}\Gamma)^{-1}\\
         & =  ((\Gamma+\sqrt{-1}\Phi)-\sqrt{-1}I_{m})((\Gamma+\sqrt{-1}\Phi)+\sqrt{-1}I_{m})^{-1}\\
         & =  (Z-\sqrt{-1}I_{m})(Z+\sqrt{-1}I_{m})^{-1},
    \end{align*}
    which is by definition equal to $\mathscr{C} \circ Z$.
\end{proof}

\subsection{Semi-flat hyper-Kähler metrics: estimates and main result}

We now turn to the proof of Theorem \ref{thm:main}. We start by recalling a classical lemma which is implicit in \cite{Nit}.

\begin{lemma}\label{DT}
Let $u:D\subset \R^{n}\to \R$ be a smooth convex function on a convex domain. Define 
$$T: D \to \mathbb{R}^n, \quad T(p) := p + (\nabla u)(p)\;\,\text{for all}\;\,p \in D.$$ Then $T$ is a diffeomorphism onto its image. Moreover, if $O\in D$ and if $\nabla u(O)=0$, then for every Euclidean ball $B_{r}(O)\subset D$ we have that $B_{r}(O)\subset T(B_r(O))$.
\end{lemma}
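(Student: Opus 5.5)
The proof uses monotonicity of the gradient of a convex function. Since $u$ is convex and smooth on the convex domain $D$, the gradient map is monotone:
$$\langle \nabla u(p)-\nabla u(q),\,p-q\rangle \geq 0 \quad \text{for all } p,q\in D.$$
This follows by integrating $\langle {\rm Hess}(u)(q+t(p-q))(p-q),\,p-q\rangle \geq 0$ over $t\in[0,1]$.

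\emph{Diffeomorphism onto the image.} From monotonicity,
$$\langle T(p)-T(q),\,p-q\rangle \geq |p-q|^2,$$
so by Cauchy--Schwarz $|T(p)-T(q)|\geq |p-q|$. Hence $T$ is injective. Its Jacobian $I_n+{\rm Hess}(u)$ is positive definite, hence invertible. By the inverse function theorem $T$ is therefore a local diffeomorphism, and being injective it is a diffeomorphism onto the open set $T(D)$. Its inverse is $1$-Lipschitz.

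\emph{Ball containment.} Assume $\nabla u(O)=0$ and $B_r(O)\subset D$. Then $T(O)=O$, and monotonicity with $q=O$ gives
$$\langle T(p),p\rangle \geq |p|^2, \quad\text{so}\quad |T(p)|\geq |p|.$$
Fix $y$ with $|y|<r$. I would show $y\in T(B_r(O))$ by minimizing
$$\phi(p):=\tfrac12|p|^2+u(p)-\langle y,p\rangle$$
over the closed ball $\overline{B_\rho(O)}$, where $|y|<\rho<r$. This closed ball lies in $D$, so $\phi$ is continuous there and attains its minimum at some $p_0$.

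Suppose $|p_0|=\rho$. Then the inward direction $-p_0$ must be non-descending, i.e. $\langle\nabla\phi(p_0),p_0\rangle\leq 0$. But
$$\langle\nabla\phi(p_0),p_0\rangle = \langle T(p_0)-y,\,p_0\rangle \geq |p_0|^2-|y||p_0| = \rho(\rho-|y|)>0,$$
a contradiction. So $p_0$ is an interior point, and $\nabla\phi(p_0)=T(p_0)-y=0$, i.e. $y=T(p_0)$ with $|p_0|<\rho<r$.

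An alternative topological route would use invariance of domain and properness of $T$ restricted to $B_r(O)$ near the boundary. The variational argument avoids that and is the cleanest.

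The only delicate point is that $D$ need not be bounded or closed. This is handled by working on $\overline{B_\rho(O)}$ with $\rho<r$, which is compact and contained in $D$. No step is a real obstacle; the key input is monotonicity of $\nabla u$.
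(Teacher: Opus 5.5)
Your proof is correct and complete. Monotonicity of $\nabla u$ along segments in the convex set $D$ gives $|T(p)-T(q)|\geq|p-q|$, hence injectivity and a $1$-Lipschitz inverse. Positivity of $I_n+{\rm Hess}(u)$ makes $T$ a local diffeomorphism. For the ball containment, minimizing $\phi$ on $\overline{B_\rho(O)}\subset D$ works: the inequality $\langle\nabla\phi(p_0),p_0\rangle\geq\rho(\rho-|y|)>0$ correctly rules out a minimizer on the sphere $|p_0|=\rho$, so the minimizer is an interior critical point with $T(p_0)=y$.

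The paper gives no proof of this lemma. It only calls it classical and implicit in Nitsche, so there is no argument in the text to match yours against. The classical argument, which matches the style the paper uses in the proof of Lemma \ref{3NN}, is topological rather than variational:
\begin{itemize}
\item $T(B_r(O))$ is open and contains $O=T(O)$.
\item Suppose some $w\in\partial T(B_r(O))$ had $|w|<r$. Write $w=\lim T(p_k)$ with $p_k\in B_r(O)$. Since $|p_k|\leq|T(p_k)|$, the $p_k$ eventually lie in a compact ball $\overline{B_{r'}(O)}$ with $r'<r$.
\item A subsequence then converges to some $p\in B_r(O)$ with $T(p)=w$. This contradicts the openness of $T(B_r(O))$, so $\partial T(B_r(O))\cap B_r(O)=\emptyset$.
\item Connectedness of $B_r(O)$ then gives $B_r(O)\subset T(B_r(O))$.
\end{itemize}
That route uses only three facts: $T$ is open, $T(O)=O$, and $|T(p)|\geq|p|$. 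So it applies to any expanding local diffeomorphism fixing $O$, with no gradient structure needed.

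Your route instead exploits that $T=\nabla\bigl(\tfrac12|p|^2+u\bigr)$ is a gradient. It produces the preimage of each $y$ directly as a minimizer on a compact subset of $D$. This is shorter and fully self-contained, at the price of depending on that extra structure.
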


The following lemma, which is new even for $m = 1$, explains why we are able to improve Nitsche's estimate $M_2 \leq 4$ from \cite{Nit} to $M_2 \leq 3$ in this case. Note that Lemma \ref{DT} implies $\delta(O) \leq R$.

\begin{lemma}\label{3NN}
Let $u:U\subset\R^{2m}\to\R$ be smooth and convex with symplectic Hessian such that \eqref{symplectic} holds. As before, let $\delta(O)$ be the maximal radius such that $B_{\delta(O)}(O)\subset U$. Consider the map $$T_{3}:B_{\delta(O)}(O)\to T_{3}(B_{\delta(O)}(O))$$
from \eqref{T3m}. Let $R>0$ be the maximal radius such that $B_R(O)\subset T_{3}(B_{\delta(O)}(O))$. Then
$$\frac{R}{4}\leq \delta(O)\leq \frac{3R}{4}.$$
\end{lemma}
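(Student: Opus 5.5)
The plan is to work entirely in the $\sigma$-coordinates of \eqref{def_sigma}. I will express the original point $p=(x_i,y_i)$ and the map $T_3$ through the holomorphic map $f$ of Lemma \ref{Holchart1}, and then control $f$ by a Schwarz lemma. Identify $\mathbb{R}^{2m}\cong\mathbb{C}^m$ via $x_i+\sqrt{-1}y_i$, and write $p$ also for this complex vector. By \eqref{def_sigma} and \eqref{def_f},
$$\sigma = p + \nabla u(p), \qquad \overline{f(\sigma)} = p - \nabla u(p), \qquad\text{so}\qquad p = \tfrac12\bigl(\sigma + \overline{f(\sigma)}\bigr).$$
By the normalization \eqref{symplectic}, the point $O$ corresponds to $\sigma=0$, and $f(0)=0$. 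Moreover $\mathrm{Hess}(u)|_O=I_{2m}$ gives $\Phi=I_m$ and $\Gamma=0$ at $O$. Hence $Z=\sqrt{-1}I_m$ there, and Lemma \ref{Holchart1} gives $\frac{\partial f}{\partial\sigma}(0)=\mathscr{C}(\sqrt{-1}I_m)=0$.

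The key step is a Schwarz-lemma estimate on $B_R(O)\subset T_3(B_{\delta(O)}(O))$. The map $W:=\frac{\partial f}{\partial\sigma}$ is holomorphic on $B_R(O)$, takes values in $\mathbb{D}_m$, and vanishes at $0$. Therefore $\|W(\sigma)\|_{\rm op}\leq |\sigma|/R$ for $|\sigma|<R$. To prove this, fix a unit vector $\xi\in\mathbb{C}^m$ and unit vectors $a,b$, and apply the classical one-variable Schwarz lemma to $\zeta\mapsto \langle W(R\zeta\xi)a,b\rangle$ on the unit disk. This function has modulus $<1$ because $I_m-W\overline W^\top>0$ means $\|W\|_{\rm op}<1$, and it vanishes at $\zeta=0$. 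Taking the supremum over $a,b$ gives the claim. Integrating along the segment from $0$ to $\sigma$, and using that $W$ is complex linear so its real operator norm equals its complex one, yields
$$|f(\sigma)| \leq \int_0^{|\sigma|}\frac{t}{R}\,dt = \frac{|\sigma|^2}{2R}\qquad\text{for } |\sigma|<R.$$

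Now the two bounds follow from the boundary behaviour of $T_3$. By maximality of $R$ there is a point $\sigma_0$ with $|\sigma_0|=R$ and $\sigma_0\notin T_3(B_{\delta(O)}(O))$. Choose $\sigma_k\in B_R(O)$ with $\sigma_k\to\sigma_0$, and set $p_k:=T_3^{-1}(\sigma_k)\in B_{\delta(O)}(O)$. Suppose the $p_k$ stayed in a compact subset of $B_{\delta(O)}(O)$. Then a subsequence would converge to some $p_\infty\in B_{\delta(O)}(O)$, and continuity would give $\sigma_0=T_3(p_\infty)$, contradicting $\sigma_0\notin T_3(B_{\delta(O)}(O))$. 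Hence $|p_k|\to\delta(O)$. The formula for $p$ and the bound $|f(\sigma_k)|\leq|\sigma_k|^2/(2R)$ give
$$\tfrac12\Bigl(|\sigma_k|-\tfrac{|\sigma_k|^2}{2R}\Bigr)\leq |p_k| \leq \tfrac12\Bigl(|\sigma_k|+\tfrac{|\sigma_k|^2}{2R}\Bigr).$$
Letting $k\to\infty$ gives $\frac12(R-\frac R2)\leq\delta(O)\leq\frac12(R+\frac R2)$, that is, $\frac R4\leq\delta(O)\leq\frac{3R}4$.

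The main point requiring care is the Schwarz lemma step. The matrix-valued version on the Siegel disk must be reduced correctly to the scalar case via the operator norm (not the Frobenius bound of Remark \ref{bounded}), since this is what makes the sharp bounds $\frac14$ and $\frac34$ appear. The small technical issue is the compactness argument that produces boundary points, and $R<\infty$ is needed for $\sigma_0$ to exist. If $U$ is all of $\mathbb{R}^{2m}$ there is nothing to prove. Otherwise $\delta(O)<\infty$, and $T_3$ maps $B_{\delta(O)}(O)$ onto a set on which $\sigma\mapsto p(\sigma)$ is $2$-Lipschitz from the bounds above, so $R\leq 4\delta(O)<\infty$ follows a posteriori.
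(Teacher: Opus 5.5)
Your proof is correct and is essentially the paper's argument: the same Schwarz-lemma bound $\|\frac{\partial f}{\partial\sigma}(\sigma)\|_{\operatorname{op}}\leq|\sigma|/R$, the same integration giving $|f(\sigma)|\leq|\sigma|^2/(2R)$, and the same inversion $T_3^{-1}(\sigma)=\frac12(\sigma+\overline{f(\sigma)})$; your single sequence $\sigma_k\to\sigma_0\in\partial B_R(O)\setminus T_3(B_{\delta(O)}(O))$ is a slightly more economical substitute for the paper's two $\varepsilon$-exhaustions by $T_3^{-1}(B_{R-\varepsilon}(O))$ and $T_3(B_{\delta(O)-\varepsilon}(O))$. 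The only slip is your justification that $R<\infty$: a Lipschitz bound on $\sigma\mapsto p(\sigma)$ only bounds $|p|$ from above, so it cannot give this; what works is the lower bound $|p(\sigma)|\geq\frac12(|\sigma|-|\sigma|^2/(2r))$, valid for any finite $r$ with $B_r(O)\subset T_3(B_{\delta(O)}(O))$, which yields $r\leq 4\delta(O)$ directly (and this already proves $R/4\leq\delta(O)$ without any boundary argument).
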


\begin{proof}
By Lemma \ref{Holchart1} and \eqref{symplectic} and by the definition of $\mathscr{C}$, we have that 
\begin{equation}\label{op}
    f(O) = 0, \quad \frac{\partial f}{\partial \sigma}(O) = \mathscr{C}(\sqrt{-1}I_m) = 0, \quad I_{m}-\frac{\partial f}{\partial \sigma}\,\overline{\frac{\partial f}{\partial \sigma}}^{\top}>0.
\end{equation}

\noindent {\bf Claim:} Let $\|\cdot\|_{\operatorname{op}}$ denote the operator norm on $\C^{m\times m}$ defined by 
$$\|A\|_{\operatorname{op}}:=\sup_{|v|=1} |Av|,$$ 
where $|\cdot|$ is the standard Euclidean norm on $\C^{m}$. Then we have that 
$$\left\| \frac{\partial f}{\partial \sigma}(\sigma)\right\|_{\operatorname{op}}\leq \frac{|\sigma|}{R}\;\,\text{for all}\;\,\sigma \in B_R(O).$$

\noindent {\bf Proof of the Claim:}
The Claim is trivial from \eqref{op} at $\sigma = O$. Thus, fix any $\sigma\in B_{R}(O)\setminus\{O\} $. For any two unit vectors $v,w\in \C^{m}$, define an auxiliary function $\phi_{v,w}:B_{R}(0)\subset\C\to \C$ by
$$\phi_{v,w}(\zeta):= \left\langle \frac{\partial f}{\partial \sigma}\left(\zeta \frac{\sigma}{|\sigma|}\right) \cdot v,w \right\rangle,$$
where $\langle \cdot, \cdot \rangle$ is the standard Hermitian inner product on $\mathbb{C}^m$ (complex linear in the first variable).
This is holomorphic because $f$ is. Moreover, $\phi_{v,w}(0) = 0$ and $|\phi_{v,w}(\zeta)| < 1$ for all $\zeta \in B_R(0)$ because \eqref{op} implies that all of the singular values of $\partial f/\partial \sigma$ are $< 1$, so that $\|\partial f/\partial \sigma\|_{\rm op} < 1$. Thus, the Schwarz lemma yields $|\phi_{v,w}(\zeta)| \leq |\zeta|/R$. The Claim follows by choosing $\zeta = |\sigma|$ and maximizing over $v,w$. \hfill $\Box$\medskip

\noindent To proceed, note that since $f(O)=0$,
$$
  |f(\sigma)| = \displaystyle \left|\int_{0}^{1} \frac{\partial f}{\partial \sigma}(t\sigma)\sigma \, dt\right|  
           \leq  \displaystyle \int_{0}^{1} \left\|\frac{\partial f}{\partial \sigma}(t\sigma)\right\|_{\operatorname{op}}|\sigma| \, dt 
           \leq  \displaystyle  \int_{0}^{1} t\frac{|\sigma|}{R}|\sigma| \, dt 
           =  \displaystyle  \frac{|\sigma|^2}{2R} 
           <   \displaystyle \frac{R}{2}
$$
if $|\sigma|<R$. Therefore, by using the equation
\begin{align*}
    T_3^{-1}(\sigma) = x+\sqrt{-1}y=\frac{\sigma+\overline{f(\sigma)}}{2},
\end{align*}
which is obvious from \eqref{def_sigma}, \eqref{def_f}, we conclude that
\begin{equation}\label{xy-sigma}
\frac{|\sigma|}{2}-\frac{R}{4} < |T_3^{-1}(\sigma)| < \frac{|\sigma|}{2}+\frac{R}{4} \;\,\text{for all}\;\,\sigma \in B_R(O).
\end{equation}

\noindent {\bf Proof of the inequality $R/4\leq\delta(O)$:} Let $0<\varepsilon<R$, and consider $B_{R-\varepsilon}(O)\subset T_{3}(B_{\delta(O)}(O))$. Set $$U_{\varepsilon}:=T_{3}^{-1}(B_{R-\varepsilon}(O))\subset B_{\delta(O)}(O).$$
Then $U_\varepsilon$ contains the point $O = T_3^{-1}(O)$, and $U_\varepsilon$ is an open set because $T_3$ is continuous. Moreover, $\overline{U_{\varepsilon}}\subset B_{\delta(O)}(O)$ as well because for all $u \in \partial U_\varepsilon$ we can pick a sequence $u_i \in U_\varepsilon$ converging to $u$; then $v_i := T_3(u_i) \in B_{R-\varepsilon}(O)$, so the sequence $v_i$ has a limit $v \in \overline{B_{R-\varepsilon}(O)} \subset B_R(O) \subset T_3(B_{\delta(O)}(O))$ after passing to a subsequence, so $u = T_3^{-1}(v) \in B_{\delta(O)}(O)$ by the continuity of $T_3^{-1}$.

Let $\delta_{\varepsilon}(O)$ denote the Euclidean distance from $O$ to $\partial U_{\varepsilon}$. Since $\overline{U_\varepsilon}\subset B_{\delta(O)}(O)$, there exists $z_\varepsilon\in\partial U_\varepsilon $ such that $|z_{\varepsilon}|=\delta_{\varepsilon}(O) < \delta(O)$. If  $\sigma_{\varepsilon}:=T_{3}(z_{\varepsilon})$, then $\sigma_\varepsilon\in \overline{B_{R-\varepsilon}(O)}$ as above. In fact, $\sigma_\varepsilon \in \partial B_{R-\varepsilon}(O)$ because otherwise $z_\varepsilon \in T_3^{-1}(B_{R-\varepsilon}(O)) = U_\varepsilon$. Thus, by \eqref{xy-sigma} we have that
$$\frac{R-\varepsilon}{2} - \frac{R}{4} = \frac{|\sigma_{\varepsilon}|}{2}-\frac{R}{4} < |z_{\varepsilon}| = \delta_\varepsilon(O) < \delta(O).$$
Letting $\varepsilon\to 0$, we conclude the claim.\medskip

\noindent {\bf Proof of the inequality $\delta(O)\leq 3R/4$:} Let $0<\varepsilon<\delta(O)$, and consider $B_{\delta(O)-\varepsilon}(O)\subset B_{\delta(O)}(O)$. Set $$V_{\varepsilon}:=T_{3}(B_{\delta(O)-\varepsilon}(O))\subset T_{3}(B_{\delta(O)}(O)).$$
Then $V_\varepsilon$ contains the point $O = T_3(O)$, and $V_\varepsilon$ is an open set because $T_3^{-1}$ is continuous. Moreover, $\overline{V_{\varepsilon}}\subset T_3(B_{\delta(O)}(O))$ as well because for all $v \in \partial V_\varepsilon$ we can pick a sequence $v_i \in V_\varepsilon$ converging to $v$; then $u_i := T_3^{-1}(v_i) \in B_{\delta(O)-\varepsilon}(O)$, so the sequence $u_i$ has a limit $u \in \overline{B_{\delta(O)-\varepsilon}(O)} \subset B_{\delta(O)}(O)$ after passing to a subsequence, so $v = T_3(u) \in T_3(B_{\delta(O)}(O))$ by the continuity of $T_3$.

Let $R_{\varepsilon}$ denote the Euclidean distance from $O$ to $\partial V_{\varepsilon}$. Since $\overline{V_{\varepsilon}}\subset T_{3}(B_{\delta(O)}(O))$, there exists $\sigma_{\varepsilon}\in \partial V_{\varepsilon}$ such that $|\sigma_{\varepsilon}|=R_{\varepsilon} < R$. If $z_{\varepsilon}:=T_{3}^{-1}(\sigma_{\varepsilon})$, then $z_\varepsilon\in \overline{B_{\delta(O)-\varepsilon}(O)}$ as above. In fact, $z_\varepsilon \in \partial B_{\delta(O)-\varepsilon}(O)$ because otherwise $\sigma_\varepsilon \in T_3(B_{R-\varepsilon}(O)) = V_\varepsilon$. Thus, by \eqref{xy-sigma} we have that
$$\delta(O)-\varepsilon = |z_{\varepsilon}|< \frac{|\sigma_{\varepsilon}|}{2}+\frac{R}{4} = \frac{R_\varepsilon}{2}+\frac{R}{4} < \frac{3R}{4}.$$
Letting $\varepsilon\to 0$, we conclude the claim.
\end{proof}

The following lemmas are again generalizations of computations due to Nitsche \cite{Nit} for $m = 1$.

\begin{lemma}\label{Sim}
Let $u:U\subset\R^{2m}\to\R$ be smooth and convex with symplectic Hessian such that \eqref{symplectic} holds. Then the scalar curvature $R_g$ of the associated Hessian metric $g$ on $U$ satisfies
\begin{equation}\label{SCO}R_g(O)=\sum_{i,j,k=1}^{m} (u_{x_{i}x_{j}x_{k}}(O)^{2}+u_{x_{i}x_{j}y_{k}}(O)^{2}).
\end{equation}
\end{lemma}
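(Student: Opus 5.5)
The plan is to start from the general formula of Lemma \ref{SCT} and simplify it at $O$ using two facts: the normalization \eqref{symplectic}, and the first-order information carried by the symplectic condition.

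\emph{Step 1: specialize Lemma \ref{SCT} at $O$.} Since $g|_O = {\rm Hess}(u)|_O = I_{2m}$ by \eqref{symplectic}, Lemma \ref{SCT} reduces at $O$ to
$$R_g(O) = \frac14 \sum_{a,b,c} u_{abc}(O)^2,$$
where $a,b,c$ run over all $2m$ coordinates $x_1,\ldots,x_m,y_1,\ldots,y_m$. Sort the triples by how many $x$'s and $y$'s they contain, and use the symmetry of third derivatives. Set
$$S_{xxx} = \sum_{i,j,k} u_{x_ix_jx_k}^2,\quad S_{xxy}=\sum_{i,j,k} u_{x_ix_jy_k}^2,\quad S_{xyy}=\sum_{i,j,k} u_{x_iy_jy_k}^2,\quad S_{yyy}=\sum_{i,j,k} u_{y_iy_jy_k}^2,$$
all evaluated at $O$. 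Then
$$\sum_{a,b,c} u_{abc}^2 = S_{xxx}+3S_{xxy}+3S_{xyy}+S_{yyy}.$$

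\emph{Step 2: linearize the symplectic condition at $O$.} For any direction $e$, the derivative $H := \partial_e {\rm Hess}(u)|_O$ is symmetric. It is also tangent to ${\rm Sp}(2m,\R)$ at $I_{2m}$, which gives $H^\top J + JH = HJ + JH = 0$ with $J = (\begin{smallmatrix}0&-I_m\\ I_m&0\end{smallmatrix})$. Write $H$ in block form $(\begin{smallmatrix}a&b\\ b^\top&c\end{smallmatrix})$. A short block computation shows the condition is equivalent to $b=b^\top$ and $c=-a$. Taking $e$ to be each coordinate direction, this yields:
$$u_{y_iy_j\,e}(O) = -u_{x_ix_j\,e}(O),\qquad u_{x_iy_j\,e}(O)=u_{x_jy_i\,e}(O).$$
Only the first identity is needed below.

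\emph{Step 3: combine.}
\begin{itemize}
\item With $e=y_k$: $u_{y_iy_jy_k} = -u_{x_ix_jy_k}$, so $S_{yyy}=S_{xxy}$.
\item With $e=x_i$, using symmetry of third derivatives: $u_{x_iy_jy_k} = u_{y_jy_kx_i} = -u_{x_jx_kx_i}$, so $S_{xyy} = S_{xxx}$.
\end{itemize}
The total from Step 1 therefore equals $4(S_{xxx}+S_{xxy})$. Dividing by $4$ gives \eqref{SCO}.

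The only place that needs care is Step 2, namely getting the block identities right. That is a routine check, and a different sign convention for $J$ does not change the conclusion $c=-a$.
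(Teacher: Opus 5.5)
Your proof is correct and follows essentially the same route as the paper: specialize Lemma \ref{SCT} at $O$, where $g=I_{2m}$, and use the first-order consequence $u_{y_iy_j e}(O)=-u_{x_ix_j e}(O)$ of the symplectic condition to collapse the eight groups of third derivatives to $4(S_{xxx}+S_{xxy})$. The only cosmetic difference is that the paper gets this identity by differentiating the block relation \eqref{SP2} directly, while you linearize $P^{\top}JP=J$ at the identity, which is the same computation.
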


\begin{proof}
By Lemma \ref{SCT} and by the normalization ${\rm Hess}(u)|_O = I_{2m}$,
\begin{align}\label{S}
\begin{split}
    R_g(O)&= \frac{1}{4}\sum_{i,j,k=1}^{m} (u_{x_{i}x_{j}x_{k}}(O)^{2} + u_{x_{i}y_{j}x_{k}}(O)^{2} +u_{x_{i}x_{j}y_{k}}(O)^{2} +u_{x_{i}y_{j}y_{k}}(O)^{2} \\
    &   \displaystyle \ \ \ \ \ \ \hspace{5.5mm}\ +\,u_{y_{i}x_{j}x_{k}}(O)^{2} +u_{y_{i}y_{j}x_{k}}(O)^{2}+u_{y_{i}x_{j}y_{k}}(O)^{2}+u_{y_{i}y_{j}y_{k}}(O)^{2}).
\end{split}
\end{align}
Since ${\rm Hess}(u)\in {\rm Sp}(2m,\R)$, by equation \eqref{SP2} we obtain
$$\sum_{\ell=1}^m (u_{x_i x_\ell} u_{y_\ell y_j} - u_{x_i y_\ell} u_{x_\ell y_j})= \delta_{ij} \quad \text{for all} \  i,j = 1, \ldots, m.$$
Differentiating with respect to $x_{k}$ yields
$$\sum_{\ell=1}^m (u_{x_i x_\ell x_k} u_{y_\ell y_j} + u_{x_i x_\ell} u_{y_\ell y_j x_k} - u_{x_i y_\ell x_k} u_{x_\ell y_j} - u_{x_i y_\ell} u_{x_\ell y_j x_k}) = 0.$$
Recall that $u_{x_{i}x_{j}}(O)=u_{y_{i}y_{j}}(O)=\delta_{ij}$ and $u_{x_{i}y_{j}}(O)=0$. Thus,
$$u_{x_{i}x_{j}x_{k}}(O) =-u_{y_{i}y_{j}x_{k}}(O) \quad \text{for all}\  i,j,k=1,\ldots,m.$$
Similarly, differentiating with respect to $y_{k}$ we find that
$$u_{x_i x_j y_k}(O) = - u_{y_i y_j y_k}(O) \quad \text{for all}\ i,j,k=1,\ldots,m.$$
Substituting these into \eqref{S}, we obtain \eqref{SCO}. 
\end{proof}

\begin{lemma}\label{CT}
For all $Z \in \mathfrak{H}_m$ we have that
$$\mathrm{Im}(Z)^{-1}=(I_{m}-\overline{{\mathscr{C}(Z)}})(I_{m}-\mathscr{C}(Z)\overline{\mathscr{C}(Z)})^{-1}(I_{m}-\mathscr{C}(Z)).$$
\end{lemma}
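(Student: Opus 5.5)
Write $W := \mathscr{C}(Z) = (Z-\sqrt{-1}I_m)(Z+\sqrt{-1}I_m)^{-1}$ and invert the Cayley transformation. Solving $W(Z+\sqrt{-1}I_m) = Z-\sqrt{-1}I_m$ for $Z$ gives
\[
Z = \sqrt{-1}(I_m-W)^{-1}(I_m+W).
\]
Here $I_m - W$ is invertible because $\|W\|_{\rm op}<1$ on $\mathbb{D}_m$. Since $Z$ is symmetric, this also equals $\sqrt{-1}(I_m+W)(I_m-W)^{-1}$, because the two factors commute. Similarly, $W$ is symmetric, so $\overline{W}^\top = \overline{W}$, and likewise $\overline{Z} = \overline{Z}^\top$.

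Next compute $\mathrm{Im}(Z) = \frac{1}{2\sqrt{-1}}(Z - \overline{Z})$. Take $Z$ in the form $\sqrt{-1}(I_m+W)(I_m-W)^{-1}$ and $\overline{Z}$ in the form $-\sqrt{-1}(I_m-\overline{W})^{-1}(I_m+\overline{W})$. Then
\[
2\,\mathrm{Im}(Z) = (I_m+W)(I_m-W)^{-1} + (I_m-\overline{W})^{-1}(I_m+\overline{W}).
\]
Multiply on the left by $(I_m-\overline{W})$ and on the right by $(I_m-W)$:
\[
2(I_m-\overline{W})\,\mathrm{Im}(Z)\,(I_m-W) = (I_m-\overline{W})(I_m+W) + (I_m+\overline{W})(I_m-W) = 2(I_m - \overline{W}W).
\]
Hence
\[
\mathrm{Im}(Z) = (I_m-\overline{W})^{-1}(I_m-\overline{W}W)(I_m-W)^{-1}.
\]
Inverting gives
\[
\mathrm{Im}(Z)^{-1} = (I_m-W)(I_m-\overline{W}W)^{-1}(I_m-\overline{W}).
\]

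This is close to the stated formula, but the roles of $W$ and $\overline{W}$ are swapped. To reconcile, use that $\mathrm{Im}(Z)$ is real, so $\mathrm{Im}(Z)^{-1}$ equals its own complex conjugate. Conjugating the last identity yields
\[
\mathrm{Im}(Z)^{-1} = (I_m-\overline{W})(I_m-W\overline{W})^{-1}(I_m-W),
\]
which is exactly the claim. Invertibility of $I_m-W\overline{W}$ follows from the definition of $\mathbb{D}_m$, since $W\overline{W}^\top = W\overline{W}$ there. Equivalently, it follows from the displayed factorization, because $\mathrm{Im}(Z)>0$.

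The main obstacle is bookkeeping: keeping track of which factors commute and which side conjugation lands on. The two facts that make everything work are that $W$ commutes with $(I_m\pm W)^{-1}$, and that symmetry lets us replace $\overline{W}^\top$ by $\overline{W}$. A fallback check is the case $m=1$, where the identity reduces to the classical $\mathrm{Im}(z) = (1-|w|^2)/|1-w|^2$.
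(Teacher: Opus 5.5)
Your proof is correct and follows the paper's argument: invert the Cayley transformation, write $2\,\mathrm{Im}(Z)$ as a sum of two terms, factor out the outer matrices, and invert. The only difference is that the paper writes $Z=\sqrt{-1}(I_m-W)^{-1}(I_m+W)$ and $\overline{Z}=-\sqrt{-1}(I_m+\overline{W})(I_m-\overline{W})^{-1}$, which gives $\mathrm{Im}(Z)=(I_m-W)^{-1}(I_m-W\overline{W})(I_m-\overline{W})^{-1}$ directly, so your final conjugation step (valid, since $\mathrm{Im}(Z)$ is real) is not needed.
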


\begin{proof}
If $W := \mathscr{C}(Z)$, then, by inverting the Cayley transformation,
$$Z =\sqrt{-1}(I_{m}+W)(I_{m}-W)^{-1},$$
and by complex conjugation we get
$$\overline{Z}=-\sqrt{-1}(I_{m}+\overline{W})(I_{m}-\overline{W})^{-1}.$$
Hence, observing that $I_m +W$ and $(I_m-W)^{-1}$ commute,
\begin{align*}
  \mathrm{Im}(Z) & =  \displaystyle \frac{1}{2\sqrt{-1}}(Z-\overline{Z})  \\
& =  \displaystyle \frac{1}{2\sqrt{-1}}(\sqrt{-1}(I_{m}-W)^{-1}(I_{m}+W)+\sqrt{-1}(I_m+\overline{W})(I_{m}-\overline{W})^{-1})\\
& = \displaystyle \frac{1}{2}(I_{m}-W)^{-1}((I_{m}+W)(I_{m}-\overline{W}) +(I_{m}-W)(I_{m}+\overline{W}))(I_{m}-\overline{W})^{-1}\\
   & =  (I_{m}-W)^{-1}(I-W\overline{W})(I_{m}-\overline{W})^{-1}.
\end{align*}
The lemma follows from this by inversion.
\end{proof}

\begin{proof}[Proof of Theorem \ref{thm:main}]
Fix an arbitrary point $x \in U$, aiming to prove that the main estimate \eqref{eq:main_est} is true at the point $x$. We may assume that $x$ is the origin $O \in \mathbb{R}^{2m}$ and that the normalization \eqref{symplectic} holds. By Lemma \ref{Sim} and because $\Phi^{-1} = (u_{x_ix_j})$, it holds at the point $O$ that
$$R_g=\sum_{i,j,k=1}^{m} (u_{x_{i}x_{j}x_{k}}^{2}+u_{x_{i}x_{j}y_{k}}^{2}) = \sum_{k=1}^{m} (\|\partial_{x_k}\Phi^{-1}\|_F^2 + \|\partial_{y_k}\Phi^{-1}\|_F^2).$$
By Lemma \ref{CT},
$$\Phi^{-1}=(I_{m}-\overline{\mathscr{C}(Z)})(I_{m}-\mathscr{C}(Z)\overline{\mathscr{C}(Z)})^{-1}(I_{m}-\mathscr{C}(Z)).$$
Since $Z(O) = \sqrt{-1}I_m$ by \eqref{symplectic}, and hence $\mathscr{C}(Z)(O) = 0$, we have 
$$\partial_{x_k}\Phi^{-1} = -2{\rm Re}(\partial_{x_k}\mathscr{C}(Z)),\quad  \partial_{y_k}\Phi^{-1} = -2{\rm Re}(\partial_{y_k} \mathscr{C}(Z)),$$
again at $O$. By the chain rule, because $\mathscr{C}(Z)$ is holomorphic with respect to $\sigma$, and by \eqref{def_sigma},
$$\partial_{x_k}\mathscr{C}(Z) = \sum_{i=1}^m \partial_{\sigma_i}\mathscr{C}(Z) \cdot \partial_{x_k}\sigma_i = 2\partial_{\sigma_k}\mathscr{C}(Z),\quad  \partial_{y_k}\mathscr{C}(Z) = 2\sqrt{-1}\partial_{\sigma_k}\mathscr{C}(Z)$$
at $O$. Thus, putting everything together,
\begin{equation}\label{f''}
    R_g(O)= 16 \sum_{k=1}^m \|\partial_{\sigma_k}\mathscr{C}(Z)(O)\|_F^2.
\end{equation}

Now let $R > 0$ denote the maximal radius such that $B_R(O)\subset T_{3}(B_{\delta(O)}(O))$ as in Lemma \ref{3NN}, and recall that $\delta(O)$ is by definition equal to the quantity ${\rm dist}|_{g_x}(x,\partial U)$ in the desired estimate \eqref{eq:main_est}.
Thus, for every $k \in \{1,\ldots,m\}$, the domain $T_3(B_{\delta(O)}(O))$ contains the complex $1$-dimensional disk of radius $R$ centered at the origin inside the $\sigma_k$-axis in $\mathbb{C}^m$. Call this disk $\Delta_{k}$.

The standard proof of the Cauchy inequality for holomorphic functions on $\Delta_k$ applies verbatim to holomorphic maps with values in a finite-dimensional normed complex vector space; all that is needed for this is the triangle inequality for integrals. Thus,
$$\|\partial_{\sigma_k} \mathscr{C}(Z)(O)\|_F \leq \frac{1}{R}\sup_{\Delta_k} \|\mathscr{C}(Z)\|_F \leq \frac{\sqrt{m}}{R}$$
by Remark \ref{bounded}. Thus,
$$R_g(O)\leq 16 \sum_{k=1}^{m}\frac{m}{R^2}=\frac{16m^2}{R^2}.$$
By combining this with Lemma \ref{3NN}, we obtain the desired estimate \eqref{eq:main_est}.
\end{proof}

\subsection{Inversion of the holomorphic description}\label{2.5}

One might worry that except for the case $m = 1$ and direct sums there are no examples of convex functions with symplectic Hessian. To fix this gap, and to be able to study some concrete examples in Section \ref{sec:special}, we now invert our construction.

\begin{proposition}\label{MACorr}
There exist constructive local correspondences between the following objects:
\begin{itemize}
    \item[\textup{(1)}] A convex function $u:U \subset \R^{2m} \to\R$ satisfying $\det\hspace{0.25mm} {\rm Hess}(u) = 1$ and ${\rm Hess}(u)\in{\rm Sp}(2m,\R)$ with respect to the standard real coordinates $(x_i,y_i)$ of $\mathbb{R}^{2m}$.
    \item[\textup{(2)}] A function $\Lambda:\widetilde{U} \subset \C^m \to\R$, pluriharmonic with respect to the standard complex coordinates $\alpha_{i}=\tilde{x}_{i}+\sqrt{-1}\tilde{y}_{i}$ of $\mathbb{C}^m$, such that $(\Lambda_{\tilde{x}_{i}\tilde{x}_{j}}) \in \mathbb{R}^{m\times m}$ is positive definite.
    \item[\textup{(3)}] A function $\mathfrak{h}:\widetilde{U} \subset \C^m \to\C$, holomorphic with respect to the standard complex coordinates $\alpha_{i}=\tilde{x}_{i}+\sqrt{-1}\tilde{y}_{i}$ of $\mathbb{C}^m$, such that ${\rm Im}(\mathfrak{h}_{\alpha_i\alpha_j}) \in \R^{m\times m}$ is positive definite.
    \item[\textup{(4)}] A function $Z = \Gamma + \sqrt{-1}\Phi:\widetilde{U} \subset \C^m \to\mathfrak{H}_m$, holomorphic with respect to the standard complex coordinates $\alpha_{i}=\tilde{x}_{i}+\sqrt{-1}\tilde{y}_{i}$ of $\mathbb{C}^m$, such that $Z_{ij,\alpha_k}=Z_{ik,\alpha_j}$ for all $i,j,k = 1,\ldots,m$.
\end{itemize}
\end{proposition}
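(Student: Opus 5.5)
The plan is to prove the chain of implications (1)$\Rightarrow$(2)$\Rightarrow$(3)$\Rightarrow$(4)$\Rightarrow$(3)$\Rightarrow$(2)$\Rightarrow$(1), each step local, that is, after shrinking domains. The forward direction (1)$\Rightarrow$(2) is essentially already done. Given $u$, put $K=4u$ and use the partial Legendre transform $\Lambda/4$ of $K/4$ in the $x_i$ variables, with new coordinates $\tilde x_i=u_{x_i}$, $\tilde y_i=y_i$. Corollary \ref{MAP} together with \eqref{ph} shows $4\Phi=(\Lambda_{\tilde x_i\tilde x_j})$, and this is positive definite since it equals $4(u_{x_ix_j})^{-1}$. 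Lemma \ref{alpha} gives pluriharmonicity.

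For (2)$\Rightarrow$(3), use that a pluriharmonic function on a simply connected (shrunk) domain is the real part of a holomorphic function. I will set $\Lambda=4\,{\rm Im}(\mathfrak h)$, normalising so that ${\rm Im}(\mathfrak h_{\alpha_i\alpha_j})=\frac14\Lambda_{\tilde x_i\tilde x_j}=\Phi$. This uses $\partial_{\alpha}=\partial_{\tilde x}$ on holomorphic functions, so ${\rm Im}\,\mathfrak h_{\alpha_i\alpha_j}=\partial_{\tilde x_i}\partial_{\tilde x_j}{\rm Im}\,\mathfrak h$.

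For (3)$\Rightarrow$(4), set $Z:=\mathfrak h_{\alpha_i\alpha_j}$. It is symmetric, holomorphic and $\mathfrak H_m$-valued, and the condition $Z_{ij,\alpha_k}=Z_{ik,\alpha_j}$ holds because third derivatives commute. I will also check the consistency $Z=\Gamma+\sqrt{-1}\Phi$ with $\Gamma=\frac14\Lambda_{\tilde x\tilde y}$: we have ${\rm Re}\,\mathfrak h_{\alpha_i\alpha_j}=\partial_{\tilde x_i}\partial_{\tilde x_j}{\rm Re}\,\mathfrak h=\partial_{\tilde x_i}\partial_{\tilde y_j}{\rm Im}\,\mathfrak h$ by Cauchy--Riemann, as in Lemma \ref{Zholo}.

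For (4)$\Rightarrow$(3), apply the holomorphic Poincaré lemma twice on a polydisc. The symmetry $Z_{ij,\alpha_k}=Z_{ik,\alpha_j}$ gives, for each $i$, holomorphic $F_i$ with $F_{i,\alpha_j}=Z_{ij}$. The symmetry $Z_{ij}=Z_{ji}$ then makes $\sum F_i\,d\alpha_i$ closed, giving $\mathfrak h$ with $\mathfrak h_{\alpha_i\alpha_j}=Z_{ij}$. Then (3)$\Rightarrow$(2) is $\Lambda:=4{\rm Im}\,\mathfrak h$.

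The substantive step, and the main obstacle, is (2)$\Rightarrow$(1): inverting the Legendre transform and checking both conditions on ${\rm Hess}(u)$. Given $\Lambda$, define $x_i:=\frac14\Lambda_{\tilde x_i}$ and $y_i:=\tilde y_i$. The Jacobian of $(\tilde x,\tilde y)\mapsto(x,y)$ is $\bigl(\begin{smallmatrix}\Phi&\Gamma\\0&I\end{smallmatrix}\bigr)$, which is invertible, so after shrinking this is a diffeomorphism. Then define $u:=-\frac14\Lambda+\sum_i\tilde x_ix_i$ in the new coordinates. By the standard involutivity of the partial Legendre transform one gets $u_{x_i}=\tilde x_i$ and $u_{y_j}=-\frac14\Lambda_{\tilde y_j}$. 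Differentiating with the inverse Jacobian gives $(u_{x_ix_j})=\Phi^{-1}$ and $u_{xy}=-\Phi^{-1}\Gamma$, and the chain rule gives $u_{yy}=-\frac14\Lambda_{\tilde y\tilde y}+\Gamma\Phi^{-1}\Gamma$. Pluriharmonicity, in the form $\Lambda_{\tilde x_i\tilde x_j}+\Lambda_{\tilde y_i\tilde y_j}=0$ and $\Lambda_{\tilde x_i\tilde y_j}=\Lambda_{\tilde y_i\tilde x_j}$, turns this into $u_{yy}=\Phi+\Gamma\Phi^{-1}\Gamma$ with $\Gamma=\Gamma^\top$. So ${\rm Hess}(u)$ has exactly the block form of Corollary \ref{MAP}. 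Conditions (i) and (ii) there then give ${\rm Hess}(u)\in{\rm Sp}(2m,\R)$, hence $\det=1$. Positive definiteness follows from the Schur complement: the upper-left block is $\Phi^{-1}>0$ and the complement is $\Phi>0$. The delicate bookkeeping is only the chain rule for $u_{yy}$ and sign conventions. Finally I will remark that the constructions are mutually inverse up to the obvious ambiguities: affine functions in $u$, pluriharmonic functions whose $\tilde x$-Hessian vanishes, and quadratic polynomials in $\mathfrak h$ with real coefficients.
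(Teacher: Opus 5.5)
Your proposal is correct and follows the paper's proof essentially step by step: $\Lambda=4\,{\rm Im}\,\mathfrak h$ and $Z=(\mathfrak h_{\alpha_i\alpha_j})$, with the (holomorphic) Poincar\'e lemma for the converses; (1)$\to$(2) from Corollary~\ref{MAP} and Lemma~\ref{alpha}; and (2)$\to$(1) by inverting the partial Legendre transform, where pluriharmonicity gives the block form of ${\rm Hess}(u)$ and the Schur complement gives positivity. Your closing remark about ``ambiguities'' is not needed and is slightly inaccurate. For example, adding a real quadratic $\alpha^\top q\alpha$ to $\mathfrak h$ shifts $Z$ by $2q$ and replaces $u$ by $u\circ S^{-1}$ with the symplectic shear $S(x,y)=(x+2qy,y)$, rather than leaving the correspondence unchanged, so you can simply drop the remark.
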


\begin{remark}
The above can be extrapolated from the literature but to our knowledge it was never stated explicitly. (3) $\longleftrightarrow$ (4) can be found in \cite{Freed}, where $Z$ is used to construct a special Kähler structure on $\widetilde{U}$. Special Kähler manifolds have associated semi-flat hyper-Kähler structures on their cotangent bundles. In \cite{Lu} it was proved that complete special Kähler manifolds are flat, using an argument very similar to Calabi's scalar curvature estimate \cite{EC} (see Proposition \ref{prop:DiffIneqR} below). Later, \cite{ACD,BC,VC} found a direct link between special Kähler structures and improper affine hyperspheres, hence with (1).
\end{remark}

\begin{proof}[Proof of Proposition \ref{MACorr}]
    (3) $\longrightarrow$ (2) is setting $\Lambda := {\rm Im}(4\mathfrak{h})$, and (3) $\longrightarrow$ (4) is setting $Z := (\mathfrak{h}_{\alpha_i \alpha_j})$. The converses are obtained from the Poincaré lemma. Locally a pluriharmonic function $\Lambda$ always has a conjugate (a primitive of the closed $1$-form $d^c\Lambda$), hence is the imaginary part of a holomorphic function $4\mathfrak{h}$. The holomorphic Poincaré lemma allows us to first go from a $Z$ as in (4) to a holomorphic vector $(h_1,\ldots, h_m)$ such that $Z_{ij} = h_{i,\alpha_j}$, and then to a holomorphic function $\mathfrak{h}$ such that $h_i = \mathfrak{h}_{\alpha_i}$. 

    (1) $\longrightarrow$ (2) is our previous work: By Lemma \ref{MAMG}, $M=U\times \R^{2m}$ with real coordinates $(x_{i},y_{i},s_{i},t_{i})$ is a Calabi-Yau manifold with holomorphic coordinates $(w_{i},z_{i})=(x_{i}+\sqrt{-1}s_{i},y_{i}+\sqrt{-1}t_{i})$, Kähler form $\omega_M$ given by the Kähler potential $K(w_{i},z_{i})=4u(x_{i},y_{i})$, and $2m$ commuting and pointwise linearly independent holomorphic Killing vector fields $\partial_{s_i}, \partial_{t_i}$. By Theorem \ref{CH} and Corollary \ref{MAP}, the partial Legendre transform $\Lambda/4$ of $K/4$ with respect to $x_1,\ldots,x_m$, defined in the coordinates
$$\tilde{x}_{i}:=\frac{K_{{\rm Re}(w_i)}}{4} = \frac{K_{w_{i}}}{2}=u_{x_{i}} ,\quad  \tilde{y}_{i}:=y_{i},$$
allows us to write the complex Hessian of $K$ as
$${\rm Hess}_{\C}(K)=\begin{pmatrix}
    \Phi^{-1} & -\Phi^{-1}\Gamma  \\
     -\Gamma\Phi^{-1}  & \Phi+\Gamma \Phi^{-1}\Gamma 
\end{pmatrix}, \quad \Phi:=\frac{1}{4}(\Lambda_{\tilde{x}_{i}\tilde{x}_{j}}) = -\frac{1}{4}(\Lambda_{\tilde{y}_{i}\tilde{y}_{j}}),\quad  \Gamma:=\frac{1}{4}(\Lambda_{\tilde{x}_{i}\tilde{y}_{j}})=\Gamma^\top.$$
By Lemma \ref{alpha}, $\Lambda$ is pluriharmonic in the coordinates $\alpha_{i}:=\tilde{x}_{i}+\sqrt{-1}\tilde{y}_{i}$. On the other hand, since $K$ is a Kähler potential, we must have that $\Phi^{-1}>0$.

(2) $\longrightarrow$ (1): Taking the partial Legendre transform of $\Lambda/4$ with respect to $\tilde{x}_{i}$, we obtain
$$u(x_i,y_i):=\frac{1}{4}K(x_i,y_i):=-\frac{1}{4}\Lambda(\tilde{x}_i,\tilde{y}_i)+\sum_{i=1}^{m} \tilde{x}_{i}x_{i}, \quad x_{i}:=\frac{\Lambda_{\tilde{x}_{i}}}{4} ,\quad  y_{i}:=\tilde{y}_{i}.$$
Then, by a calculation similar to \eqref{KF},
\begin{align*}
   u_{x_i x_j} =  4\Lambda^{\tilde{x}_{i}\tilde{x}_{j}}, \quad 
   u_{y_i x_j}  =  -\Lambda_{\tilde{y}_i \tilde{x}_k}\Lambda^{\tilde{x}_{k}\tilde{x}_{j}}, \quad
   u_{y_i y_j}  =  \displaystyle  -\frac{1}{4}\Lambda_{\tilde{y}_i\tilde{y}_j}+\displaystyle \frac{1}{4} \Lambda_{\tilde{y}_{i}\tilde{x}_{k}}\Lambda^{\tilde{x}_{k}\tilde{x}_{\ell}}\Lambda_{\tilde{x}_{\ell}{\tilde{y}}_{j}}.
\end{align*}
Since $\Lambda$ is $\alpha$-pluriharmonic, we have that $\Lambda_{\tilde{x}_{i}\tilde{x}_{j}}=-\Lambda_{\tilde{y}_{i}\tilde{y}_{j}}$ and $\Lambda_{\tilde{x}_{i}\tilde{y}_{j}}=\Lambda_{\tilde{y}_{i}\tilde{x}_{j}}$. So, by setting
   $$\Phi:=\frac{1}{4}(\Lambda_{\tilde{x}_{i}\tilde{x}_{j}}) , \ \ \Gamma:=\frac{1}{4}(\Lambda_{\tilde{x}_{i}\tilde{y}_{j}}),$$
we can express the Hessian of $u$ as
   $$
     {\rm Hess}(u)  = \begin{pmatrix}
     (u_{x_{i}x_{j}})  & (u_{x_{i}y_{j}})  \\
     (u_{y_{i}x_{j}})  & (u_{y_{i}y_{j}})
     \end{pmatrix} 
    = \begin{pmatrix}
    \Phi^{-1}  & -\Phi^{-1}\Gamma   \\
    -\Gamma\Phi^{-1}  & \Phi +\Gamma \Phi^{-1} \Gamma
\end{pmatrix}. $$
   Thus, ${\rm Hess}(u)\in {\rm Sp}(2m,\R)$. Since $(\Lambda_{\tilde{x}_{i}\tilde{x}_{j}})$ is positive definite, the upper left block $(u_{x_{i}x_{j}})=\Phi^{-1}$ and its Schur complement $S=(u_{y_{i}y_{j}})-(u_{y_{i}x_{j}})(u_{x_{i}x_{j}})^{-1}(u_{x_{i}y_{j}})=\Phi$ are positive definite as well. This implies that ${\rm Hess}(u)$ is positive definite, so $u$ is convex. Lastly, we recall that ${\rm Sp}(2m,\mathbb{R}) \subset {\rm SL}(2m,\mathbb{R})$.
\end{proof}

To prepare for the work in Section \ref{sec:special}, we now make the reverse correspondence completely explicit for $m = 1$. Write the standard coordinates of $\mathbb{R}^2$ as $(\tilde{x},\tilde{y})$. Define $\alpha := \tilde{x} + \sqrt{-1}\tilde{y}$. Let $\Phi :\widetilde{U} \subset \mathbb{R}^2 \to \R$ be positive harmonic (we can think of this as the Gibbons-Hawking potential of a hyper-Kähler metric with $\R^2$ symmetry on $\widetilde{U} \times \R^2$). We may assume that there exists a function $h: \widetilde{U} \to \C$, holomorphic with respect to $\alpha$, such that $\mathrm{Im}(h_\alpha)=\Phi$. The map $\widetilde{T}:\widetilde{U}\to \R^2$ defined by
$$(x,y) := \widetilde{T}(\tilde{x},\tilde{y}):=(\mathrm{Im}(h(\tilde{x},\tilde{y})),\tilde{y})$$
locally provides a $C^\infty$ change of coordinates. It is then not difficult to see that for $\mathfrak{h}$ a primitive of $h$, i.e. $\mathfrak{h}_\alpha = h$, the function $u: U\to \R$ on $U:=\widetilde{T}(\widetilde{U})$ defined by
\begin{equation}\label{u}
u(x,y):=\tilde{x}\mathrm{Im}(h(\tilde{x},\tilde{y}))- \mathrm{Im}(\mathfrak{h}(\tilde{x},\tilde{y}))
\end{equation}
is a convex function satisfying $\det\hspace{0.25mm}{\rm Hess}(u) = 1$ on the domain $U$.

\section{Examples and consequences}\label{sec:special}

\subsection{Examples of two-dimensional Monge-Ampère metrics}

\begin{example}[Bryant example]\label{Exn=1}
Let $\widetilde{U}=\mathfrak{H}$ be the upper half-plane with coordinates $(\tilde{x},\tilde{y} )$. Consider the positive harmonic function $\Phi(\tilde{x},\tilde{y})=\tilde{y}$ on $\widetilde{U}$. The metric $\Phi(d\tilde{x}^2+d\tilde{y}^2)$ was studied by Bryant \cite{Bryantexample}.
With $\alpha = \tilde{x} + \sqrt{-1}\tilde{y}$ as above, the relevant holomorphic functions are
$$h(\alpha) = \frac{\alpha^2}{2}, \quad \mathfrak{h}(\alpha) = \frac{\alpha^3}{6}.$$
The map $\widetilde{T}: \widetilde{U}\to \widetilde{T} (\widetilde{U}) = U$ is given by
$$(x,y)=\widetilde{T}(\tilde{x}, \tilde{y})=(\tilde{x} \tilde{y},\tilde{y}).$$
In particular, $U=\mathfrak{H}$.
By \eqref{u} we conclude
$$
u(x,y) =  \frac{\tilde{x}^2\tilde{y}}{2}+\frac{\tilde{y}^3}{6} = \frac{x^2}{2y}+\frac{y^3}{6}.$$
The function $u$ is a convex solution of the Monge-Ampère equation on the upper half-plane $\widetilde{T}(\widetilde{U})=\mathfrak{H}$ such that the associated Monge-Ampère metric $g$ is isometric to $\Phi(d\tilde{x}^2 + d\tilde{y}^2)$ via $\widetilde{T}$. ${\rm Hess}(u)$ is the identity exactly at the point $(x,y)=(0,1)$. By Lemma \ref{SCT}, $R_g = u_{xxx}^2 + u_{xxy}^2 = 1$ at this point. Thus, because $\delta(0,1) = 1$, this example proves that Calabi's constant $M_2$ satisfies $M_2 \geq 1$.

Since scalar curvature is additive under Riemannian products, the $m$-fold product of this solution with itself yields a solution on a domain in $\mathbb{R}^{2m}$ proving that $M_{2m} \geq \sqrt{m}$ even under the assumption ${\rm Hess}(u)\in {\rm Sp}(2m,\R)$. 
In the formalism of Section \ref{2.5}, this product solution is given by
$$Z({\alpha})={\rm diag}(\alpha_1,\ldots,\alpha_m) \in \mathfrak{H}_m,  \ \ \alpha = (\alpha_1,\ldots,\alpha_m) \in \widetilde{U}=\mathfrak{H} \times \cdots \times \mathfrak{H}.$$ 
\end{example}

\begin{example}[Extremal Schwarz example]\label{ex:extremal:schwarz}
For a fixed $R_0 > 0$ consider the holomorphic function
$$g(\alpha)=1-\sqrt{1-\frac{4\alpha}{R_0}},$$ 
where the branch of the square root is chosen so that $g(0)=0$. Set
$$\widetilde{U}=\left\{ \alpha=R_0\left(t\frac{e^{i\theta}}{2}-t^2\frac{e^{2i\theta}}{4}\right): t\in[0,1), \theta\in[0,2\pi)\right\} \subset \mathbb{C}.$$
This is the domain interior to a cardioid:

\begin{figure}[h]
\centering
\begin{tikzpicture}[scale=0.8]
\def\R{3.2}

\fill[yellow!25, domain=0:360, samples=500, smooth, variable=\t]
  plot ({\R/4*(2*cos(\t)-cos(2*\t))},
        {\R/4*(2*sin(\t)-sin(2*\t))});

\draw[->, thick] (-2.6,0) -- (2.5,0) node[right] {$\tilde{x} = {\rm Re}(\alpha)$};
\draw[->, thick] (0,-2.6) -- (0,2.6) node[above] {$\tilde{y} = {\rm Im}(\alpha)$};

\draw[very thick, yellow!60!orange, domain=0:360, samples=500, smooth, variable=\t]
  plot ({\R/4*(2*cos(\t)-cos(2*\t))},
        {\R/4*(2*sin(\t)-sin(2*\t))});

\fill (0,0) circle (0pt) node[below left] {$O$};
\fill (-2.4,0) circle (2pt) node[below left] {$-\frac{3R_0}{4}$};
\fill (0.8,0) circle (2pt) node[below left, xshift=1mm] {$\frac{R_0}{4}$};
\fill (0,2.03) circle (2pt) node[below left] {$\mu R_0$};

\node[black] at (-4,1.7) {$\mu = \frac{(1+\sqrt{3})\sqrt[4]{3}}{4\sqrt{2}}$};
\node[yellow!60!orange] at (1.8,1.7) {$\partial \widetilde{U}$};
\end{tikzpicture}
\end{figure}

\noindent It is easy to check that $g(\widetilde{U}) = \mathbb{D}$, the unit disk. Indeed,
$$\alpha=R_0\left(t\frac{e^{i\theta}}{2}-\frac{t^2e^{2i\theta}}{4}\right)  \Longrightarrow  g(\alpha) =  1-\sqrt{1-\frac{4\alpha}{R_0}}= 1-\sqrt{1-2te^{i\theta}+t^2e^{2i\theta}}= te^{i\theta}.
$$
Thus, by applying the inverse of the Cayley transformation to $g(\alpha)$, we obtain a holomorphic function defined on $\widetilde{U}$ whose image is the upper half-plane:
\begin{equation}\label{holschw}
Z(\alpha) = (\Gamma + i\Phi)(\alpha) = \mathscr{C}^{-1}(g(\alpha))= i\frac{1+g(\alpha)}{1-g(\alpha)}=-i+\frac{2i}{\sqrt{1-\frac{4\alpha}{R_0}}}.
\end{equation}
Let $h$ denote the primitive of this function given by
\begin{equation}\label{eq:schwarz_h} h(\alpha)=-i\alpha+iR_0\left(1-\sqrt{1-\frac{4\alpha}{R_0}}\right).\end{equation}
Following the construction of Section \ref{2.5}, we obtain a local diffeomorphism 
\begin{equation}\label{eqxy1}
\widetilde T(\tilde{x},\tilde{y})=\left(\operatorname{Im}(h(\tilde{x},\tilde{y})),\tilde{y}\right)=(x,y)
\end{equation}
and a convex local solution $u(x,y)$ of the Monge-Ampère equation satisfying
\begin{equation}\label{eqxy2}
   u_x=\tilde{x}, \quad u_y=-{\operatorname{Re}(h(\tilde{x},\tilde{y}))}.
\end{equation}
Since $\operatorname{Im}(h)_{\tilde{x}}=\Phi>0$ and since $\widetilde U$ is horizontally convex, the map $\widetilde{T}$ is injective and is therefore a diffeomorphism onto its image. Thus, $u$ is a convex solution of the Monge-Ampère equation defined on the entire image $U = \widetilde{T}(\widetilde{U})$.
Now introduce the Nitsche data
$$\sigma=u_x+x+i(u_y+y), \quad f(\sigma)=x-u_x-i(y-u_y).$$
By \eqref{eqxy2}, \eqref{eqxy1} and \eqref{eq:schwarz_h},
$$
\sigma  =  \alpha-ih(\alpha) = R_0\left(1 -\sqrt{1-\frac{4\alpha}{R_0}}\right), \quad
f(\sigma)  =  -\alpha-ih(\alpha) = \sigma - 2\alpha.
$$
The first equation gives $\alpha$ in terms of $\sigma$,
\begin{equation}\label{alphasig}
\alpha=\frac{\sigma}{2}-\frac{\sigma^2}{4R_0}.
\end{equation}
Substituting this into the second equation, we get
$$f(\sigma)= \frac{\sigma^2}{2R_0}, \quad f'(\sigma)=\frac{\sigma}{R_0}.$$ 
The map $\alpha \mapsto \sigma$ is a biholomorphism onto its image, defined on all of $\widetilde{U}$. By the definition of $\widetilde{U}$, every $\alpha \in \widetilde{U}$ can be written as
$$\alpha=R_0\left(t\frac{e^{i\theta}}{2}-t^2\frac{e^{2i\theta}}{4}\right), \quad t \in [0,1), \quad \theta \in [0,2\pi).$$
Then clearly $\sigma=R_0 te^{i\theta}$, so the image of $\widetilde{U}$ in the $\sigma$-plane is the disk $B_{R_0}(0) = T_3(U)$. Since $|f'(\sigma)| < 1$ by the remark after Lemma \ref{Holchart1}, the domain $\widetilde{U}$ cannot be extended, although this was already clear because we require that $\Phi >0$ and $(\mathscr{C}^{-1} \circ g)(\widetilde{U})$ is the entire upper half-plane.

We can also determine the image $U = \widetilde{T}(\widetilde{U})$ in the $(x,y)$-plane:
$$x=\operatorname{Re}(\sigma)-\operatorname{Re}(\alpha), \quad y=\operatorname{Im}(\alpha),$$
so in terms of the parameters $t,\theta$,
$$
x+iy = \frac{R_0t}{2}e^{i\theta} + \frac{R_0t^2}{4} e^{-2i\theta}.$$
For $t = 1$ this is the equation of a deltoid:

\vspace{-2mm}
\begin{figure}[h]
\centering
\begin{tikzpicture}[scale=0.8]
\begin{scope}
\def\R{3.2}

\def\tmin{60}
\def\tmax{120}

\fill[red!10, domain=0:360, samples=500, smooth, variable=\t]
  plot ({\R/4*(2*cos(\t)+cos(2*\t))},
        {\R/4*(2*sin(\t)-sin(2*\t))});

\draw[->, thick] (-2.8,0) -- (2.8,0) node[right] {$x$};
\draw[->, thick] (0,-2.4) -- (0,2.4) node[above] {$y$};

\draw[very thick, red!75!black, domain=0:360, samples=500, smooth, variable=\t]
  plot ({\R/4*(2*cos(\t)+cos(2*\t))},
        {\R/4*(2*sin(\t)-sin(2*\t))});

\draw[thick, gray!70] (0,0) circle ({\R/4});

\fill (0,0) circle (0pt) node[below left] {$O$};

\coordinate (Pmin) at ({\R/4*(2*cos(\tmin)+cos(2*\tmin))},
                       {\R/4*(2*sin(\tmin)-sin(2*\tmin))});

\coordinate (Pmax) at ({\R/4*(2*cos(\tmax)+cos(2*\tmax))},
                       {\R/4*(2*sin(\tmax)-sin(2*\tmax))});

\draw[thick, blue!70!black] (0,0) -- (Pmin);
\draw[thick, blue!70!black] (0,0) -- (Pmax);

\draw[
  thick,
  blue!70!black
]
(0,0) -- (Pmin)
node[pos=0.5, font=\small, xshift=8pt, yshift=17pt]
{$\frac{R_0}{4}$};

\draw[
  thick,
  blue!70!black
]
(0,0) -- (Pmax)
node[pos=0.5, font=\small, xshift=-16pt, yshift=-10pt]
{$\frac{3R_0}{4}$};

\node[red!75!black] at (1.5,1.5) {$\partial U$};
\end{scope}
\begin{scope}[xshift=7.0cm]
    \def\R{2}

\fill[blue!5] (0,0) circle (\R);

\draw[->, thick] (-\R-0.6,0) -- (\R+0.8,0) node[right] {${\rm Re}(\sigma)$};
\draw[->, thick] (0,-\R-0.6) -- (0,\R+0.8) node[above] {${\rm Im}(\sigma)$};

\draw[very thick, blue, domain=0:360, samples=300, smooth, variable=\t]
  plot ({\R*cos(\t)},{\R*sin(\t)});

\draw[thick, gray!70, domain=0.41422:1, samples=300, smooth, variable=\t]
  plot ({\R*\t*cos((1/3)*acos((1-4*\t^2-\t^4)/(4*\t^3)))},{\R*\t*sin((1/3)*acos((1-4*\t^2-\t^4)/(4*\t^3)))});
\draw[thick, gray!70, domain=0.41422:1, samples=300, smooth, variable=\t]
  plot ({\R*\t*cos((1/3)*(360-acos((1-4*\t^2-\t^4)/(4*\t^3))))},{\R*\t*sin((1/3)*(360-acos((1-4*\t^2-\t^4)/(4*\t^3))))});
\draw[thick, gray!70, domain=0.41422:1, samples=300, smooth, variable=\t]
  plot ({\R*\t*cos((1/3)*(360+acos((1-4*\t^2-\t^4)/(4*\t^3))))},{\R*\t*sin((1/3)*(360+acos((1-4*\t^2-\t^4)/(4*\t^3))))});
\draw[thick, gray!70, domain=0.41422:1, samples=300, smooth, variable=\t]
  plot ({\R*\t*cos((1/3)*(720-acos((1-4*\t^2-\t^4)/(4*\t^3))))},{\R*\t*sin((1/3)*(720-acos((1-4*\t^2-\t^4)/(4*\t^3))))});
\draw[thick, gray!70, domain=0.41422:1, samples=300, smooth, variable=\t]
  plot ({\R*\t*cos((1/3)*(720+acos((1-4*\t^2-\t^4)/(4*\t^3))))},{\R*\t*sin((1/3)*(720+acos((1-4*\t^2-\t^4)/(4*\t^3))))});
\draw[thick, gray!70, domain=0.41422:1, samples=300, smooth, variable=\t]
  plot ({\R*\t*cos((1/3)*(1080-acos((1-4*\t^2-\t^4)/(4*\t^3))))},{\R*\t*sin((1/3)*(1080-acos((1-4*\t^2-\t^4)/(4*\t^3))))});
  
\fill (0,0) circle (0pt) node[below left] {$O$};

\draw[thick, red!75!black] (0,0) -- ({\R*cos(60)},{\R*sin(60)})
node[pos=1, font=\small, xshift=5pt, yshift=9pt]
{$R_0$};

\draw[thick, red!75!black] (0,0) -- ({\R*0.41422},{0})
node[pos=1, font=\small, xshift=12pt, yshift=-10pt]
{$\nu R_0$};

\node[red!75!black] at (3.2,-1) {$\nu = \sqrt{2}-1$};

\draw[
  thick,
  blue!70!black
];

\node[blue!70!black] at (2.75,2.35) {$\partial T_{3}(U)$};
\end{scope}
\end{tikzpicture}
\end{figure}

Lastly, like the previous Example \ref{Exn=1}, this example also shows that $M_2 \geq 1$: we have $g(0) = 0$, hence $Z(0) = \Gamma(0) + i\Phi(0) = i$, and hence ${\rm Hess}(u)|_O = I_2$. In particular, $\delta(O)$ is the Euclidean inradius of the deltoid, which we can easily check is equal to $R_0/4$: 
$$x + iy =  \frac{R_0}{2}e^{i\theta}+\frac{R_0}{4}e^{-2i\theta} \;\,\Longrightarrow\;\,
 |x+iy| = \frac{R_0}{4}\sqrt{5+4\cos(3\theta)} \in \left[\frac{R_0}{4},\frac{3R_0}{4}\right].
$$
(With more work, one can also show that the constant $R$ in the sense of Lemma \ref{3NN}, i.e. the inradius of $T_3(B_{\delta(O)}(O))$, is given by $R = (\sqrt{2}-1)R_0$.) Moreover, from \eqref{f''}, 
$$\sqrt{R_g(O)} = 4 |\partial_\sigma \mathscr{C}(Z)(O)| = 2|\partial_\alpha g(O)| = \frac{4}{R_0} = \frac{1}{\delta(O)}.$$
Thus, again, the optimal constant $M_2$ in Theorem \ref{thm:main} cannot be smaller than $1$.
\end{example}

The relevance of Example \ref{ex:extremal:schwarz} is explained by the proof of the following result (cf. Theorem \ref{thm:main:3_not_sharp}).

\begin{theorem}\label{T2B}
    There exists no example realizing the bound $M_2 \leq 3$ of Theorem \ref{thm:main} for $m = 1$.
\end{theorem}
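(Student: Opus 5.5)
Assume for contradiction that equality $\sqrt{R_g(O)}=3/\delta(O)$ holds at some point, normalized as in \eqref{symplectic}. For $m=1$ the proof of Theorem \ref{thm:main} is a chain of three inequalities:
\[\sqrt{R_g(O)} = 4|\partial_\sigma\mathscr{C}(Z)(O)|,\qquad |\partial_\sigma\mathscr{C}(Z)(O)|\le \tfrac{1}{R},\qquad \delta(O)\le \tfrac{3R}{4}.\]
Here $f'=\mathscr{C}(Z)$ by Lemma \ref{Holchart1}, and the first inequality is the Cauchy/Schwarz bound on $B_R(O)$. Equality in the theorem forces equality in both of the last two. My plan is to show that these two equalities are incompatible.

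First, equality in the Schwarz-type bound. The function $f'$ is holomorphic on $B_R(O)$ with $f'(0)=0$ and $|f'|<1$. Then $|f''(0)|=1/R$ forces $f'(\sigma)=e^{i\phi}\sigma/R$ on $B_R(O)$, by the equality case of Schwarz's lemma. Integrating with $f(0)=0$ gives $f(\sigma)=e^{i\phi}\sigma^2/(2R)$. This is exactly the extremal Schwarz example, Example \ref{ex:extremal:schwarz}, rotated and with $R_0=R$. The rotation can be absorbed by a symplectic rotation of coordinates, and $f$ on $B_R$ determines $u$ near $O$ via $T_3^{-1}(\sigma)=(\sigma+\overline{f(\sigma)})/2$. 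Hence $u$ agrees on $T_3^{-1}(B_R(O))$ with that example.

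Second, compute $\delta(O)$ for this $f$. Here
\[T_3^{-1}(\sigma)=\frac{\sigma}{2}+\frac{e^{-i\phi}\bar\sigma^2}{4R}.\]
On $|\sigma|=R$ its modulus is $\frac{R}{4}\sqrt{5+4\cos(3\theta-\phi)}$. This attains the value $R/4$, and it is strictly less than $3R/4$ except at three directions. Now $\delta(O)\le 3R/4$ came from \eqref{xy-sigma}, where equality would require $|f(\sigma)|=R/2$ and alignment at a boundary point of $B_R$. I will argue that this cannot happen.

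By maximality of $R$ there is a point $\sigma_0\in\partial B_R(O)\cap\partial T_3(B_{\delta(O)}(O))$. Along the radial segment toward $\sigma_0$, the map $T_3^{-1}$ stays in $B_{\delta(O)}(O)$ and its image converges to $\partial B_{\delta(O)}(O)$. Hence
\[\delta(O)=|T_3^{-1}(\sigma_0)|=\frac{R}{4}\sqrt{5+4\cos(3\theta_0-\phi)}\le \frac{3R}{4}.\]
Equality would require $\cos(3\theta_0-\phi)=1$. At such $\sigma_0$, however, the explicit example shows that $T_3^{-1}$ extends beyond $\sigma_0$: the extremal solution lives on the whole deltoid. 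Moreover the ball $B_{3R/4}(O)$ would then have to avoid the deltoid's boundary, whose inradius is only $R/4<3R/4$. Concretely, $B_{\delta(O)}(O)\subset U$ together with $u=$ the extremal example near $O$ means that $T_3(B_{\delta(O)})$ contains the image of the deltoid-cut portion. One then checks directly that the inradius of $T_3(B_{3R/4}(O))$, computed with the extremal $u$ continued analytically wherever $B_{3R/4}\subset U$ permits, would exceed $R$. For Example \ref{ex:extremal:schwarz} the paper notes $R=(\sqrt2-1)R_0$ for $\delta=R_0/4$, so $\delta/R\approx 0.6<3/4$. This contradicts the maximality of $R$.

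The delicate step is this last one: making the unique-continuation argument rigorous. The key fact is that $u$ is real-analytic, so $u$ coincides with the extremal solution on the connected set $B_{\delta(O)}(O)$, wherever the latter is defined. One must then show that the extremal solution cannot be continued to a ball of radius $3R/4$ with the maximal inscribed $\sigma$-disk still equal to $R$. I would handle this by observing that $|f'|<1$ is necessary, as in the remark after Lemma \ref{Holchart1}, and $f'=e^{i\phi}\sigma/R$ reaches modulus $1$ on $\partial B_R$. So $T_3(B_{\delta(O)}(O))\subset B_R(O)$ exactly, which forces
\[\delta(O)=\min_{|\sigma|=R}|T_3^{-1}(\sigma)|=R/4<3R/4,\]
a contradiction.
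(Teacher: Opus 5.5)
Your final paragraph is the paper's argument. Equality forces $|f''(O)|=1/R$ and $\delta(O)=3R/4$, and the equality case of the Schwarz lemma then gives $f'(\sigma)=e^{i\phi}\sigma/R$. Since $|f'|<1$ on the connected open set $T_3(B_{\delta(O)}(O))$, the identity theorem (or continuity of $f'$ at any point of $\partial B_R(O)$ lying in that set) yields $T_3(B_{\delta(O)}(O))=B_R(O)$. So $B_{\delta(O)}(O)$ would have to be the deltoid of Example~\ref{ex:extremal:schwarz}, whose inradius is $R/4\neq 3R/4$.

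The intermediate detour should be dropped. This covers ``$T_3^{-1}$ extends beyond $\sigma_0$'', the claim that the inradius of $T_3(B_{3R/4}(O))$ would exceed $R$, and the $(\sqrt2-1)R_0$ comparison. These steps are confused: at a cusp direction $T_3^{-1}(\sigma_0)$ is a boundary point where $|f'|=1$, so nothing extends there. They are also unnecessary for the contradiction.
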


\begin{proof}
    Suppose by contradiction that there exists a convex solution $u:U \subset \R^2\to \mathbb{R}$ of the Monge-Ampère equation \eqref{MA}, normalized as in \eqref{symplectic}, such that 
$$\sqrt{R_g(O)}=\frac{3}{\delta(O)}.$$
Replace $U$ by $B_{\delta(O)}(O)$. Since $U$ is then convex, the map $T_{3}:U\to T_{3}(U)$ defined by \eqref{T3m} is injective, and is therefore a diffeomorphism onto its image.
Let $f(\sigma)$ be the holomorphic function introduced in Lemma \ref{Holchart1}. By the same lemma and by \eqref{f''},
$$|f''(O)| = \frac{1}{4}\sqrt{R_g(O)}=\frac{3}{4\delta(O)}.$$
Let $R>0$ be the maximal radius such that $B_{R}(O)\subset T_{3}(U)$. Since $f'(\sigma)$ is holomorphic with $f'(O)=0$ and $|f'(\sigma)| < 1$, the Schwarz lemma yields that
$$|f''(O)|\leq \frac{1}{R}.$$
Combining the previous two inequalities with Lemma \ref{3NN}, we get $\delta(O)=3R/4$ and
$|f''(O)|=1/R$. Consequently, the equality case in the Schwarz lemma is attained. Thus, without loss of generality,
$$f'(\sigma)=\frac{\sigma}{R},\quad f(\sigma)=\frac{\sigma^{2}}{2R}.$$
Moreover, $|f'(\sigma)|<1$ implies $|\sigma|<R$, i.e. $T_{3}(U)\subset B_{R}(O)$; but since $B_{R}(O)\subset T_{3}(U)$ by definition, we conclude that $T_{3}(U)=B_{R}(O)$. This means we are in the setting of Example \ref{ex:extremal:schwarz} with $R_0 = R$, and hence $\delta(O) = R_0/4 = R/4$ by the discussion there (and $U$ is not a disk). This is a contradiction.
\end{proof}

\subsection{Curvature estimates in two-dimensional Monge-Ampère metrics} 

In the remainder of this paper we are going to exploit the fact that the partial Legendre transform also yields a convenient representation of the Monge-Ampère metric tensor: by Corollary \ref{MAP}, any Monge-Ampère metric $g$ on a domain in $\mathbb{R}^2$ is isometric to a metric of the form
$$g = \Phi(\tilde{x},\tilde{y})(d\tilde{x}^2+d\tilde{y}^2),$$
where $\Phi$ is a positive harmonic function, and by Proposition \ref{MACorr} the converse holds as well.

\begin{lemma} \label{GC}
We can express the scalar curvature of the Monge-Ampère metric $g$ as
\begin{align*}
    R_g=\frac{g(d\Phi,d \Phi)}{\Phi^2}\geq0.
\end{align*}
\end{lemma}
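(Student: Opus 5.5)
The plan is to compute directly the curvature of a conformally flat surface metric. By Corollary \ref{MAP} (with $m=1$) and the discussion preceding the lemma, the Monge-Ampère metric $g$ is isometric to $\Phi(d\tilde{x}^2+d\tilde{y}^2)$ with $\Phi>0$ harmonic with respect to the flat Laplacian $\Delta_0=\partial_{\tilde{x}}^2+\partial_{\tilde{y}}^2$; harmonicity is Remark \ref{polyharm}, or equivalently $\Phi={\rm Im}(Z)$ with $Z$ holomorphic by Lemma \ref{Zholo}. Scalar curvature is invariant under isometries, so it suffices to compute $R$ for $\Phi\,g_0$, where $g_0$ is the flat metric.

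For a surface with $g=e^{2w}g_0$, the Gauss curvature is $K=-e^{-2w}\Delta_0 w$, and the scalar curvature is $R=2K$. Here $w=\tfrac12\log\Phi$. Then
$$\Delta_0 w=\tfrac12\left(\frac{\Delta_0\Phi}{\Phi}-\frac{|\nabla_0\Phi|^2}{\Phi^2}\right)=-\frac{|\nabla_0\Phi|^2}{2\Phi^2},$$
using $\Delta_0\Phi=0$. Hence
$$R=2\cdot\Phi^{-1}\cdot\frac{|\nabla_0\Phi|^2}{2\Phi^2}=\frac{|\nabla_0\Phi|^2}{\Phi^3}.$$
Finally, for any function $F$ we have $g(dF,dF)=\Phi^{-1}|\nabla_0 F|^2$ because $g^{-1}=\Phi^{-1}g_0^{-1}$. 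So $R=g(d\Phi,d\Phi)/\Phi^2\geq 0$.

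The main point to check is normalization. Lemma \ref{SCT} uses Calabi's convention for $R_g$ (the $\tfrac14$ factor), so I must confirm that this convention agrees with the Riemannian scalar curvature $2K$ of the surface. I would check this on Example \ref{Exn=1}. There $\Phi=\tilde y$, and at the point corresponding to $(x,y)=(0,1)$, i.e. $\tilde y=1$, the formula gives $R=|\nabla_0\tilde y|^2/\tilde y^3=1$. This matches the value $R_g=1$ computed there from Lemma \ref{SCT}, so no extra constant arises. Alternatively one can verify the normalization via \eqref{f''} at a normalized point.
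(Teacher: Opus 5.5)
Your proof is correct and is essentially the paper's argument: the paper writes $\Phi=e^{H}$, uses $R_g=-\Delta_g H$ (your $2K$ with $H=2w$), and expands $0=\Delta_g e^{H}=e^{H}\bigl(g(dH,dH)+\Delta_g H\bigr)$, which is the same identity $\Delta_0\log\Phi=-|\nabla_0\Phi|^2/\Phi^2$ that you compute directly. Your normalization check is consistent (it also matches $K=1/(2\tilde y^{3})$ in the Bryant example), but it is not strictly needed, since the $R_g$ of Lemma \ref{SCT} comes from the O'Neill formulas and is already the genuine Riemannian scalar curvature.
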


\begin{proof}
Since $\Phi$ is positive, we can write $\Phi=e^{H}$. Then, by standard general formulas,
\begin{equation}\label{LBO+CHh}
    \Delta_g=e^{-H}(\partial_{\tilde{x}}^2+\partial_{\tilde{y}}^2), \quad R_g=-\Delta_g H.
\end{equation}
Since $\Phi$ is harmonic in the coordinates $\tilde{x},\tilde{y}$, it is harmonic with respect to $g$ as well. Thus,
\begin{align*}\begin{split}
0 &= \Delta_g e^{H} =  e^{-H}((e^{H})_{\tilde{x}\tilde{x}}+ (e^{H})_{\tilde{y}\tilde{y}})= e^{-H}(e^{H}(H_{\tilde{x}}^2+H_{\tilde{y}}^2) + e^{H}(H_{\tilde{x}\tilde{x}}+H_{\tilde{y}\tilde{y}})) \\
& = e^{H}g(dH, dH) +e^{H}\Delta_g H = e^{H}g(dH, dH) -e^{H}R_g.
\end{split}\end{align*}
This implies the claim because $dH = \Phi^{-1}d\Phi$.
\end{proof}

In the rest of this paper we will simply write $\Delta, R$ instead of $\Delta_g, R_g$. Then the following theorem is our main result in this section (cf. Theorem \ref{main:curv_diffeq}).

\begin{theorem}\label{alpha>0}
On the open set $\{R>0\}$ we have that
$$\Delta R^{\alpha}= (\alpha^2N+3\alpha)R^{\alpha+1}$$
for every $\alpha > 0$, where
\begin{align*}\begin{split}N&:=4h^{-2}\Phi^2\biggl(\biggl(\Phi_{\tilde{x}\tilde{x}}-\frac{3}{2}\frac{\Phi_{\tilde{x}}^2-\Phi_{\tilde{y}}^2}{\Phi}\biggr)^2+\biggl(\Phi_{\tilde{x}\tilde{y}}-3\frac{\Phi_{\tilde{x}}\Phi_{\tilde{y}}}{\Phi}\biggr)^2\biggr)\geq0,\\
h&:= \Phi_{\tilde{x}}^2+\Phi_{\tilde{y}}^2 = \Phi^3 R > 0.\end{split}\end{align*}
In particular, on the set $\{R > 0\}$,
$$\Delta R^{\alpha}\geq 3\alpha R^{\alpha+1}.$$
Moreover, this inequality holds globally in the lower barrier sense.
\end{theorem}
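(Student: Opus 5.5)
We work throughout in the isothermal coordinates $(\tilde{x},\tilde{y})$ in which $g=\Phi(d\tilde{x}^2+d\tilde{y}^2)$ with $\Phi>0$ harmonic. By Lemma \ref{GC}, $R=\Phi^{-3}h$ with $h=\Phi_{\tilde{x}}^2+\Phi_{\tilde{y}}^2=|\partial_\alpha\Phi|^2\cdot 4$ (up to the normalization $\partial_\alpha=\frac12(\partial_{\tilde{x}}-i\partial_{\tilde{y}})$). Let $F:=2\partial_\alpha\Phi=\Phi_{\tilde{x}}-i\Phi_{\tilde{y}}$, which is holomorphic because $\Phi$ is harmonic, so $h=|F|^2$ and $\log h$ is harmonic on $\{h>0\}=\{R>0\}$. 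Using $\Delta=\Phi^{-1}\Delta_0$ with $\Delta_0$ the flat Laplacian, we write $R^\alpha=e^{\alpha L}$ with $L:=\log h-3\log\Phi$. Then
$$\Delta R^\alpha=\Phi^{-1}e^{\alpha L}\bigl(\alpha\Delta_0 L+\alpha^2|\nabla_0 L|^2\bigr).$$
Since $\Delta_0\log h=0$ and $\Delta_0\log\Phi=-|\nabla_0\Phi|^2/\Phi^2=-h/\Phi^2$, we get $\Delta_0 L=3h/\Phi^2$. Hence the linear term is $\alpha\Phi^{-1}R^\alpha\cdot 3h\Phi^{-2}=3\alpha R^{\alpha+1}$, which produces the $3\alpha$.

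For the quadratic term we must show $\Phi^{-1}|\nabla_0L|^2=N R$, i.e. $|\nabla_0 L|^2=N h\Phi^{-2}$. In complex notation $|\nabla_0 L|^2=4|\partial_\alpha L|^2$ and
$$\partial_\alpha L=\frac{F'}{F}-\frac{3F}{2\Phi}=\frac{1}{F}\Bigl(F'-\frac{3F^2}{2\Phi}\Bigr),$$
where we used $\partial_\alpha\log\bar F=0$ and $\partial_\alpha\Phi=F/2$. Now $F'=\Phi_{\tilde{x}\tilde{x}}-i\Phi_{\tilde{x}\tilde{y}}$, using harmonicity to rewrite the $\tilde{x}$-derivative, and $F^2=(\Phi_{\tilde{x}}^2-\Phi_{\tilde{y}}^2)-2i\Phi_{\tilde{x}}\Phi_{\tilde{y}}$. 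Therefore
$$\Bigl|F'-\frac{3F^2}{2\Phi}\Bigr|^2=\Bigl(\Phi_{\tilde{x}\tilde{x}}-\tfrac32\tfrac{\Phi_{\tilde{x}}^2-\Phi_{\tilde{y}}^2}{\Phi}\Bigr)^2+\Bigl(\Phi_{\tilde{x}\tilde{y}}-3\tfrac{\Phi_{\tilde{x}}\Phi_{\tilde{y}}}{\Phi}\Bigr)^2.$$
This gives $|\nabla_0L|^2=4h^{-1}(\cdots)=N h\Phi^{-2}$ exactly, matching the stated $N$. Since $N\ge0$, we obtain $\Delta R^\alpha\ge 3\alpha R^{\alpha+1}$ on $\{R>0\}$. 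The main computational point is keeping the normalizations of $\partial_\alpha$ consistent; everything else is routine.

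For the barrier statement, on $\{R>0\}$ the function is smooth and the inequality is classical. At a point $p$ with $R(p)=0$ we need, for each $\epsilon>0$, a smooth lower barrier $\psi\le R^\alpha$ near $p$ with $\psi(p)=R^\alpha(p)=0$ and $\Delta\psi(p)\ge 3\alpha R^{\alpha+1}(p)-\epsilon=-\epsilon$. The constant $\psi\equiv0$ works because $R^\alpha\ge0$. This is the main (though easy) subtlety: zeros of $R$ are isolated unless $\Phi$ is affine, in which case $R\equiv0$, but positivity alone already suffices, so no analysis of the zero set is needed.
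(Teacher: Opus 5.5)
Your proof is correct, but it organizes the computation differently from the paper.

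The paper expands $(h^\alpha\Phi^{-3\alpha})_{\tilde x\tilde x}+(h^\alpha\Phi^{-3\alpha})_{\tilde y\tilde y}$ directly. It uses harmonicity of $\Phi$ to evaluate $h_{\tilde x}^2+h_{\tilde y}^2$, $h_{\tilde x\tilde x}+h_{\tilde y\tilde y}$ and $h_{\tilde x}\Phi_{\tilde x}+h_{\tilde y}\Phi_{\tilde y}$. It then asserts that a direct expansion identifies the $\alpha^2$-coefficient with the sum of squares $N$; the constant $9$ comes from $(\Phi_{\tilde x}^2-\Phi_{\tilde y}^2)^2+4\Phi_{\tilde x}^2\Phi_{\tilde y}^2=h^2$.

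Your splitting $\Delta e^{\alpha L}=\Phi^{-1}e^{\alpha L}(\alpha\Delta_0L+\alpha^2|\nabla_0L|^2)$ makes the structure transparent:
\begin{itemize}
\item The linear term is the identity $\Delta\log R=3R$. It follows from harmonicity of $\log|F|^2$ together with $-\Delta\log\Phi=R$, the conformal curvature formula used in Lemma \ref{GC}.
\item The quadratic coefficient is $N=|d\log R|_g^2/R=4\Phi^2h^{-2}\bigl|F'-\tfrac{3F^2}{2\Phi}\bigr|^2$. So the sum-of-squares form and $N\geq 0$ come out automatically rather than being checked by expansion.
\end{itemize}
Your form also characterizes the equality case. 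At a point with $R>0$, $\Delta R^\alpha=3\alpha R^{\alpha+1}$ holds if and only if $dR=0$ there. This explains Example \ref{n=2}: $(1/2,1/2)$ is a critical point of $R=4(\tilde x^2+\tilde y^2)(1+2\tilde x\tilde y)^{-3}$. The paper's route avoids complex notation but leaves the final algebraic identity to the reader. Your barrier argument with the constant function $0$ is the same as the paper's.

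Two minor corrections, neither affecting the argument:
\begin{itemize}
\item In your last paragraph, the zeros of $R$ are the zeros of the holomorphic function $F$. They are isolated unless $\Phi$ is constant on a component, not merely affine. A non-constant affine $\Phi$ such as Bryant's $\Phi=\tilde y$ has $R=\tilde y^{-3}>0$, not $R\equiv 0$.
\item You use $\alpha$ both as the exponent and as the complex coordinate in $\partial_\alpha$. Rename one of them.
\end{itemize}
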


\begin{proof}
We work on the open set where $R>0$. By Lemma \ref{GC}, 
$$R=\frac{g(d\Phi,d\Phi)}{\Phi^2}=\frac{\Phi_{\tilde{x}}^2+\Phi_{\tilde{y}}^2}{\Phi^3}=\frac{h}{\Phi^3}>0.$$
Using \eqref{LBO+CHh}, the computation of $\Delta R^{\alpha}$ reduces to computing the Euclidean Laplacian of $R^\alpha=h^{\alpha}\Phi^{-3\alpha}$. We compute the derivatives in the variable $\tilde{x}$; the derivatives with respect to $\tilde{y}$ are analogous. Thus,
$$\begin{array}{ccl}
 (R^{\alpha})_{\tilde{x}}  =\alpha h^{\alpha-1}h_{\tilde{x}}\Phi^{-3\alpha}-3\alpha h^{\alpha}\Phi^{-3\alpha-1}\Phi_{\tilde{x}}.
\end{array}$$
Differentiating again, we get
\begin{align*}\begin{split}
 (R^{\alpha})_{\tilde{x}\tilde{x}} &= \alpha(\alpha-1)h^{\alpha-2}h_{\tilde{x}}^2\Phi^{-3\alpha}+\alpha h^{\alpha-1}h_{\tilde{x}\tilde{x}}\Phi^{-3\alpha}\\
     &-6\alpha^2 h^{\alpha-1}h_{\tilde{x}}\Phi^{-3\alpha-1}\Phi_{\tilde{x}} +3\alpha(3\alpha+1)h^{\alpha}\Phi^{-3\alpha-2}\Phi_{\tilde{x}}^2-3\alpha h^{\alpha}\Phi^{-3\alpha-1}\Phi_{\tilde{x}\tilde{x}}.    
\end{split}\end{align*}
Combining the $\tilde{x}$- and $\tilde{y}$-derivatives and using that $\Phi$ is harmonic with respect to $(\tilde{x},\tilde{y})$, we obtain
\begin{align*}\begin{split}
  \Phi\Delta R^{\alpha}  & =  \displaystyle (R^{\alpha})_{\tilde{x}\tilde{x}} + (R^{\alpha})_{\tilde{y}\tilde{y}} \\
    & =  \displaystyle \alpha(\alpha-1)h^{\alpha-2}(h_{\tilde{x}}^2+h_{\tilde{y}}^2)\Phi^{-3\alpha}+\alpha h^{\alpha-1}(h_{\tilde{x}\tilde{x}}+h_{\tilde{y}\tilde{y}})\Phi^{-3\alpha} \\
      &  \displaystyle -6\alpha^2 h^{\alpha-1}\Phi^{-3\alpha-1}(h_{\tilde{x}}\Phi_{\tilde{x}}+h_{\tilde{y}}\Phi_{\tilde{y}})+3\alpha(3\alpha+1)h^{\alpha+1}\Phi^{-3\alpha-2}.
\end{split}\end{align*}
Using the harmonicity of $\Phi$, we can calculate that
\begin{align*}\begin{split}
   h_{\tilde{x}}^2+h_{\tilde{y}}^2  & =   4h(\Phi_{\tilde{x}\tilde{x}}^2+\Phi_{\tilde{x}\tilde{y}}^2),\\
    h_{\tilde{x}\tilde{x}}+h_{\tilde{y}\tilde{y}} & =  4(\Phi_{\tilde{x}\tilde{x}}^2+\Phi_{\tilde{x}\tilde{y}}^2), \\
   h_{\tilde{x}}\Phi_{\tilde{x}}+h_{\tilde{y}}\Phi_{\tilde{y}}  & = 2\Phi_{\tilde{x}\tilde{x}}(\Phi_{\tilde{x}}^2-\Phi_{\tilde{y}}^2)+4\Phi_{\tilde{x}}\Phi_{\tilde{y}}\Phi_{\tilde{x}\tilde{y}}.
\end{split}\end{align*}
Thus, by substitution,
\begin{align*}\begin{split}
  \Phi\Delta R^{\alpha} 
      & =  \alpha^2[4h^{-2}\Phi^2(\Phi_{\tilde{x}\tilde{x}}^2+\Phi_{\tilde{x}\tilde{y}}^2)-12h^{-2}\Phi(\Phi_{\tilde{x}\tilde{x}}(\Phi_{\tilde{x}}^2-\Phi_{\tilde{y}}^2)+2\Phi_{\tilde{x}}\Phi_{\tilde{y}}\Phi_{\tilde{x}\tilde{y}})+9]h^{\alpha+1}\Phi^{-3\alpha-2}\\
      &\displaystyle +3\alpha h^{\alpha+1}\Phi^{-3\alpha-2}.
\end{split}\end{align*}
A direct expansion shows that $N$, defined as a sum of squares as above, is actually equal to the term in square brackets, so the claimed formula for $\Delta R^\alpha$ follows. That $\Delta R^\alpha \geq 3\alpha R^{\alpha + 1}$ continues to hold in the barrier sense at any point $p$ with $R(p) = 0$ is trivial because $R \geq 0$ everywhere by Lemma \ref{GC}, so the constant function zero serves as a lower barrier for $R^\alpha$ at $p$.
\end{proof}

\begin{example}\label{n=2} 
Consider the harmonic function $\Phi(\tilde{x},\tilde{y})=1+2\tilde{x}\tilde{y}$ restricted to the domain $\{\Phi > 0\}$. Since $p = (1/2, 1/2)$ is not a critical point of $\Phi$, we have that $R(p)>0$ by Lemma \ref{GC}. Moreover, it is easy to see that $N(p)=0$. Thus, $\Delta R^\alpha = 3\alpha R^{\alpha+1}$ can hold even at a point where $R > 0$.
\end{example}

\subsection{Comparison with some results of Calabi}

Our Theorem \ref{alpha>0} improves a result due to Calabi \cite{EC}, which was proved using a different method, i.e. without using the conformal representation of $g$. In this section we will compare the two results and derive some consequences.

\begin{lemma}[Calabi {\cite[Eq. 2.17--2.18]{EC}}]\label{alp}
    Let $g$ be the Hessian metric associated with a smooth convex solution $u$ of the real Monge-Ampère equation
\eqref{MA} on a domain in $\mathbb{R}^n$. Let $\Delta, R$ denote the Laplace-Beltrami operator and the scalar curvature of $g$, respectively. Then we have that
    $$\displaystyle \frac{1}{2}\Delta R\geq \frac{n+1}{n(n-1)}R^2+A_{ijk,\ell}A^{ijk,\ell}, \quad g(d R, d R)=4A^{abc,i}A_{abc}A_{jk\ell,i}A^{jk\ell},$$
    where $A_{ijk}$ is the tensor defined by
    $$A_{ijk}:=-\frac{1}{2} \frac{\partial^3 u}{\partial x_{i}\partial x_{j}\partial x_{k}},$$
    and it is such that
    $$g^{ij}A_{ijk}=0, \quad A_{ijk,\ell}=A_{ij\ell,k}.$$
\end{lemma}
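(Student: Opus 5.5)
The plan is to compute everything tensorially on $(U,g)$, using the standard structure of Hessian metrics. The key steps, in order, are the basic identities for $A$, the gradient formula, a Bochner formula reducing $\Delta R$ to a quartic expression in $A$, and finally an algebraic inequality for that quartic expression, which is the main obstacle.

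\textbf{Step 1: basic identities for $A$.} With $g_{ij}=u_{ij}$, the Christoffel symbols of the first kind are $\Gamma_{ij,k}=\tfrac12 u_{ijk}=-A_{ijk}$, hence $\Gamma^k_{ij}=-A_{ij}{}^k$. Differentiating $\log\det(u_{ij})=0$ gives $g^{ij}u_{ijk}=0$, i.e.\ $g^{ij}A_{ijk}=0$. For the Codazzi symmetry, I would write
$$A_{ijk,\ell}=-\tfrac12 u_{ijk\ell}+A_{\ell i}{}^pA_{pjk}+A_{\ell j}{}^pA_{ipk}+A_{\ell k}{}^pA_{ijp}.$$
The first term is totally symmetric. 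The quadratic terms, summed, are symmetric under $k\leftrightarrow\ell$, because the term $A_{\ell k}{}^pA_{ijp}$ is symmetric by itself and the other two swap into one another after using the symmetry of $A$. Hence $A_{ijk,\ell}$ is totally symmetric. Next I would compute the curvature from the Hessian-metric formula
$$R_{ijk\ell}=\tfrac14 g^{pq}(u_{i\ell p}u_{jkq}-u_{ikp}u_{j\ell q})=A_{i\ell}{}^pA_{jkp}-A_{ik}{}^pA_{j\ell p}$$
(up to the sign convention). This gives ${\rm Ric}_{ij}=A_{ipq}A_j{}^{pq}$ and $R=|A|^2$, consistent with Lemma~\ref{SCT}. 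If we write $A_i$ for the symmetric endomorphism $(A_i)_j{}^k=A_{ij}{}^k$, then the curvature operator is essentially $-[A_i,A_j]$.

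\textbf{Step 2: the gradient formula.} Since $R=A_{abc}A^{abc}$, we get $R_{,i}=2A^{abc}A_{abc,i}$. Squaring gives the stated identity $g(dR,dR)=4A^{abc,i}A_{abc}A_{jk\ell,i}A^{jk\ell}$ immediately.

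\textbf{Step 3: Bochner formula for $\Delta R$.} First,
$$\tfrac12\Delta R=A_{ijk,\ell}A^{ijk,\ell}+A^{ijk}A_{ijk,\ell}{}^{\ell}.$$
By Codazzi symmetry, $A_{ijk,\ell\ell}=A_{ij\ell,k\ell}$. Commuting the derivatives with the Ricci identity gives
$$A_{ij\ell,k\ell}=A_{ij\ell,\ell k}+(\text{curvature}*A).$$
The first term vanishes because $A_{ij\ell,}{}^{\ell}=(g^{\ell m}A_{ij\ell})_{,m}$ is a derivative of the trace, which is zero. Substituting the Step 1 expressions for $R_{ijk\ell}$ and ${\rm Ric}$, the remaining term $A^{ijk}A_{ijk,\ell}{}^\ell$ becomes a quartic expression $Q(A)$. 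I expect it to take the form $Q(A)=|{\rm Ric}|^2+\tfrac12\sum_{i,j}|[A_i,A_j]|^2$, or a similar combination of manifestly nonnegative terms.

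\textbf{Step 4: the algebraic inequality (main obstacle).} It remains to prove $Q(A)\geq\frac{n+1}{n(n-1)}|A|^4$ for every totally symmetric, trace-free $3$-tensor $A$ on $\mathbb{R}^n$. The easy bound $|{\rm Ric}|^2\geq R^2/n$ alone gives only the constant $1/n$, so the commutator term must make up the difference $\frac{2}{n(n-1)}R^2$.

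My first attempt would be to diagonalize ${\rm Ric}$ in an orthonormal frame. I would then split $A$ into its components $A_{iii}$, $A_{iij}$ and $A_{ijk}$ (distinct indices), and use trace-freeness ($\sum_i A_{iij}=0$) together with Cauchy--Schwarz to bound the commutator sum from below. The extremal configurations should be highly symmetric, for example the tensor attached to the Bryant example or its products, and I would check that equality is approached there to confirm the constant. If the direct estimate proves too messy, the fallback is a Lagrange-multiplier argument: minimize $Q$ on the unit sphere $\{|A|=1\}$ of trace-free symmetric tensors, using $O(n)$-invariance to reduce the number of free parameters, and check that the minimum equals $\frac{n+1}{n(n-1)}$.
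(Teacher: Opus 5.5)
The paper does not prove this lemma; it quotes it from Calabi, so your proposal has to stand on its own. Steps 1 and 2 are correct. The Christoffel symbols, $g^{ij}A_{ijk}=0$, the total symmetry of $A_{ijk,\ell}$, $R=|A|^2$, ${\rm Ric}_{ij}=A_{ipq}A_j{}^{pq}$ and the gradient identity all check out. There is one slip in Step 3: $g^{\ell m}A_{ij\ell,m}$ is a divergence, not a derivative of the trace. You need the total symmetry of $\nabla A$ once more to rewrite it as $(g^{\ell m}A_{\ell m i})_{,j}=0$.

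The genuine gap is that the inequality, which is the real content of the lemma, is never established. Step 3 ends with a guessed form of $Q(A)=A^{ijk}A_{ijk,\ell}{}^{\ell}$, and Step 4 is only a list of possible strategies. Moreover, the guess has the wrong coefficient. Carrying out the Ricci identity with your curvature formula gives
\[
Q(A)=3|{\rm Ric}|^2-2\sum_{i,j}{\rm tr}(A_iA_jA_iA_j)=|{\rm Ric}|^2+\sum_{i,j}|[A_i,A_j]|^2 .
\]
The second equality uses $\sum_i A_i^2={\rm Ric}$ and $|[A_i,A_j]|^2=2\,{\rm tr}(A_i^2A_j^2)-2\,{\rm tr}(A_iA_jA_iA_j)$. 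With your coefficient $\frac12$ in front of the commutator sum, the target of Step 4 is false already for $n=2$. There $|{\rm Ric}|^2=\frac12R^2$ and $\sum_{i,j}|[A_i,A_j]|^2=R^2$, so your $Q$ would equal $R^2$, while the lemma requires $\frac32R^2$. In fact $Q=\frac32R^2$ identically when $n=2$.

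The missing idea for Step 4 is one you already wrote down in Step 1. Since $R_{ijk\ell}=\pm[A_i,A_j]_{\ell k}$, the commutator sum is exactly $|{\rm Rm}|^2$, so $Q=|{\rm Ric}|^2+|{\rm Rm}|^2$. No analysis of trace-free cubic forms is needed: no diagonalizing Ricci, no Lagrange multipliers.

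Two Cauchy--Schwarz inequalities, valid for any curvature tensor, finish the argument. Projecting ${\rm Ric}$ onto $g$ gives $|{\rm Ric}|^2\geq R^2/n$. Projecting ${\rm Rm}$ onto $E_{ijk\ell}=g_{i\ell}g_{jk}-g_{ik}g_{j\ell}$, for which $|E|^2=2n(n-1)$ and $|\langle {\rm Rm},E\rangle|=2R$, gives $|{\rm Rm}|^2\geq \frac{2R^2}{n(n-1)}$. Since $\frac1n+\frac{2}{n(n-1)}=\frac{n+1}{n(n-1)}$, the inequality follows.

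This also shows that equality at a point forces ${\rm Rm}$ to have constant curvature there. So the Bryant example and its products, which do not have constant curvature once $n\geq 3$, are not the extremal configurations your Step 4 plan proposes to test.
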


\begin{proposition}[Calabi {\cite[Eq. 2.15]{EC}} for $\alpha=1/2$]\label{prop:DiffIneqR}
For all $\alpha \geq 1/2$ the inequality
\begin{equation}\label{LE}
\Delta R^{\alpha} \geq 2\alpha\frac{n+1}{n(n-1)} R^{\alpha+1}
\end{equation}
holds in the lower barrier sense.
\end{proposition}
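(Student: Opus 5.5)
Throughout, write $\|A\|^2 := A_{ijk}A^{ijk}$. By Lemma \ref{SCT} and the definition of $A_{ijk}$ in Lemma \ref{alp}, we have $R = \|A\|^2$ (the factor $\frac14$ in Lemma \ref{SCT} is absorbed by the $-\frac12$ in $A_{ijk}$). The plan is to compute $\Delta R^\alpha$ on the open set $\{R>0\}$ via the chain rule, feed in Calabi's two identities from Lemma \ref{alp}, and control the gradient term by Cauchy--Schwarz. The chain rule gives
$$\Delta R^\alpha = \alpha R^{\alpha-1}\Delta R + \alpha(\alpha-1)R^{\alpha-2}g(dR,dR).$$
By Lemma \ref{alp}, the first term is at least
$$2\alpha R^{\alpha-1}\Bigl(\tfrac{n+1}{n(n-1)}R^2 + \|\nabla A\|^2\Bigr), \qquad \|\nabla A\|^2 := A_{ijk,\ell}A^{ijk,\ell}.$$

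For $\alpha\geq 1$ the gradient term is nonnegative, so the claim is immediate. For $\tfrac12\leq\alpha<1$ the coefficient $\alpha(\alpha-1)$ is negative, so we need an upper bound on $g(dR,dR)$. Lemma \ref{alp} gives $g(dR,dR) = 4|V|^2$ with $V_i := A^{abc}A_{abc,i}$. Cauchy--Schwarz in the indices $abc$ yields $|V_i|^2 \leq \|A\|^2\sum_{abc}|A_{abc,i}|^2$, and summing over $i$ gives
$$g(dR,dR)\leq 4R\,\|\nabla A\|^2.$$
Substituting this,
$$\Delta R^\alpha \geq 2\alpha\tfrac{n+1}{n(n-1)}R^{\alpha+1} + \bigl(2\alpha+4\alpha(\alpha-1)\bigr)R^{\alpha-1}\|\nabla A\|^2.$$
The bracket equals $2\alpha(2\alpha-1)$, which is $\geq 0$ exactly when $\alpha\geq\tfrac12$. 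This is precisely where the threshold $\tfrac12$ enters. So \eqref{LE} holds pointwise wherever $R>0$.

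The step I expect to need the most care is extending the inequality to points $p$ with $R(p)=0$, since $R^\alpha$ may fail to be smooth there for $\alpha<1$. I would handle this exactly as at the end of the proof of Theorem \ref{alpha>0}. Because $R\geq 0$ everywhere (Lemma \ref{SCT}), the constant function $0$ is a smooth lower barrier for $R^\alpha$ at $p$, touching it at $p$. Its Laplacian is $0$, which equals $2\alpha\frac{n+1}{n(n-1)}R(p)^{\alpha+1}=0$. Hence \eqref{LE} holds in the lower barrier sense at such points, and globally.
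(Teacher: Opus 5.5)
Your proposal is correct and follows essentially the same route as the paper: the chain rule on $\{R>0\}$, Calabi's identities from Lemma \ref{alp}, the trivial case $\alpha\geq 1$, and the constant-zero lower barrier at points where $R=0$. The only difference is for $\alpha<1$, where you bound $g(dR,dR)\leq 4R\,A_{ijk,\ell}A^{ijk,\ell}$ by Cauchy--Schwarz, while the paper writes the remainder as $B_{abcijk\ell}B^{abcijk\ell}$ with $B=CA_{abc,i}A_{jk\ell}+DA_{abc}A_{jk\ell,i}$; at $\alpha=\tfrac12$ that identity is just the Lagrange-identity proof of your inequality, so both arguments give the same threshold $\alpha\geq\tfrac12$.
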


\begin{proof}
As in the proof of Theorem \ref{alpha>0}, it is enough to prove this in the classical sense on the set where $R > 0$. It is straightforward to compute that
$$\Delta R^\alpha = \alpha(\alpha-1)R^{\alpha-2}g(d R,d R)+\alpha R^{\alpha-1}\Delta R.$$
Thus, by using Lemma \ref{alp},
$$\Delta R^\alpha\geq 2\alpha\frac{n+1}{n(n-1)}R^{\alpha+1}+\alpha R^{\alpha-2}(2R  A_{ijk,\ell}A^{ijk,\ell}+(\alpha-1)g(d R,d R)).$$
There are two cases. If $\alpha\geq1$, the statement follows because obviously the remaining term
\begin{equation}\label{B}
2R  A_{ijk,\ell}A^{ijk,\ell}+(\alpha-1)g(d R,d R),
\end{equation}
is non-negative. If $\alpha<1$, we would like to show that the only admissible values of $\alpha$ for which Calabi's method applies are those $\geq 1/2$ (Calabi \cite{EC} only wrote down the case $\alpha = 1/2$). To this end, we would like to write the remaining term \eqref{B} as a squared norm. We introduce the tensor
$$B_{abcijk\ell}:=CA_{abc,i}A_{jk\ell}+DA_{abc}A_{jk\ell,i},$$
where $C,D\in\R$ are constants to be determined. By Lemma \ref{alp}, we have that
\begin{align*}\begin{split}
     B_{abcijk\ell}B^{abcijk\ell} & = \displaystyle (C^2+D^2)RA_{ijk,\ell}A^{ijk,\ell}+2CDA^{abc,i}A_{abc}A_{jk\ell,i}A^{jk\ell} \\
     & = \displaystyle  (C^2+D^2)RA_{ijk,\ell}A^{ijk,\ell}+\frac{CD}{2}g(d R,d R).
\end{split}\end{align*}
Thus, in order to obtain
$$
    B_{abcijk\ell}B^{abcijk\ell}=2R  A_{ijk,\ell}A^{ijk,\ell}+(\alpha-1)g(d R,d R),
$$
we need that
$$C^2+D^2=2,\quad  \frac{CD}{2}=\alpha-1.$$ 
Equivalently,
$$C+D=\pm\sqrt{4\alpha-2},\quad  C-D=\pm\sqrt{6-4\alpha}.$$
Such real constants exist if and only if $\alpha\in[1/2,3/2]$.
\end{proof}

\begin{remark}\label{Sharp1}
Using Schur's lemma, Calabi \cite[p.~114]{EC} proved that for $n \geq 3$, equality in \eqref{LE} cannot hold on a nonempty open set where $R > 0$. Here is an alternative argument for all $n \geq 2$ without using Schur's lemma: Let $\alpha=1/2$ for simplicity. If the claim is false, then after shrinking the open set there exist fixed indices $a,b,c$ such that $A_{abc}$ never vanishes. (This follows from $A \not \equiv 0$, which is true because otherwise $dR \equiv 0$ by Lemma \ref{alp}, so $\Delta R \equiv 0$, so $R \equiv 0$.) Moreover, from $B \equiv 0$ and $\alpha = 1/2$,
$$A_{ijk,\ell}=\frac{A_{abc,\ell}}{A_{abc}}A_{ijk}.$$
Setting $\lambda_{\ell}:=A_{abc,\ell}/A_{abc}$, we obtain $A_{ijk,\ell}=\lambda_{\ell}A_{ijk}$. Combining this with the properties of the tensor $A_{ijk}$ given in Lemma \ref{alp} and with the full symmetry of $A_{ijk}$, we further obtain for all $s,t,k$ that
$$0
 = (\lambda_{s}\lambda_{t})(g^{ij}A_{ijk})
 = g^{ij} (\lambda_{s}\lambda_{t} A_{ijk})
 = (g^{ij} \lambda_{i}\lambda_{j}) A_{stk}.
$$
Since $A_{abc}$ never vanishes, we get $\lambda_\ell \equiv 0$ for all $\ell$, so $A_{ijk,\ell} \equiv 0$ for all $i,j,k,\ell$, so $R \equiv 0$ as above.
\end{remark}

\begin{remark}
    In \cite[p.~114]{EC}, Calabi also claimed that equality in \eqref{LE} cannot hold even at a single point where $R > 0$, but we have not been able to reproduce such an argument. In fact, Example \ref{n=2} is an explicit counterexample to this stronger claim when $n=2$. The case $n \geq 3$ remains open.
\end{remark}

Calabi used Proposition \ref{prop:DiffIneqR} with $\alpha = \beta/2$ and a Riccati comparison argument to prove

\begin{theorem}[Calabi {\cite[Eq. 1.19 and bottom of p. 120]{EC}} for $\beta=1$, {\cite[Thm 2.2.13]{JSAR}} for $\beta>1$]\label{T11}
    Let $g$ be the Hessian metric associated with a smooth convex solution $u$ of the real Monge-Ampère equation
\eqref{MA} on a domain $U \subset \mathbb{R}^n$. Let $R \geq 0$ denote the scalar curvature of $g$. Then, for all $x \in U$,
$$\sqrt{R(x)}\leq \frac{1}{{\rm dist}_g(x,\partial U)}\inf_{\beta \geq 1}  s_{\max}(n,\beta)\sqrt{\frac{n(n-1)}{\beta(n+1)}},$$
where, for all $\beta > 0$, $s_{\max}(n,\beta)$ is the maximal existence time of the initial value problem
 \begin{equation}\label{ODE}
y''(s)+\frac{n-1}{s}y'(s)=y(s)^{\frac{\beta+2}{\beta}}, \quad y(0)=1,\quad  y'(0)=0.
\end{equation}
\end{theorem}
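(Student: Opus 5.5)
Fix $x\in U$ with $R(x)>0$ (otherwise there is nothing to prove) and a $\beta\geq 1$. The plan is a Riccati/ODE comparison along radial geodesics, using Proposition \ref{prop:DiffIneqR} with $\alpha=\beta/2\geq 1/2$. For $v:=R^{\beta/2}$ this gives, in the barrier sense,
$$\Delta v\geq \beta\frac{n+1}{n(n-1)}\,v^{\frac{\beta+2}{\beta}}.$$
We rescale to normalize. Set $\lambda:=\sqrt{\beta(n+1)/(n(n-1))}$, and choose a constant $a>0$ so that $w:=a\,v$ is $1$ at $x$, namely $a=R(x)^{-\beta/2}$. We also rescale the metric to $\tilde g:=c^2 g$ with $c:=\lambda\sqrt{R(x)}$. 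Since $\Delta_{\tilde g}=c^{-2}\Delta_g$, we get $\Delta_{\tilde g}w\geq w^{(\beta+2)/\beta}$ and $w(x)=1$. Distances scale by $c$, so it suffices to show
$$\rho:={\rm dist}_{\tilde g}(x,\partial U)\leq s_{\max}(n,\beta).$$

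Suppose instead that $\rho>s_{\max}$. Let $y$ be the radial solution of \eqref{ODE}, which is increasing and convex and blows up at $s_{\max}$. Set $Y(p):=y(r(p))$ with $r={\rm dist}_{\tilde g}(x,\cdot)$, on the geodesic ball $B:=B^{\tilde g}_{s_{\max}}(x)$. This ball is relatively compact in $U$ because $\rho>s_{\max}$, and $Y\to\infty$ at $\partial B$.

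Next we need a Laplacian comparison. The metric $g$ has ${\rm Ric}\geq 0$ by Lemma \ref{SCT}, and so does $\tilde g$. Since $y'\geq 0$, the comparison $\Delta r\leq (n-1)/r$ gives
$$\Delta_{\tilde g}Y=y''+y'\Delta r\leq y''+\tfrac{n-1}{r}y'=Y^{(\beta+2)/\beta}$$
in the barrier (upper) sense, including across the cut locus by Calabi's trick.

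Then comes the maximum principle comparison. On $B$ the function $Y-w$ tends to $+\infty$ at $\partial B$, so it attains an interior minimum at some point $q$. If $Y(q)<w(q)$, then at $q$
$$\Delta(Y-w)\leq Y^{(\beta+2)/\beta}-w^{(\beta+2)/\beta}<0,$$
contradicting minimality, once this is interpreted with barriers. Hence $w\leq Y$ on $B$, which is not yet a contradiction. To obtain one, we perturb: replace $y$ by the solution $y_\epsilon$ with $y_\epsilon(0)=1-\epsilon$, whose existence time $s_\epsilon$ is slightly larger than $s_{\max}$ but still less than $\rho$ for small $\epsilon$. The same argument gives $w\leq Y_\epsilon$, and evaluating at $x$ yields $1\leq 1-\epsilon$, a contradiction. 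Thus $\rho\leq s_{\max}(n,\beta)$. Unwinding the scaling gives
$$\lambda\sqrt{R(x)}\,{\rm dist}_g(x,\partial U)\leq s_{\max}(n,\beta),$$
which is the claim. Taking the infimum over $\beta\geq 1$ finishes the proof.

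The main obstacle is the rigorous barrier-sense maximum principle at points where $w$ or $Y$ is only a barrier sub/supersolution. These are the points where $R=0$, where $R^{\beta/2}$ is not smooth; for $\beta\geq 1$ it is at least Lipschitz. There is also the cut locus of $x$, where $r$ is not smooth. For the cut locus I would use Calabi's device of replacing $r$ near $q$ by ${\rm dist}(x',\cdot)+{\rm dist}(x,x')$ for $x'$ slightly along the minimal geodesic, which is an upper barrier. At points with $w(q)=0$ the inequality $Y(q)<w(q)$ cannot occur, since $Y\geq 1-\epsilon>0$, so only points with $R>0$ matter, and there $w$ is smooth. The scaling in $\epsilon$ (continuity of the blow-up time in the initial data) is routine.
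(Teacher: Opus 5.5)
Your proposal is correct and is essentially the argument the paper attributes to Calabi (and to the thesis for $\beta>1$): apply Proposition \ref{prop:DiffIneqR} with $\alpha=\beta/2$, normalize by scaling, and use the maximum principle to compare $R^{\beta/2}$ with the radial blow-up solution of \eqref{ODE}, via the Riccati-based Laplacian comparison $\Delta r\leq (n-1)/r$ for ${\rm Ric}\geq 0$ and Calabi's barrier trick at the cut locus. Your perturbation to $y(0)=1-\epsilon$, whose blow-up time $s_{\max}(1-\epsilon)^{-1/\beta}$ follows from the scaling symmetry of \eqref{ODE}, and your observation that the minimum point must have $R>0$ correctly handle the remaining technicalities.
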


Furthermore, by using barrier functions, Calabi \cite[Eq. 3.13]{EC} was able to prove that
\begin{equation}\label{eq:smax:est}
s_{\max}(n,1) \in \begin{cases}
    [\sqrt{n},\sqrt{2n}] &   (n\geq 4),  \\
    [\sqrt{2n}, 1.3111 \cdot \sqrt{2n}]&  (n=2,3).\end{cases}
    \end{equation}
For $n=2$ we have the following estimate of $s_{\max}(2,\beta)$ for all $\beta > 0$, which is sharp in the limit $\beta \to 0$.
This was found using ChatGPT based on a weaker estimate that we had proved by hand.

\begin{lemma}\label{2beta}
For all $\beta > 0$ we have that 
$$s_{\max}(2,\beta) \in [2\sqrt{\beta}, 2\sqrt{\beta(\beta+1)}].$$
\end{lemma}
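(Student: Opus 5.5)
The plan is to compare the solution $y$ of \eqref{ODE} with $n=2$ against explicit sub- and supersolutions of the form
$$z_a(s) := \Bigl(1-\frac{s^2}{a}\Bigr)^{-\beta}, \qquad a>0.$$
Each $z_a$ satisfies $z_a(0)=1$, $z_a'(0)=0$ and blows up exactly at $s=\sqrt a$. The exponent $-\beta$ is chosen because $p:=\frac{\beta+2}{\beta}$ gives $z_a^p = w^{-\beta-2}$, where $w:=1-s^2/a$. Differentiating,
$$z_a''+\frac{1}{s}z_a' = w^{-\beta-2}\Bigl[\frac{4\beta}{a}\,w+\frac{4\beta(\beta+1)s^2}{a^2}\Bigr] = z_a^{p}\Bigl[\frac{4\beta}{a}+\frac{4\beta^2 s^2}{a^2}\Bigr].$$

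The two choices of $a$ give the two bounds.
\begin{itemize}
\item For $a=4\beta$ the bracket equals $1+\frac{s^2}{4\beta}\geq 1$. Hence $z_{4\beta}$ is a supersolution on $[0,2\sqrt\beta)$.
\item For $a=4\beta(\beta+1)$ the bracket equals $\frac{1}{\beta+1}+\frac{s^2}{4(\beta+1)^2}$. This is $\leq 1$ exactly when $s^2\leq 4\beta(\beta+1)$, i.e. on the whole existence interval of $z_a$. Hence $z_{4\beta(\beta+1)}$ is a subsolution there.
\end{itemize}
If comparison holds, namely $y\leq z_{4\beta}$ and $y\geq z_{4\beta(\beta+1)}$ on their common intervals, then two conclusions follow. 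First, $y$ stays finite up to $2\sqrt\beta$. Since $y$ is increasing ($(sy')'=sy^p>0$), it can only cease to exist by blowing up, so $s_{\max}\geq 2\sqrt\beta$. Second, $y$ must blow up no later than $z_{4\beta(\beta+1)}$, so $s_{\max}\leq 2\sqrt{\beta(\beta+1)}$.

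The main obstacle is the comparison principle itself, because the initial data coincide and the inequalities are not strict at $s=0$. For instance, the supersolution bracket equals $1$ at $s=0$. I would rewrite the problem as the fixed-point equation
$$y(s)=1+\int_0^s\frac1t\int_0^t r\,y(r)^p\,dr\,dt =: \mathcal{T}[y](s).$$
The operator $\mathcal T$ is monotone. A supersolution satisfies $z\geq\mathcal T[z]$ and a subsolution satisfies $z\leq \mathcal T[z]$.

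To handle the degeneracy at $0$, I would perturb the initial value: solve \eqref{ODE} with $y_\varepsilon(0)=1\mp\varepsilon$. For the perturbed problem, a first-touching-time argument applied to the integral equation gives strict comparison. Then I would let $\varepsilon\to0$, using continuous dependence on initial data on compact subintervals. An alternative is Picard iteration starting from $z$: the iterates $\mathcal T^k[z]$ are monotone and converge to $y$ on compact subintervals, which yields the comparison directly.

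Finally, the upper bound needs a little extra care. Continuous dependence only controls $y$ on compact subintervals, so I would argue as follows. The lower bound $y\geq z_{4\beta(\beta+1)}$ holds on every $[0,s_0]$ with $s_0<\min(s_{\max},2\sqrt{\beta(\beta+1)})$. If $s_{\max}$ exceeded $2\sqrt{\beta(\beta+1)}$, then $y$ would be bounded on $[0,2\sqrt{\beta(\beta+1)}]$, contradicting the blow-up of $z_{4\beta(\beta+1)}$ at that point.
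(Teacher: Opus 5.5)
Your proposal is correct and is essentially the paper's argument. Your barriers $z_{4\beta}$ and $z_{4\beta(\beta+1)}$ are exactly the paper's $U_1$ and $U_2$ after the rescaling $s=\sqrt{\beta}\,r$, and your integral-equation comparison (perturbed initial value plus a first-touching time) supplies the comparison step that the paper only asserts.

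One small slip: for $a=4\beta$ the bracket is $1+s^2/4$, not $1+s^2/(4\beta)$, which is harmless because it is still $\geq 1$. Conversely, your subsolution bracket $\frac{1}{\beta+1}+\frac{s^2}{4(\beta+1)^2}$ is more accurate than the paper's displayed identity for $U_2$, which drops the $r^2$-term; the conclusion is unaffected precisely because, as you check, the bracket stays $\leq 1$ up to the blow-up point.
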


\begin{proof}
     Consider the initial value problem \eqref{ODE}, and make the change of variables
$$s=\sqrt{\beta}r,\quad  y(s)=Y(r).$$
Then $Y(r)$ solves the initial value problem
\begin{equation}\label{smax}
Y''(r)+\frac{1}{r}Y'(r)=\beta Y(r)^{\frac{\beta+2}{\beta}},\quad 
Y(0)=1,\quad  Y'(0)=0.
\end{equation}
Consider the functions
$$U_{1}(r)=\left(1-\frac{r^2}{4}\right)^{-\beta},\quad  U_{2}(r)=\left(1-\frac{r^2}{4(1+\beta)}\right)^{-\beta}.$$
The function $U_{1}(r)$ blows up at $r_{1}=2$ and the function $U_{2}(r)$ blows up at $r_{2}=2\sqrt{\beta+1}$. By explicit computations, we have that
$$U_{1}''(r)+\frac{1}{r}U_{1}'(r) = \left(1+\beta \frac{r^2}{4}\right) \cdot \beta U_{1}(r)^{\frac{\beta+2}{\beta}},\quad  U_{2}''(r)+\frac{1}{r}U_{2}'(r)= \frac{1}{\beta+1}\cdot \beta U_{2}(r)^{\frac{\beta+2}{\beta}}.$$
Thus, $U_{1}$ is a supersolution and $U_{2}$ is a subsolution of \eqref{smax}. Thus, if $r_{\max}$ denotes the blow-up time of $Y(r)$, then $r_{\max} \in [2,2\sqrt{\beta+1}]$. This yields the statement of the lemma.
\end{proof}

\begin{corollary}\label{cor:cal:improved:c}
Let $R$ be the scalar curvature of the Hessian metric $g$ associated with a smooth convex solution $u$ of the real Monge-Ampère equation
\eqref{MA} on a domain $U \subset \mathbb{R}^2$. Then, for all $x \in U$,
$$\sqrt{R(x)}\leq \frac{2\sqrt{2/3}}{{\rm dist}_g(x,\partial U)}.$$
\end{corollary}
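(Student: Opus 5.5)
The plan is to apply Theorem \ref{T11} with $n=2$ and minimize over $\beta\geq 1$, using the upper bound of Lemma \ref{2beta} for $s_{\max}(2,\beta)$.

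For $n=2$ the dimensional factor is $\sqrt{n(n-1)/(\beta(n+1))}=\sqrt{2/(3\beta)}$. Lemma \ref{2beta} gives $s_{\max}(2,\beta)\leq 2\sqrt{\beta(\beta+1)}$. Multiplying, the $\sqrt{\beta}$ cancels and
$$s_{\max}(2,\beta)\sqrt{\frac{2}{3\beta}}\leq 2\sqrt{\frac{2(\beta+1)}{3}}.$$
The right-hand side increases in $\beta$, so among admissible $\beta\geq 1$ it is smallest at $\beta=1$, where it equals $2\sqrt{4/3}=4/\sqrt3$.

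This does not match the claimed constant $2\sqrt{2/3}$, which is exactly the limit of the bound as $\beta\to 0$. So the corollary must use exponents $\alpha=\beta/2<1/2$, which Theorem \ref{T11} as stated (via Proposition \ref{prop:DiffIneqR}) does not cover. For $n=2$, however, Theorem \ref{alpha>0} gives the stronger inequality $\Delta R^{\alpha}\geq 3\alpha R^{\alpha+1}$ for \emph{all} $\alpha>0$, in the barrier sense. Its constant $3\alpha$ equals $2\alpha\frac{n+1}{n(n-1)}$ at $n=2$, which is precisely the constant in Proposition \ref{prop:DiffIneqR}.

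The plan is therefore:
\begin{enumerate}
\item Observe that Calabi's Riccati comparison argument behind Theorem \ref{T11} uses only the differential inequality \eqref{LE} for $R^{\beta/2}$, so the conclusion of Theorem \ref{T11} holds for every $\beta>0$ when $n=2$, by Theorem \ref{alpha>0}.
\item Conclude that for all $\beta>0$,
$$\sqrt{R(x)}\,{\rm dist}_g(x,\partial U)\leq 2\sqrt{2(\beta+1)/3}.$$
\item Let $\beta\to 0$; since the left side does not depend on $\beta$, this yields $2\sqrt{2/3}$.
\end{enumerate}

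The main obstacle is step 1: checking that the comparison argument of Theorem \ref{T11} (\cite[Thm 2.2.13]{JSAR}) uses no hypothesis $\beta\geq 1$ beyond the validity of \eqref{LE} with $\alpha=\beta/2$.

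That argument compares $R^{\beta/2}$ along geodesic balls with the radial blow-up solution of \eqref{ODE}, using the barrier-sense inequality and $\mathrm{Ric}\geq 0$ (Lemma \ref{SCT}) for Laplacian comparison of the distance function. I expect neither ingredient to depend on the size of $\beta$. The rescaling of \eqref{ODE} is also valid for any $\beta>0$, which is exactly the range Lemma \ref{2beta} covers.
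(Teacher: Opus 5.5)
Your proposal is correct and is essentially the paper's proof: for $n=2$, Theorem \ref{alpha>0} replaces Proposition \ref{prop:DiffIneqR}, so the comparison argument behind Theorem \ref{T11} applies for every $\beta>0$, and Lemma \ref{2beta} then lets you send $\beta\to 0$ to get the constant $2\sqrt{2/3}$. Your use of the explicit upper bound $s_{\max}(2,\beta)\leq 2\sqrt{\beta(\beta+1)}$, which yields $2\sqrt{2(\beta+1)/3}$ for each $\beta>0$, simply makes the paper's limiting step (stated there as $s_{\max}(2,\beta)\sim 2\sqrt{\beta}$) slightly more explicit.
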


\begin{proof}
Recall the conclusion of Theorem \ref{T11} in dimension $n=2$:
$$\sqrt{R(x)}\leq \frac{\sqrt{2/3}}{{\rm dist}_g(x,\partial U)} \inf_{\beta \geq 1} \frac{s_{\max}(2,\beta)}{\sqrt{\beta}},$$
where $s_{\max}(2,\beta)$ is defined for all $\beta > 0$ and, by Lemma \ref{2beta}, satisfies $s_{\max}(2,\beta) \sim 2\sqrt{\beta}$ as $\beta \to 0$. The reason for taking the inf only over $\beta \geq 1$ is that Proposition \ref{prop:DiffIneqR} holds only for $\alpha = \beta/2 \geq 1/2$. As Theorem \ref{alpha>0} removes this restriction, the proof of Theorem \ref{T11} gives the desired estimate.
\end{proof}

\subsection{Improved curvature estimates near the endpoint of a minimal geodesic}

    To end this paper, we now return to Calabi's best result towards the general version of Theorem \ref{thm:main}: 
    $$\sqrt{R(x)} \leq \frac{M_n}{{\rm dist}_{g|_x}(x,\partial U)}, \;\,M_n < \infty \;(n = 2,3,4,5),$$
    for all $x \in U$, where $u: U \subset \mathbb{R}^n \to \mathbb{R}$ is any smooth convex solution to $\det\hspace{0.25mm}{\rm Hess}(u)=1$ on an open set, $g$ is the associated Hessian metric and $R$ is its scalar curvature. This is deduced from Theorem \ref{T11}, but in fact the more difficult step, which only works for $n \leq 5$, is to bound ${\rm dist}_g(x,\partial U)$ below in terms of ${\rm dist}_{g|_x}(x,\partial U)$. In particular, if $U = \mathbb{R}^n$, one needs to prove that $g$ is complete.
    
    Both steps can be encoded as an estimate of the form
    \begin{align}\label{eq:crucial:curv:est}\sqrt{{\rm Ric}(C'(s),C'(s))} \leq \frac{c}{\gamma - s},\end{align}
    where $C(s)$, $s \in [0,\gamma)$, is a minimal unit-speed geodesic in the Hessian metric $g$ realizing the shortest distance $\gamma$ from $x$ to $\partial U$ in this metric. Theorem \ref{T11} and \eqref{eq:smax:est} imply \eqref{eq:crucial:curv:est} with
    \begin{equation}\label{eq:c:best} c = 1.52 \;\,(n = 2), \quad c = 3.94 \;\, (n = 3), \quad c = n\sqrt{2\frac{n-1}{n+1}} \;\,(n \geq 4).\end{equation}
 Calabi pointed out that the \emph{second} step, i.e. the lower bound of $\gamma$ in terms of the affine distance of $x$ to $\partial U$, can be vastly simplified and made to work for any $n$ if \eqref{eq:c:best} can be improved to $c < (\frac{n}{n-1})^{1/2}$; see \cite[p. 122]{EC}. Such an improvement seems to not have been known for any $n$. Corollary \ref{cor:cal:improved:c} implies this for $n = 2$ with $c = 2/\sqrt{3} \leq 1.16$. This is the statement we have recorded in Theorem \ref{thm:improved:calabi}.

As it stands, Calabi's lower bound of $\gamma$ in terms of ${\rm dist}_{g|_x}(x,\partial U)$ requires an improvement of \eqref{eq:c:best} asymptotically as $s \to \gamma$. In his proof this is provided by a very difficult lemma due to Levinson, which morally says that $c = (\frac{n-1}{4})^{1/2}$ is possible near the endpoint of any minimal geodesic in any manifold with ${\rm Ric}\geq 0$. Precisely, Levinson's lemma, which we have now eliminated for $n = 2$, is

\begin{lemma}[Levinson, in {\cite[p. 123]{EC}}] \label{LL}
Let $H$ be a continuously differentiable function of $s$ $(0<s<\gamma<\infty)$ such that $H'\geq H^2$. Then, for all $0 < \varepsilon < \gamma$ and for all $c'<2$,
$$\int_{\varepsilon}^{\gamma}\exp\left(c'\int_{\varepsilon}^{s}\sqrt{H'(s_{1})-H(s_{1})^{2}}\,ds_{1}\right)ds<\infty.$$
\end{lemma}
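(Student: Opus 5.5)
Throughout, set $q:=H'-H^2\geq 0$ on $(0,\gamma)$ and fix $0<\varepsilon<\gamma$ and $c'<2$. The idea is to bound the inner integral $\int_\varepsilon^s\sqrt{q}$ by $\frac{c''}{c'}\log\frac{\gamma-\varepsilon}{\gamma-s}$ plus a constant, for some $c''<2$ (in fact $c''$ will be $c'/2\cdot 2$ in the bookkeeping below). Then the integrand is $O((\gamma-s)^{-c'/2})$, which is integrable at $s=\gamma$ exactly because $c'<2$. Negative $c'$ is trivial since the integrand is then at most $1$, so assume $c'>0$.

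\textbf{Step 1 (Riccati comparison).} I will show $H(s)\leq \frac{1}{\gamma-s}$ for all $s\in(0,\gamma)$. Suppose $H(s_0)>\frac{1}{\gamma-s_0}>0$. Then $-1/H$ is defined on a maximal interval to the right of $s_0$, and $(-1/H)'=H'/H^2\geq 1$. Hence $1/H(s)\leq 1/H(s_0)-(s-s_0)$, so $H$ blows up before $s_0+1/H(s_0)<\gamma$. This contradicts $H$ being $C^1$ on $(0,\gamma)$. In particular $(\gamma-s)H(s)\leq 1$ everywhere, with no sign assumption needed since the bound is trivial when $H\leq 0$.

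\textbf{Step 2 (weighted AM--GM).} For a parameter $t>0$ I use the pointwise bound
\[
\sqrt{q(r)}\leq \frac12\Bigl(\frac{(\gamma-r)\,q(r)}{t}+\frac{t}{\gamma-r}\Bigr).
\]
The second term integrates to $\frac t2 L(s)$, where $L(s):=\log\frac{\gamma-\varepsilon}{\gamma-s}$. For the first term I integrate by parts:
\[
\int_\varepsilon^s(\gamma-r)(H'-H^2)\,dr=(\gamma-s)H(s)-(\gamma-\varepsilon)H(\varepsilon)+\int_\varepsilon^s\bigl(H-(\gamma-r)H^2\bigr)\,dr.
\]
The boundary terms are bounded above by $1+|(\gamma-\varepsilon)H(\varepsilon)|=:C_\varepsilon$, using Step 1. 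Maximizing the quadratic $H\mapsto H-(\gamma-r)H^2$ gives at most $\frac{1}{4(\gamma-r)}$, so the last integral is at most $\frac14 L(s)$.

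\textbf{Step 3 (optimize $t$).} Combining the two pieces,
\[
c'\int_\varepsilon^s\sqrt q\leq \frac{c'C_\varepsilon}{2t}+\frac{c'}{2}\Bigl(\frac1{4t}+t\Bigr)L(s).
\]
The choice $t=\tfrac12$ minimizes the coefficient of $L(s)$, giving $c'\int_\varepsilon^s\sqrt q\leq c'C_\varepsilon+\frac{c'}{2}L(s)$. Therefore the integrand is bounded by $e^{c'C_\varepsilon}\bigl(\frac{\gamma-\varepsilon}{\gamma-s}\bigr)^{c'/2}$, and $\int_\varepsilon^\gamma(\gamma-s)^{-c'/2}\,ds<\infty$ since $c'/2<1$.

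The main obstacle is Step 2: naive bounds on $\sqrt q$ lose too much. The two key points are to weight by $(\gamma-r)$, which makes the integration by parts close up using only the Riccati bound of Step 1 and the elementary quadratic maximum, and then to balance the two AM--GM terms. The sharp threshold $2$ comes precisely from $\min_t\bigl(\frac1{4t}+t\bigr)=1$.
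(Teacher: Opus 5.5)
Your argument is correct, and it differs from the paper, which gives no proof of this lemma at all. The paper quotes it from Calabi \cite{EC}, where it is attributed to Levinson, and calls it very difficult.

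Your route is short and self-contained:
\begin{itemize}
\item the Riccati comparison gives $(\gamma-s)H(s)\leq 1$;
\item weighting by $\gamma-r$ in the AM--GM step makes the integration by parts close up, with the boundary term at $s$ controlled by Step 1 and the bulk term by $H-(\gamma-r)H^2\leq\frac{1}{4(\gamma-r)}$;
\item the choice $t=\frac12$ balances the two pieces.
\end{itemize}
Each of these steps checks out.

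What this buys is a pointwise, not merely integrated, estimate:
\[
\int_\varepsilon^s\sqrt{H'-H^2}\,dr\leq 1+|(\gamma-\varepsilon)H(\varepsilon)|+\frac{1}{2}\log\frac{\gamma-\varepsilon}{\gamma-s}.
\]
Its coefficient $\frac12$ is sharp: for $H=\frac{1}{2(\gamma-s)}$, both your AM--GM step at $t=\frac12$ and the quadratic maximization are equalities. This bound gives Proposition~\ref{2.3.10} immediately, without going through the exponential integral.

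It is also slightly stronger than the lemma itself. Set $F(s)=\int_\varepsilon^s\sqrt{H'-H^2}$, which is nondecreasing. The lemma combined with $(\gamma-s)e^{c'F(s)}\leq\int_s^\gamma e^{c'F}$ only yields the coefficient $\frac{1}{c'}>\frac12$ in place of your $\frac12$.

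The one blemish is the bookkeeping in your opening paragraph, where you write $c''<2$ and say $c''$ will be $c'/2\cdot 2$. That is garbled. What integrability actually requires, and what Step 3 delivers, is the exponent $c'/2<1$.
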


To clarify the use of this lemma for Calabi's problem, let us prove a variant (cf. Proposition \ref{prop:levin:mod}):

\begin{proposition}\label{2.3.10}
For any Riemannian manifold $M^n$ and minimal unit-speed geodesic $C: [0,\gamma) \to M$,
$$\liminf_{s \to \gamma} \;(\gamma-s)^2{\rm Ric}(C'(s),C'(s)) \leq \frac{n-1}{4}.$$
Equality can hold on a surface of positive curvature.
\end{proposition}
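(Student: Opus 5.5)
We argue by contradiction, using the second variation of arc length along the minimal geodesic $C$. Suppose
$$\liminf_{s\to\gamma}(\gamma-s)^2{\rm Ric}(C'(s),C'(s)) > \frac{n-1}{4}.$$
Then there are $\kappa > \frac{n-1}{4}$ and $s_0 \in [0,\gamma)$ such that ${\rm Ric}(C'(s),C'(s)) \geq \kappa(\gamma-s)^{-2}$ for all $s \in [s_0,\gamma)$.

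Let $E_1,\ldots,E_{n-1}$ be parallel orthonormal fields along $C$ orthogonal to $C'$. For a Lipschitz function $\varphi$ on $[s_0,s_1]$ with $\varphi(s_0)=\varphi(s_1)=0$, and $s_1<\gamma$, minimality of $C$ on $[s_0,s_1]$ makes every index form nonnegative. Summing over $i$, we get
$$0 \leq \sum_{i=1}^{n-1} I(\varphi E_i,\varphi E_i) = \int_{s_0}^{s_1}\Bigl((n-1)\varphi'(s)^2 - \varphi(s)^2\,{\rm Ric}(C'(s),C'(s))\Bigr)\,ds.$$
Therefore
$$\int (n-1)\varphi'^2 \geq \kappa\int \frac{\varphi^2}{(\gamma-s)^2}.$$
Writing $t = \gamma - s$, this says that the Hardy-type inequality
$$\int_{t_1}^{t_0}\varphi'^2\,dt \geq \frac{\kappa}{n-1}\int_{t_1}^{t_0}\frac{\varphi^2}{t^2}\,dt$$
holds for all $\varphi$ vanishing at $t_0 = \gamma-s_0$ and at some $t_1>0$ that we may choose freely. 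Since $\frac{\kappa}{n-1} > \frac14$ and the sharp Hardy constant is $\frac14$, this should be impossible.

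To get the contradiction, we take the near-extremal function $\varphi(t) = t^{1/2}\sin(\mu\log(t_0/t))$ on $[t_0e^{-\pi/\mu}, t_0]$, with $\mu > 0$ small. It vanishes at both endpoints. After substituting $\tau = \log(t_0/t)$, a direct computation gives
$$\int\varphi'^2\,dt = \int_0^{\pi/\mu}\Bigl(\tfrac14 + \mu^2\Bigr)\sin^2(\mu\tau)\,d\tau + (\text{cross term}),$$
$$\int\frac{\varphi^2}{t^2}\,dt = \int_0^{\pi/\mu}\sin^2(\mu\tau)\,d\tau.$$
The cross term equals $\int \frac{\mu}{2}\sin(2\mu\tau)\,d\tau$ (up to sign), which vanishes over a full half-period. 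So the ratio of the two integrals is $\frac14 + \mu^2$, and choosing $\mu^2 < \frac{\kappa}{n-1} - \frac14$ contradicts the inequality. Equivalently, this is Sturm comparison: the Euler equation $y'' + \frac{\kappa}{n-1}t^{-2}y = 0$ oscillates when $\frac{\kappa}{n-1} > \frac14$, and any oscillation produces a conjugate point before $\gamma$. The main bookkeeping is checking that the support interval lies inside $(0,t_0]$, so that $s_1 < \gamma$. This is automatic because $t_0 e^{-\pi/\mu} > 0$.

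For the equality statement, we use the rotationally symmetric metric $g = dr^2 + f(r)^2\,d\theta^2$ on a surface, with the geodesic running radially toward $r = \gamma$, and we want $K(r) = -f''/f = \frac14(\gamma - r)^{-2}$. We take $f(r) = (\gamma - r)^{1/2}$, or more precisely a profile that is smoothed near $r = 0$ so that it closes up as a smooth cap: set $f$ to be $\sin$-like near $0$ and glue it to $(\gamma-r)^{1/2}$ by a concave interpolation. Concavity of $f$ makes $K \geq 0$, and it is positive with suitable choices. Near $r = \gamma$, $K = \frac14(\gamma-r)^{-2}$ exactly, so $(\gamma-s)^2{\rm Ric} = \frac14 = \frac{n-1}{4}$ for $n=2$. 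Meridians are minimal for their length because $f$ has no interior maximum issue in the relevant region; alternatively, the manifold is incomplete at $r=\gamma$ and the meridian realizes the distance to that boundary. I expect checking minimality of the meridian, i.e.\ the absence of conjugate points, to be the main obstacle; it follows because Jacobi fields along the meridian are multiples of $f$, which stays positive on $(0,\gamma)$.
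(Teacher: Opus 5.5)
Your proof is correct, and for the inequality it takes a genuinely different route from the paper. The paper works with the mean curvature $H$ of the geodesic spheres about $C(0)$ and the Riccati inequality $H'-H^2\geq \frac{1}{n-1}{\rm Ric}(C',C')$. It then gets the contradiction from Levinson's lemma (Lemma \ref{LL}), since $\exp\bigl(c'\int_\varepsilon^s\sqrt{H'-H^2}\bigr)$ would grow like $(\gamma-s)^{-c'\sqrt{a/(n-1)}}$ with exponent $>1$.

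You instead average the index form over $n-1$ parallel normal fields, reducing to a one-dimensional Hardy inequality. You then violate it with the near-extremal test function $t^{1/2}\sin(\mu\log(t_0/t))$. This is the classical (Kneser) oscillation criterion for the Euler equation $y''+\frac{\kappa}{n-1}t^{-2}y=0$, and the averaged index form is exactly what makes the Sturm-type heuristic rigorous when only Ricci curvature is controlled. Your computation checks out. Strictly, the integrand of $\int\varphi'^2\,dt$ is $\frac14\sin^2(\mu\tau)-\mu\sin(\mu\tau)\cos(\mu\tau)+\mu^2\cos^2(\mu\tau)$, but $\sin^2$ and $\cos^2$ have the same integral over $[0,\pi/\mu]$, so the ratio is indeed $\frac14+\mu^2$.

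Your route is shorter and self-contained. It needs only the second variation formula on compact subsegments $[s_0,s_1]$, with no appeal to the smoothness of the distance function along $C$ or to the difficult Levinson lemma. The paper's route is less economical for this statement, but it is chosen to show how Levinson's lemma, which Calabi needs in an integrated form, relates to the pointwise bound.

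For the equality case you use the same idea as the paper: a rotationally symmetric surface with $K=\frac14(\gamma-r)^{-2}$ along the meridians. You use the Euler solution $(\gamma-r)^{1/2}$, which does not vanish at $r=0$ and so must be glued to a cap. That gluing is possible, but it needs a compatibility condition such as $(\gamma-r_1)^{1/2}<r_1$ at the gluing radius $r_1$, because concavity with $f(0)=0$, $f'(0)=1$ forces $f(r)\le r$. The paper avoids gluing by taking the other solution $\sqrt{1-s}\,\log\frac{1}{1-s}$ of the same Euler equation. It already satisfies $h(0)=0$ and $h'(0)=1$, so $K=\frac{1}{4(1-s)^2}$ holds on the whole surface, at the cost of finite regularity at the pole.

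One step needs repair: your stated reason for minimality of the meridian is not valid. Positivity of $f$ only rules out conjugate points, and the absence of conjugate points does not imply minimality in general. The correct argument is immediate. For $dr^2+f(r)^2d\theta^2$, any curve from the pole to the circle $\{r=r_1\}$ has length at least $r_1$, since $|\dot c|^2=\dot r^2+f(r)^2\dot\theta^2\geq \dot r^2$. Hence every radial segment from the pole is minimizing.
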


\begin{proof}
Suppose $C: [0,\gamma) \to M$ is a counterexample to the inequality. Thus, there exists a $\gamma_0 \in [0,\gamma)$ and an $a \in (\frac{n-1}{4},\infty)$ such that ${\rm Ric}(C'(s),C'(s)) \geq a(\gamma -s)^{-2}$ for all $s \in [\gamma_0,\gamma)$. For $s \in (0,\gamma)$ let $H(s)$ denote the mean curvature at $C(s)$ of the geodesic sphere of radius $s$ centered at $C(0)$, normalized so that $H(s) = -1/s$ in flat $\mathbb{R}^n$. By the Riccati inequality from Riemannian geometry,
\begin{align*}
H'(s)-H(s)^2\geq
\frac{1}{n-1}\mathrm{Ric}(C'(s),C'(s)) \geq \frac{a}{n-1}\frac{1}{(\gamma-s)^2}
\end{align*}
for all $s \in [\gamma_0,\gamma)$. Also, by Lemma \ref{LL}, for all $\varepsilon \in (\gamma_0, \gamma)$ and for all $c'<2$,
$$\int_{\varepsilon}^{\gamma}\exp\left(c'\int_{\varepsilon}^{s}\sqrt{H'(s_{1})-H(s_{1})^{2}}\,ds_{1}\right)ds<\infty.$$
By combining these two inequalities and evaluating the inner integral, we deduce that
$$\infty > \int_\varepsilon^\gamma \exp \left(c'\sqrt{\frac{a}{n-1}} \log \frac{\gamma-\varepsilon}{\gamma - s}\right) ds = C \int_\varepsilon^\gamma (\gamma - s)^{-c'\sqrt{\frac{a}{n-1}}} \,ds$$
for some constant $C>0$. As $a > (n-1)/4$, we can choose $c'<2$ such that $c'\sqrt{a/(n-1)}$ is still greater than $1$, making the integral divergent. This is a contradiction.

To discuss the equality case, consider the rotationally symmetric Riemannian metric 
\begin{align*}\begin{split}g(s,\theta):=ds^2+h(s)^2d\theta^2, \quad h(s):=-\sqrt{1-s}\log(1-s), \quad s \in [0,1), \quad \theta \in \mathbb{R}/2\pi\mathbb{Z}.
\end{split}\end{align*}
Since $h(0)=0$, $h'(0)=1$ and $h''(0)=0$, the metric $g$ extends as a $C^3$ Riemannian metric to the origin. Since $\lim_{s\to 1} h(s)=0$, all meridians converge to the same point in the metric completion as $s \to 1$. The Gaussian curvature $K(s)$ along each meridian is given by
$$K(s)=-\frac{h''(s)}{h(s)} = \frac{1}{4(1-s)^2}.$$
Since ${\rm Ric} = Kg$ in dimension $2$, the claim follows.\end{proof}

We believe that Levinson's lemma is far from sharp for Monge-Ampère metrics.

\begin{example}\label{ex:ric:ma}
We revisit the Bryant Example \ref{Exn=1}. According to Bryant \cite{Bryantexample}, a geodesic in the metric $\Phi(d\tilde{x}^2+d\tilde{y}^2)$ is either a vertical line or a parabola that never touches the boundary:

\begin{center}
\begin{tikzpicture}[scale=0.8]
\begin{scope}[xshift=2.0cm]
    
    \draw[->] (-2.2,0) -- (2.2,0) node[right] {$\tilde{x}$};
    \draw[->] (0,0) -- (0,3.0) node[above] {$\tilde{y}$};

    \draw (-2.1,0) -- (2.1,0);

    \foreach \a in {-1.5,-1.0,-0.5,0.5,1.0,1.5} {
        \draw[blue, thick] (\a,2.6) -- (\a,0);
    }

    \foreach \a in {-1.2} {
        \draw[red, thick, domain=-1.8:1.5, samples=70]
        plot ({\x},{0.35+0.35*(\x-\a)^2});
    }

    \foreach \a in {0} {
        \draw[red, thick, domain=-1.8:1.8, samples=70]
        plot ({\x},{0.35+0.35*(\x-\a)^2});
    }

    \foreach \a in {1.2} {
        \draw[red, thick, domain=-1.5:1.8, samples=70]
        plot ({\x},{0.35+0.35*(\x-\a)^2});
    }
\end{scope}

\begin{scope}[xshift=9.0cm]
    
    \draw[->] (-2.2,0) -- (2.2,0) node[right] {$x$};
    \draw[->] (0,0) -- (0,3.0) node[above] {$y$};

    \draw (-2.1,0) -- (2.1,0);

    \foreach \a in {-0.75,-0.5,-0.25,0.25,0.5,0.75} {
        \draw[blue, thick] (0,0) -- ({\a*2.6},2.6);
    }

    \foreach \a in {0} {
        \draw[red, thick, domain=-1.5:1.5, samples=10]
        plot ({\x*(0.35+0.35*(\x-\a)^2)},{0.35+0.35*(\x-\a)^2});
    }

     \foreach \a in {-1.2} {
        \draw[red, thick, domain=-2.2:1.1, samples=10]
        plot ({\x*(0.35+0.35*(\x-\a)^2)},{0.35+0.35*(\x-\a)^2});
    }

 \foreach \a in {1.2} {
        \draw[red, thick, domain=-1.1:2.2, samples=10]
        plot ({\x*(0.35+0.35*(\x-\a)^2)},{0.35+0.35*(\x-\a)^2});
    }

    \pgfmathsetmacro{\xtilde}{1.0}
    \pgfmathsetmacro{\ytilde}{2.2}
    \pgfmathsetmacro{\xpoint}{\xtilde*\ytilde}
\end{scope}
\end{tikzpicture}
\end{center}

\noindent Moreover, the Gaussian curvature is given by
$$K(\tilde{x},\tilde{y})=\frac{1}{2\tilde{y}^{3}}.$$
For our purposes, we only need the vertical unit-speed geodesic from the point $(0,1)$ to the boundary $\tilde{y}=0$, which is automatically minimal because of the translational symmetry in $\tilde{x}$:
    $$C(s)=(\tilde{x}(s),\tilde{y}(s)) = \biggl(0,\biggl(1-\frac{3}{2}s\biggr)^{2/3}\biggr),\quad  s\in (0,\gamma), \quad \gamma = \frac{2}{3}.$$
Therefore, the Gaussian curvature along $C(s)$ is given by
$$K(s)=\displaystyle \frac{2}{9}\frac{1}{\left(\gamma-s\right)^2}.$$
\end{example}

\begin{example}\label{ex:schwarz:2}
We now revisit the extremal Schwarz Example \ref{ex:extremal:schwarz}. From \eqref{alphasig} and \eqref{holschw},
$$\Phi(\sigma)=\frac{R_0^2-|\sigma|^2}{|R_0-\sigma|^2}, \quad d\alpha=\frac{R_0-\sigma}{2R_0}d\sigma.$$
Thus, the metric $g = \Phi|d\alpha|^2$ is written in terms of the $\sigma$-coordinate as
$$g=\frac{R_0^2-|\sigma|^2}{4R_0^2}|d\sigma|^2.$$
Since $g$ is still conformal, its scalar curvature $R$ is given by
$$R=-\Delta \log \left(\frac{R_0^2-|\sigma|^2}{4R_0^2}\right)=\frac{16R_0^4}{(R_0^2-|\sigma|^2)^3}.$$
Now write $\sigma=re^{i\theta}$ with $r\in[0,R_0)$, $\theta\in[0,2\pi)$. Thus, the metric $g$ is given by 
$$g=\frac{R_0^2-r^2}{4R_0^2}(dr^2+r^2d\theta^2).$$
Since the conformal factor is independent of $\theta$, the minimal unit-speed geodesics from $O$ to the boundary are the reparametrizations by arclength of the radial segments
$$C(t)=(t\cos(\theta_{0}),t\sin(\theta_{0})), \quad t\in[0,R_0).$$
The arclength $s(t)$ of $C|_{[0,t]}$ with respect to the metric $g$ is given by
$$\begin{array}{ccl}
   s(t)   =  \displaystyle \int_{0}^{t} \sqrt{g(C'(\rho),C'(\rho))} \ d\rho =  \frac{R_0}{4}\left(\arcsin\left(\displaystyle \frac{t}{R_0}\right)+\frac{t}{R_0}\sqrt{ \displaystyle 1-\frac{t^2}{R_0^2}}\right). 
\end{array}$$
As $t\to R_0$ we have that $s(t) \to \frac{\pi R_0}{8} = \gamma$. Thus,
$$\sqrt{R(O)}=\frac{4}{R_0}=\frac{\pi/2}{\gamma}.$$
This gives the lower bound $c \geq \pi/\sqrt{8} \geq 1.11$ for the optimal constant $c$ in \eqref{eq:crucial:curv:est} in dimension $n = 2$, very close to our upper bound $c \leq 1.16$. On the other hand, one can also check that as $s \to \gamma$, \eqref{eq:crucial:curv:est} holds with a smaller and smaller constant $c$ approaching $\sqrt{2}/3 \leq 0.48$. This is consistent not only with Proposition \ref{2.3.10} (since $0.48 < 0.5$) but also with the constant $c = \sqrt{2}/3$ of the Bryant Example \ref{ex:ric:ma}. Like in that example, $\Phi$ vanishes to first order at every smooth point of $\partial \widetilde{U}$.
\end{example}

\begin{example}\label{ex:2nd:order:harmonic}
    We now consider a positive harmonic function vanishing to second order at some point of $\partial\widetilde{U}$. Specifically, let $g =\Phi(d\tilde{x}^2+d\tilde{y}^2)$, $\Phi(\tilde{x},\tilde{y}):=\tilde{x}\tilde{y}$, on the first quadrant in the $(\tilde{x},\tilde{y})$-plane. Then, 
    by Lemma \ref{GC}, the scalar curvature of $g$ is given by
    $$
      R = \frac{g(d\Phi,d\Phi)}{\Phi^2} =\displaystyle \frac{\tilde{x}^2+\tilde{y}^2}{\tilde{x}^3 \tilde{y}^3}.$$

    \noindent Fix the point $(\tilde{x}_0,\tilde{y}_0) = (1,1)$. It is easy to check that the curve
    $$C(s)= (\tilde{x}(s),\tilde{y}(s)) = ((1-s\sqrt{2})^{1/2},(1-s\sqrt{2})^{1/2}), \quad s\in\left[0,L\right), \quad L=\frac{1}{\sqrt{2}},$$
    is parametrized by arclength with respect to $g$. It is also a $g$-geodesic because it parametrizes the fixed point set of the $g$-isometry $(\tilde{x},\tilde{y}) \mapsto (\tilde{y},\tilde{x})$. The Gaussian curvature along $C(s)$ satisfies
    $$K(s) =\frac{1}{2}R(C(s))=\frac{1}{\displaystyle (1-s\sqrt{2})^{2}}= \frac{1/2}{(L-s)^2}.$$
    At first sight, this may seem to contradict Proposition \ref{2.3.10}, but $C(s)$ is actually not minimal on the interval $s \in [(1-\varepsilon^2)/\sqrt{2}, 1/\sqrt{2})$ for any given $\varepsilon \in (0,1]$. Indeed, we claim that for any $\varepsilon \in (0,1]$ there is a $\delta \in (0,\varepsilon]$ such that the length $L_1$ of the segment 
    $(\delta,\delta ) \to (\varepsilon ,\varepsilon)$
    is bigger than the length $L_2$ of the broken path
    $(\delta ,\delta ) \to (\varepsilon ,\delta ) \to (\varepsilon ,\varepsilon ).$
    To see this, we compute $$L_1=\frac{1}{\sqrt{2}}(\varepsilon^2-\delta^2), \quad L_2 =\frac{2}{3}(\varepsilon^{1/2} + \delta^{1/2})(\varepsilon^{3/2}-\delta^{3/2}).$$ Thus, for any fixed $\varepsilon \in (0,1]$ we get $L_1 > L_2$ by sending $\delta \to 0$.
    
     One can check directly that there are infinitely many conjugate points along $C(s)$. Unfortunately we were unable to compute the distance function to the boundary. The above results support our intuition that the shortest geodesic to the boundary favors Bryant behavior like in Example \ref{ex:schwarz:2}.
\end{example}

\bibliographystyle{amsplain}
\bibliography{Bibly}

@article{EC,
  author = {E. Calabi},
  title = {\textit{Improper affine hyperspheres of convex type and a generalization of a theorem by K. Jörgens}},
  journal = {\textnormal{Michigan Math. J.}},
  volume = {5},
  pages = {105--126},
  year = {1958}
}

@article{Jor,
  author = {K. Jörgens},
  title = {\textit{Über die Lösungen der Differentialgleichung $rt - s^2 = 1$}},
  journal = {\textnormal{Math. Ann.}},
  volume = {127},
  pages = {130--140},
  year = {1954}
}

@article{Nit,AUTHOR = {Nitsche, J.C.C.},
     TITLE = {\textit{Elementary proof of {B}ernstein's theorem on minimal surfaces}},
   JOURNAL = {\textnormal{Ann. of Math.}},
    VOLUME = {66},
      YEAR = {1957},
     PAGES = {543--544},
}

@article{Hit1,
author={N.J. Hitchin},
title={\textit{The moduli space of special Lagrangian submanifolds}},
journal={\textnormal{Ann. Sc. Norm. Sup. Pisa}},
volume={25},
pages={503--515},
year={1997}}

@article{Hit2,
author={N.J. Hitchin},
title={\textit{The moduli space of complex Lagrangian submanifolds}},
journal={Asian J. Math.},
volume={3},
pages={77--92},
year={1999}
}

@phdthesis{JSAR,
  author = {J.R. {Santiago Arellano}},
  title = {\textit{Geometry of Monge-Ampère metrics}},
  school = {\textnormal{University of Münster}},
  year = {2026},
  note = {available at \url{https://doi.org/10.17879/98838797940}}
}

@book{Besse,
  author = {Besse, A.L.},
  title = {\textit{Einstein manifolds}},
  publisher = {\textnormal{Springer}},
  address = {\textnormal{Berlin}},
  year = {2007}
}

@article{Bie,
    AUTHOR = {Bielawski, R.},
     TITLE = {\textit{Complete hyper-{K}\"ahler {$4n$}-manifolds with a local
              tri-{H}amiltonian {$\mathbf{R}^n$}-action}},
   JOURNAL = {\textnormal{Math. Ann.}},
    VOLUME = {314},
      YEAR = {1999},
     PAGES = {505--528},
}

@article{PP,
  author = {H. Pedersen and Y. Poon},
  title = {\textit{Hyper-{K}ähler metrics and a generalization of the Bogomolny equations}},
  journal = {\textnormal{Comm. Math. Phys.}},
  volume = {117},
  pages = {569--580},
  year = {1988}
}

@article{HKLR,
  author = {N.J. Hitchin and A. Karlhede and U. Lindstr{\"o}m and M. Roček},
  title = {\textit{Hyper-{K}{\"a}hler metrics and supersymmetry}},
  journal = {\textnormal{Comm. Math. Phys.}},
  volume = {108},
  pages = {535--589},
  month = {Nov},
  year = {1987},
  doi = {10.1007/BF01214418}
}

@article{VC,
  author  = {Cortés, V.},
  title   = {\textit{A holomorphic representation formula for parabolic hyperspheres}},
  journal = {\textnormal{Banach Center Publ.}},
  volume  = {57},
  pages   = {11--16},
  year    = {2002},
  url     = {http://eudml.org/doc/282020}
}

@article{Lewy1,
  author  = {H. Lewy},
  title   = {\textit{A priori limitations for solutions of Monge-Ampère equations I}},
  journal = {\textnormal{Trans. Amer. Math. Soc.}},
  volume  = {37},
  pages   = {417--434},
  year    = {1935}
}

@article{Lewy2,
  author  = {H. Lewy},
  title   = {\textit{A priori limitations for solutions of Monge-Ampère equations II}},
  journal = {\textnormal{Trans. Amer. Math. Soc.}},
  volume  = {41},
  pages   = {365--374},
  year    = {1937},
  doi     = {10.1090/S0002-9904-1936-06397-4}
  }

@article{Freed,
  author  = {Freed, D.S.},
  title   = {\textit{Special Kähler manifolds}},
  journal = {\textnormal{Comm. Math. Phys.}},
  volume  = {203},
  pages   = {31--52},
  year    = {1999},
  doi     = {10.1007/s002200050604},
  url     = {https://doi.org/10.1007/s002200050604}
}

@article{Lu,
  author  = {Z. Lu},
  title   = {\textit{A note on special Kähler manifolds}},
  journal = {\textnormal{Math. Ann.}},
  volume  = {313},
  pages   = {711--713},
  year    = {1999},
  doi     = {10.1007/s002080050278}
}

@article{ACD,
  author  = {Alekseevsky, D.V. and Cortés, V. and Devchand, C.},
  title   = {\textit{Special complex manifolds}},
  journal = {\textnormal{J. Geom. Phys.}},
  volume  = {42},
  pages   = {85--105},
  year    = {2002}
}

@article{BC,
  author  = {Baues, O. and Cortés, V.},
  title   = {\textit{Realisation of special Kähler manifolds as parabolic spheres}},
  journal = {\textnormal{Proc. Amer. Math. Soc.}},
  volume  = {129},
  pages   = {2403--2407},
  year    = {2001},
  doi     = {10.1090/S0002-9939-00-05981-5}
}

@article{Pogo,
  author  = {Pogorelov, A.V.},
  title   = {\textit{On the improper convex affine hyperspheres}},
  journal = {\textnormal{Geom. Ded.}},
  volume  = {1},
  pages   = {33--46},
  year    = {1972},
  doi     = {10.1007/BF00147379}
}

@misc{Bryantexample,
  author = {Bryant, R.L.},
  title = {\textit{Answer to ``Riemannian surfaces with an explicit distance function?''}},
  year = {2011},
  howpublished = {MathOverflow},
  note = {available at \url{https://mathoverflow.net/a/61846}}
}
\end{document}